\newcommand{\R}{\mathbb R}
\newcommand{\N}{\mathbb N}
\newcommand{\Pv}{\mathbb P}
\newcommand{\E}{\mathbb E}
 \renewcommand{\headrulewidth}{0pt}
 \renewcommand{\footrulewidth}{0.5pt}
 \definecolor{myaqua}{rgb}{0.0,0.5,0.55}
 \definecolor{lightaqua}{rgb}{0.75,0.95,0.95}
\newtheorem{theorem}{Theorem}
\newtheorem{prop}{Proposition}
\newtheorem{lem}{Lemma}
\newtheorem{coro}{Corollary}
\newtheorem{defn}{Definition}[section]
\newtheorem{exples}{Examples}[section]
\newtheorem{rem}{Remark}[section]
\def\lin#1#2{\textcolor[rgb]{0.6,0.6,0.6}{\vspace*{#1mm} \hrule
   height 3 pt \vspace*{#2mm}}}
\def\bt{\begin{tabular}}
\def\et{\end{tabular}}
\def\and{\mbox{ and }}
\def\E{\mbox{\bf E}}
\def\P{\mbox{\bf P}}
\def\1{{\bf 1}}
 \def\boxx#1#2#3#4#5{
 {\linethickness{#4pt}\put(#1,#5){\color{myaqua}{\line(1,0){#3}}}}
 \multiput(#1,#2)(0,#4){2}{\line(1,0){#3}}
 \multiput(#1,#2)(#3,0){2}{\line(0,1){#4}}
  }
\begin{document}

 $\mbox{ }$

 \vskip 12mm

{ % \fontfamily{Cambria}\selectfont.

% "Title of the Paper"
{\noindent{\Large\bf\color{myaqua}
   A new class of stochastic processes with great potential for interesting applications.}
%
% \runtitle{Change-Point Analysis of Survival Data}
\\[6mm]
{\bf  Fulgence EYI OBIANG$^{1,a}$, Paule Joyce MBENANGOYE$^{1,b}$, Magloire Yorick NGUEMA MBA$^{1,c}$ and Octave MOUTSINGA$^{1,d}$}}
\\[2mm]
{ % \\ fontfamily{Calibri} \\ selectfont
$^1$URMI Laboratory, Département de Mathématiques et Informatique, Faculté des Sciences, Université des Sciences et Techniques de Masuku, BP: 943 Franceville, Gabon. 
\\
$^a$ Email: \href{mailto:feyiobiang@yahoo.fr}{\color{blue}{\underline{\smash{feyiobiang@yahoo.fr}}}}\\[1mm]
$^b$ Email: \href{mailto:paulejoycembenangoye@yahoo.fr}{\color{blue}{\underline{\smash{paulejoycembenangoye@yahoo.fr}}}}\\[1mm]
$^c$ Email: \href{yorickmagloire@gmail.com}{\color{blue}{\underline{\smash{yorickmagloire@gmail.com}}}}\\[1mm] 
$^d$ Email: \href{mailto:octavemoutsing-pro@yahoo.fr}{\color{blue}{\underline{\smash{octavemoutsing-pro@yahoo.fr}}}}\\[1mm] 
%$^3$ Email:\href{mailto:ouknine@ucam.ac.ma}{\color{blue}{\underline{\smash{ouknine@uca.ac.ma}}}}\\[1mm]
\lin{5}{7}

 {  %\fontfamily{Cambria}\selectfont.
 {\noindent{\large\bf\color{myaqua} Abstract}{\bf \\[3mm]
 \textup{
 This paper contributes to the study of a new and remarkable family of stochastic processes that we will term class $\Sigma^{r}(H)$. This class is potentially interesting because it unifies the study of two known classes: the class $(\Sigma)$ and the class $\mathcal{M}(H)$. In other words, we consider the stochastic processes $X$ which decompose as $X=m+v+A$, where $m$ is a local martingale, $v$ and $A$ are finite variation processes such that $dA$ is carried by $\{t\geq0:X_{t}=0\}$ and the support of $dv$ is $H$, the set of zeros of some continuous martingale $D$. First, we introduce a general framework. Thus, we provide some examples of elements of the new class and present some properties. Second, we provide a series of characterization results. Afterwards, we derive some representation results which permit to recover  a process of the class $\Sigma^{r}(H)$ from its final value and of the honest times  $g=\sup\{t\geq0:X_{t}=0\}$ and $\gamma=\sup{H}$. In final, we investigate an interesting application with processes presently studied. More precisely, we construct solutions for skew Brownian motion equations using stochastic processes of the class $\Sigma^{r}(H)$.
 }}
 \\[4mm]
 {\noindent{\large\bf\color{myaqua} Keywords:}{\bf \\[3mm]
 class $(\Sigma)$; class $(\Sigma^{r})$; Signed measure theory; Honest time; Relative martingales.
}}\\[4mm]{\noindent{\large\bf\color{myaqua} MSC:}{\color{blue} 60G07; 60G20; 60G46; 60G48}}
\lin{3}{1}

\renewcommand{\headrulewidth}{0.5pt}
\renewcommand{\footrulewidth}{0pt}

 \pagestyle{fancy}
 \fancyfoot{}
 \fancyhead{} % clear all header and footer fields
 \fancyhf{}
 \fancyhead[RO]{\leavevmode \put(-160,0){\color{myaqua} Fulgence EYI OBIANG et al. (2022)} \boxx{15}{-10}{10}{50}{15} }
 %\fancyhead[LE]{\leavevmode \put(0,0){\color{myaqua}F. EYI-OBIANG et al (2015)}  \boxx{-45}{-10}{10}{50}{15} }
 \fancyfoot[C]{\leavevmode
 %\put(0,0){\color{lightaqua}\circle*{34}}
 %\put(0,0){\color{myaqua}\circle{34}}
 \put(-2.5,-3){\color{myaqua}\thepage}}

 \renewcommand{\headrule}{\hbox to\headwidth{\color{myaqua}\leaders\hrule height \headrulewidth\hfill}}
\section*{Introduction}

In this paper, we investigate the family of stochastic processes of the form: 
\begin{equation}\label{e}
	X=m+v+A,
\end{equation}
where $m$ is a local martingale, $v$ and $A$ are finite variation processes such that $dv$ is carried by $H$, the set of zeros of a given continuous martingale $D$ and the support of the signed measure $dA$ is the set $\{t\geq0:X_{t}=0\}$. We shall term this family, class $\Sigma^{r}(H)$. This class is potentially interesting because such processes play a central role in several probabilistic studies. Moreover, two important subclasses of this family of stochastic processes are already known and studied in the literature. They are classes $(\Sigma)$ and $\mathcal{M}(H)$. Specifically, the first one is the set of stochastic processes $X$ of the form: $X=m+A$, where $m$ and $A$ satisfy conditions given in Identity \eqref{e}. This notion of class $(\Sigma)$ was first defined by Yor and has been studied extensively by several authors, including Yor, Najnudel, Nikeghbali, Cheridito, Platen, Ouknine, Bouhadou, Sakrani, Eyi Obiang, Moutsinga, and Trutnau  (see \cite{siam,pat,eomt,fjo,naj,naj1,naj2,naj3,nik,mult,y1}). Some well-known examples of such processes include càdlàg local martingales, the absolute value of a continuous martingale, the positive and negative parts of a continuous martingale, solutions of skew Brownian motion equations starting from zero, and the drawdown of a càdlàg local martingale with only negative jumps. In addition, this class is very suitable for applications. For example: Nikeghbali in \cite{nik}, uses them to construct solutions for the Skorokhod's imbedding problem. Additionally, Eyi Obiang et al. in \cite{fjo}, construct weak solutions of certain differential stochastic equations from such processes. 

By contrast, the class $\mathcal{M}(H)$ is the family of stochastic processes $X$ taking the form: $X=m+v$, where $m$ and $v$ are defined as in Identity \eqref{e}. Well-known examples include relative martingales defined by Azema and Yor (class $\mathcal{R}(H)$) \cite{1} and local martingales. In addition, if $D$ is a Brownian motion, an another example is the geometric It\^o-Mckean skew Brownian motion process with Azzalini skew normal distribution 
$X^{\delta}=\sqrt{1-\delta^{2}}B+\delta|D|$, where $B$ is a Brownian motion independent of $D$. This last-mentioned example plays a capital role in many studies.  For instance, Corns and Satchell \cite{corn} and Zhu and He \cite{zhu} worked on this type of skew Brownian motion and priced European style options.
In fact, this class was recently defined and studied in \cite{mh}, where the authors provide a general framework, investigate stochastic differential equations driven by such processes and construct solutions for homogeneous and inhomogeneous skew Brownian motion equations by using processes of the last-mentioned class.

However, remark that the class $\Sigma^{r}(H)$ contains also elements which are not in $(\Sigma)\cup\mathcal{M}(H)$. An example of such processes is $|X^{\delta}|$, the absolute value of the above geometric It\^o-Mckean skew Brownian motion process $X^{\delta}$. There no already exist references studying processes of this  part of the class $\Sigma^{r}(H)$.

The aim of this paper consists in two points. The first one, is to propose a unified study for all stochastic processes of the class $\Sigma^{r}(H)$. Thus, we provide a general framework and methods for dealing with such processes. The second point, is to show that processes of the presently considered class can be useful to develop some applications. Hence, the remaining parts of this work are structured in the following manner: In Section \ref{sec:1}, we define notations and recall some useful preliminaries. In Section \ref{sec:2}, we provide some interesting examples and prove some structural properties satisfied by processes of the class $\Sigma^{r}(H)$. In Section \ref{sec:3}, we derive a series of characterization results for the class $\Sigma^{r}(H)$. In Section \ref{sec:4}, we obtain some formulas which permit to recover a process $X$ of the class $\Sigma^{r}(H)$ from its final value $X_{\infty}$ and of  honest times $\gamma=\sup\{s\leq t:s\in H\}$ and $g=\sup\{t\geq0:X_{t}=0\}$. These formulas are inspired of the one Azema and Yor have obtained for relative martingales (Theorem of \cite{1}) and the one Cheridito et al. have derived for processes of the class $(\Sigma)$ (Theorem of \cite{pat}). More precisely, we shall prove that under some assumptions, processes of the class $\Sigma^{r}(H)$ can be written as follows:
$$X_{t}=\E\left[X_{\infty}1_{\{g<t\}}|\mathcal{F}_{t}\right]+\E\left[(v_{t}-v_{d_{t}})1_{\{\gamma>t\}}|\mathcal{F}_{t}\right]$$
and
$$X_{t}=\E\left[X_{\infty}1_{\{\gamma<t\}}|\mathcal{F}_{t}\right]+\E\left[(A_{t}-A_{d^{'}_{t}})1_{\{g>t\}}|\mathcal{F}_{t}\right],$$
where, $d_{t}=\inf\{s>t:X_{s}=0\}$ and $d^{'}_{t}=\inf\{s>t:s\in H\}$. In fact, we shall obtain corollaries which show that Proposition 2.2 of \cite{1} and Theorem 3.1 of \cite{pat} are particular cases of the above representation results. Finally in Section \ref{sec:5}, we show that processes of the class $\Sigma^{r}(H)$ can also have good applications. For this, we propose to construct solutions for homogeneous and inhomogeneous skew Brownian motion equations. More precisely, we construct solutions from continuous processes of the class $\Sigma^{r}(H)$ for the following equations:
\begin{equation}
	X_{t}=x+B_{t}+(2\alpha-1)L_{t}^{0}(X)
\end{equation}
and
\begin{equation}
	X_{t}=x+B_{t}+\int_{0}^{t}{(2\alpha(s)-1)dL_{s}^{0}(X)},
\end{equation}
where $B$ is a standard Brownian motion and $x=0$. It must be remarked that solutions had already been built from the processes of the class $(\Sigma)$ (see \cite{fjo}). This should not be seen as a redundancy because some processes of the class $\Sigma^{r}(H)$ are not elements of the class $(\Sigma)$. For instance,  $X^{\delta}$ and $|X^{\delta}|$ are such examples.

%%%%%%%%%%%%%%%%%%%%%%%%%%%%%%%%%%%%%%%%%%%%%%%%%%%%%%%%%%%%%%%%%%%%%%%%%%%%%%%%%%%%%%%%%%%%%%%%%%%%%%%%%%%%%%%%%%%%%%%
\section{Recalling of useful preliminaries}\label{sec:1}

%%%%%%%%%%%%%%%%%%%%%%%%%%%%%%%%%%%%%%%%%%%%%%%
\subsection{Notations and some useful definitions}
In this work, we fix a filtered probability space $\left(\Omega,(\mathcal{F}_{t})_{t\geq0},\mathcal{F},\P\right)$ satisfying the usual conditions. Throughout, $H$ denotes the zero's set of a continuous martingale that we shall always term $D$. Thus, we shall use the following notations:
$$\gamma=\sup\{t\geq0:t\in H\}=\sup\{t\geq0:D_{t}=0\}\text{ and }\gamma_{t}=\sup\{s\leq t:s\in H\}=\sup\{s\leq t:D_{s}=0\}.$$
 And for any other process $X$, we shall denote $g=\sup\{t\geq0:X_{t}=0\}$ and $g_{t}=\sup\{s\leq t:X_{s}=0\}$. Remark that $\gamma$ and $g$ are not stopping time with respect to the filtration $(\mathcal{F}_{t})_{t\geq0}$ since they depend on the future. Such random variables are called honest times. And according to the enlargement filtration theory, there exist larger filtrations under which these random variables become stopping times. Thus, we will denote $(\mathcal{G}^{\gamma}_{t})_{t\geq0}$ and $(\mathcal{G}^{g}_{t})_{t\geq0}$ to represent the smaller filtrations under which $\gamma$ and $g$ are  respectively stopping times and such that $\forall t\geq0$, $\mathcal{F}_{t}\subset\mathcal{G}^{\gamma}_{t}$ and $\mathcal{F}_{t}\subset\mathcal{G}^{g}_{t}$.

On another hand, remark that for any continuous semi-martingale $Y$, the set $\mathcal{W}=\{t\geq0; Y_{t}=0\}$ cannot be ordered. However, the set $\R_{+}\setminus\mathcal{W}$ can be decomposed as a countable union $\cup_{n\in\N}{J_{n}}$ of intervals $J_{n}$. Each interval $J_{n}$ corresponds to some excursion of $Y$. In other words, if $J_{n}=]g_{n},d_{n}[$, $Y_{t}\neq0$ for all $t\in]g_{n},d_{n}[$ and $Y_{g_{n}}=Y_{d_{n}}=0$. For any constant $\alpha\in[0,1]$, we consider a sequence $(\zeta_{n})$ of i.i.d. Bernoulli variables such that
$$\Pv(\zeta_{n}=1)=\alpha\text{ and }P(\zeta_{n}=-1)=1-\alpha.$$
Now, let us define some progressive processes which will play a capital role in some parts of the present work (Section \ref{sec:3} and Section \ref{sec:5}).
\begin{equation}\label{zalpha}
	Z^{Y}_{t}=\sum_{n=0}^{+\infty}{\zeta_{n}1_{]g_{n},d_{n}[}(t)}.
\end{equation}
and
\begin{equation}\label{kalpha}
	k^{Y}_{t}=\sum_{n=0}^{+\infty}{\zeta_{n}1_{[g_{n},d_{n}[}(t)}.
\end{equation}

If we assume that $\alpha$ is a piecewise constant function associated with a partition $(0=t_{0}<t_{1}<\cdots<t_{n-1}<t_{m})$, i.e., $\alpha$ is of the form
$$\alpha(t)=\sum_{i=0}^{m}{\alpha_{i}1_{[t_{i},t_{i+1})}(t)},$$
where $\alpha_{i}\in[0,1]$ for all $i=0,1,\cdots,m$, then we shall consider the processes
\begin{equation}\label{Zalpha}
	\mathcal{Z}^{Y}_{t}=\sum_{n=0}^{+\infty}{\sum_{i=0}^{m}{\zeta^{i}_{n}1_{]g_{n},d_{n}[\cap[t-{i},t_{i+1})}(t)}},
\end{equation}
and 
\begin{equation}\label{Kalpha}
	\mathcal{K}^{Y}_{t}=\sum_{n=0}^{+\infty}{\sum_{i=0}^{m}{\zeta^{i}_{n}1_{[g_{n},d_{n}[\cap[t-{i},t_{i+1})}(t)}},
\end{equation}
where $(\zeta^{i}_{n})_{n\geq0}$, $i=1,2,\cdots,m$, are $m$ independent sequences of independent variables such that
$$\Pv(\zeta^{i}_{n}=1)=\alpha_{i}\text{ and }\Pv(\zeta^{i}_{n}=-1)=1-\alpha_{i}.$$

%%%%%%%%%%%%%%%%%%%%%%%%%%%%%%%%%%%%%%%%%%%%%%%%%%%%%%%%%%%%%%%
\subsection{Useful results}
Balayage formulas are important tools in this work. We recall some formulas in next:

\begin{prop}\label{balpro}
Let $Y$ be a continuous semi-martingale and $g_{t}=\sup\{s\leq t:Y_{s}=0\}$. Let $k$ be a bounded progressive process, where ${^{p}k_{\cdot}}$ denotes its predictable projection. Then,
\begin{equation}\label{bal}
	k_{g_{t}}Y_{t}=k_{0}Y_{0}+{\int_{0}^{t}{^{p}k_{g_{s}}dY_{s}}+R_{t}},
\end{equation}
where $R$ is an adapted, continuous process with bounded variations such that $dR_{t}$ is carried by the set $\{Y_{s}=0\}$.
\end{prop}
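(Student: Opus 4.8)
The plan is to establish the balayage formula \eqref{bal} first for a nice class of progressive processes, then pass to the general case by approximation and monotone class arguments. First I would treat the case where $k$ is of the simple form $k_t = F_s \, 1_{]s,\infty[}(t)$ for a fixed $s\ge 0$ and a bounded $\mathcal{F}_s$-measurable random variable $F_s$; more generally, by linearity, any finite sum of such terms. For such $k$, one computes $k_{g_t}$ explicitly: since $g_t$ is the last zero of $Y$ before time $t$, the quantity $k_{g_t}$ jumps around and one must be careful, but the key observation is that $k_{g_t} Y_t = F_s \, 1_{\{g_t > s\}} Y_t$, and $1_{\{g_t>s\}}$ is itself, up to modification on the zero set of $Y$, a process one can handle via the Itô–Tanaka / occupation-time machinery. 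Actually the cleanest route is to recall the classical balayage identity in the form $\mathbf{1}_{]g_t,\infty[}$-type weights and integrate by parts.

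The cleaner approach, and the one I would actually carry out, is this. Write $Z_t = k_{g_t}$. On each excursion interval $]g_n, d_n[$ of $Y$ away from $0$, the random time $g_s$ is constant and equal to $g_n$ for $s \in [g_n, d_n[$, so $Z_s = k_{g_n}$ is constant there; at the endpoints $Z$ may jump. Thus $Z$ is a process of finite variation on each excursion interval (in fact locally constant off the zero set), and any variation of $Z$ is supported on $\overline{\{Y=0\}}$. I would then apply the integration-by-parts formula to the product $Z_t Y_t$: $d(Z_t Y_t) = Z_{t-}\, dY_t + Y_t\, dZ_t + d[Z,Y]_t$. Since $dZ_t$ is carried by $\{Y = 0\}$ (more precisely by the closure of the zero set, where $Y$ vanishes by continuity), the term $\int_0^t Y_s\, dZ_s$ vanishes; and the covariation $[Z,Y]$ vanishes because $Z$ has finite variation while we can show its jumps never coincide with a point where $Y$ is nonzero (at a jump of $Z$, $Y=0$), so $d[Z,Y]_t = \sum \Delta Z_t \Delta Y_t = 0$ for continuous $Y$. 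This leaves $Z_t Y_t = Z_0 Y_0 + \int_0^t Z_{s-}\, dY_s$, and one identifies $Z_{s-} = k_{g_s}$ up to the predictable projection: replacing $k_{g_s}$ inside the stochastic integral against the martingale part of $Y$ by its predictable projection ${}^p k_{g_s}$ is justified because the two differ only on a predictably negligible set (this is the standard fact underlying balayage — see Jeulin, or Yor's ``Sur le balayage des semi-martingales continues''). The finite-variation part of $Y$ is handled pathwise, and any discrepancy there gets absorbed into the remainder process $R$, which is by construction adapted, continuous, of bounded variation, with $dR_t$ carried by $\{Y=0\}$.

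For the general bounded progressive $k$, I would invoke a functional monotone class / monotone convergence argument: the simple processes of the form above generate the progressive $\sigma$-field, both sides of \eqref{bal} are linear in $k$, and one checks stability under bounded monotone limits using dominated convergence for the stochastic integral (the integrands $ {}^p k_{g_s}$ converge, the quadratic variation of $Y$ controls the $L^2$ estimate) and the fact that a bounded-variation-with-support-in-$\{Y=0\}$ process is preserved under such limits. The existence and properties of the predictable projection ${}^p k$ for a bounded progressive $k$ is the standard result of Dellacherie–Meyer, so this is available.

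The main obstacle I anticipate is the careful justification that one may replace $k_{g_s}$ by ${}^p k_{g_s}$ under the stochastic integral sign — i.e., that $\int_0^t (k_{g_s} - {}^p k_{g_s})\, dY_s$ contributes only a bounded-variation process supported on $\{Y=0\}$. The subtlety is that $g_s$ is not a stopping time, so $k_{g_s}$ need not be progressive in a way that lets one integrate it directly; the resolution is that on the open excursion intervals $g_s$ is left-continuous and adapted (it equals $g_n$, a random variable measurable at the start of the excursion), so $k_{g_s}$ agrees with a predictable process off the zero set, and the difference between $k_{g_s}$ and its predictable projection lives on the set $\{Y = 0\} \cup N$ with $N$ predictably negligible. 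Making this rigorous is exactly the content of the balayage theory, so in the write-up I would cite the relevant reference (Yor \cite{y1} or the treatment in Revuz–Yor) for this step and give the integration-by-parts computation in detail.
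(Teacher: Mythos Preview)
The paper does not prove this proposition: it is stated in the preliminaries section as a recalled result (``Balayage formulas are important tools in this work. We recall some formulas in next''), with the implicit reference being Meyer--Stricker--Yor \cite{mey} and Yor \cite{y}. So there is no proof in the paper to compare your proposal against.

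Evaluating your sketch on its own merits, there is a genuine gap in the integration-by-parts route you call ``cleaner.'' You write $Z_t = k_{g_t}$ and want to apply $d(Z_t Y_t) = Z_{t-}\,dY_t + Y_t\,dZ_t + d[Z,Y]_t$, arguing that $Z$ is locally constant on excursion intervals and hence of finite variation with $dZ$ carried by $\{Y=0\}$. But $Z$ need not be of finite variation, nor even a semimartingale: on the zero set itself one has $g_t = t$, so $Z_t = k_t$ there, and $k$ is an arbitrary bounded progressive process with no regularity assumed. For a process like Brownian motion, the zero set is a perfect set of positive Hausdorff dimension, and $k$ restricted to it can oscillate wildly. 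So the product formula is not available, and the argument that $\int Y\,dZ = 0$ and $[Z,Y]=0$ does not get off the ground.

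The standard proof (Meyer--Stricker--Yor) avoids this entirely: one does not try to differentiate $Z$. Instead one shows directly that the process $k_{g_t}$, restricted to the open set $\{Y \neq 0\}$, coincides with a predictable process (since on each excursion interval $g_t$ is frozen at its left endpoint, which is $\mathcal{F}_{g_t+}$-measurable), and then invokes the characterization of predictable projections to replace $k_{g_\cdot}$ by ${}^p(k_{g_\cdot})$ in the stochastic integral against the martingale part of $Y$, the discrepancy being absorbed into a finite-variation remainder supported on $\{Y=0\}$. Your final paragraph in fact gestures at exactly this mechanism; that is the whole proof, not a technical lemma supporting an integration-by-parts computation. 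Drop the $d(ZY)$ approach and make the predictable-projection argument the main line.
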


\begin{rem}
If the progressive process $k$ is càdlàg, we get ${^{p}k_{g_{s}}}={^{p}k_{s}}=k_{s-}$. Hence, according to the continuity of $Y$, \eqref{bal} becomes:
$$k_{g_{t}}Y_{t}=k_{0}Y_{0}+{\int_{0}^{t}{k_{s}dY_{s}}+R_{t}}.$$
\end{rem}

Proposition \ref{balpro} is a powerful and interesting tool. However, the fact that we know nothing about the form of the process $R$ can be limiting. Bouhadou and Ouknine \cite{siam} identified the process $R$ of Proposition \ref{balpro} when the progressive process $k$ is equal to progressive processes $Z^{Y}$ and $\mathcal{Z}^{Y}$ respectively defined in \eqref{zalpha} and \eqref{Zalpha}. We recall these results below.  

\begin{prop}[\textbf{Ouknine and Bouhadou \cite{siam}}]\label{pzalph}
Let $Y$ be a continuous semi-martingale and $Z^{Y}$ be the process defined in \eqref{zalpha}. Then,
$$Z^{Y}_{t}Y_{t}=\int_{0}^{t}{Z^{Y}_{s}dY_{s}}+(2\alpha-1)L_{t}^{0}(Z^{Y}Y),$$
where $L_{\cdot}^{0}(Z^{Y}Y)$ is the local time of the semi-martingale $Z^{Y}Y$.
\end{prop}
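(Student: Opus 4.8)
The plan is to derive this identity directly from the general balayage formula (Proposition \ref{balpro}) applied to the specific progressive process $k = Z^{Y}$ defined in \eqref{zalpha}, and then to identify the unknown bounded-variation process $R$ appearing there with the local time term $(2\alpha-1)L^{0}_{\cdot}(Z^{Y}Y)$. First I would observe that $Z^{Y}$ is a bounded progressive process, so Proposition \ref{balpro} applies: since $Z^{Y}$ vanishes off the (open) excursion intervals, we have $Z^{Y}_{g_{t}} = 0$ for all $t$ — indeed $g_{t}$ is always an endpoint of an excursion interval, hence a zero of $Y$ — so the left-hand side $k_{g_{t}}Y_{t}$ in \eqref{bal} is identically zero, and moreover on the set $\{Y=0\}$ we also have $Z^{Y}_{t}Y_{t}=0$. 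Wait — that is not quite what we want; instead the cleaner route is to apply balayage not to $Z^{Y}$ but to note that $Z^{Y}_{g_{t}}$ is the Bernoulli sign $\zeta_{n}$ of the excursion straddling $t$, which is exactly the value that "propagates" the sign. So I would rewrite $Z^{Y}_{t} = Z^{Y}_{g_{t}}$ on each open excursion interval (they agree there, both equal to $\zeta_{n}$), and apply \eqref{bal} with $k_{s} = Z^{Y}_{s}$, giving
\begin{equation*}
Z^{Y}_{g_{t}}Y_{t} = \int_{0}^{t} {}^{p}(Z^{Y})_{g_{s}}\,dY_{s} + R_{t},
\end{equation*}
where $R$ is continuous, adapted, of bounded variation, with $dR$ carried by $\{Y=0\}$.

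Next, the key point is to compute the predictable projection ${}^{p}(Z^{Y})$ and to simplify $Z^{Y}_{g_{t}}Y_{t}$. Since $Y$ is continuous, $Y_{t}\neq 0$ forces $t$ to lie in an open excursion interval $]g_{n},d_{n}[$, on which $g_{t}=g_{n}$ and $Z^{Y}_{t}=Z^{Y}_{g_{t}}=\zeta_{n}$; and when $Y_{t}=0$ both $Z^{Y}_{t}Y_{t}=0$ and $Z^{Y}_{g_{t}}Y_{t}=0$. Hence $Z^{Y}_{g_{t}}Y_{t} = Z^{Y}_{t}Y_{t}$ identically. For the integral term, one shows (this is the standard fact underlying the balayage formula) that ${}^{p}(Z^{Y})_{g_{s}}$ may be replaced by $Z^{Y}_{s}$ in the stochastic integral $dY_{s}$ because the two differ only on a predictable set on which $Y$ does not move, or more precisely because $Z^{Y}_{s-}=Z^{Y}_{s}$ off the zero set and the integrator is continuous; so $\int_{0}^{t}{}^{p}(Z^{Y})_{g_{s}}\,dY_{s} = \int_{0}^{t}Z^{Y}_{s}\,dY_{s}$. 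This reduces the statement to identifying $R$.

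Finally, I would identify $R_{t}$ with $(2\alpha-1)L^{0}_{t}(Z^{Y}Y)$. Set $N = Z^{Y}Y$; from the two previous paragraphs $N_{t} = \int_{0}^{t}Z^{Y}_{s}\,dY_{s} + R_{t}$, which exhibits $N$ as a continuous semimartingale with martingale part $\int Z^{Y}dY$ (well-defined as $Z^{Y}$ is bounded) and bounded-variation part $R$, the latter supported on $\{Y=0\}=\{N=0\}$. Now apply the Tanaka–Itô formula, or rather the occupation-times/symmetric local time machinery, to $|N|$ and to $N$ near zero: because $N$ changes sign across each zero of $Y$ precisely according to the independent Bernoulli signs $\zeta_{n}$ — with probability $\alpha$ the excursion keeps its natural sign and with probability $1-\alpha$ it is flipped — the contribution of $dR$ to the symmetric local time calculation gives exactly the skew relation $dR_{t} = (2\alpha-1)\,dL^{0}_{t}(N)$. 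Concretely, one compares $L^{0+}_{t}(N)$ and $L^{0-}_{t}(N)$: the downcrossing/upcrossing local times are in ratio governed by $\alpha/(1-\alpha)$, and since $dR$ is carried by $\{N=0\}$ and $R_{t} = \tfrac12(L^{0+}_{t}(N) - L^{0-}_{t}(N))$ (the jump of the derivative of $|x|\mapsto$ sign), one gets $R_{t} = (2\alpha-1)L^{0}_{t}(N)$ with $L^{0}_{t}(N) = \tfrac12(L^{0+}_{t}(N)+L^{0-}_{t}(N))$. The main obstacle is this last identification: making rigorous the claim that the random $\pm$ signs attached to the excursions produce exactly the factor $2\alpha-1$ in front of the local time — this requires a careful excursion-theoretic argument (or an approximation of $L^{0}$ by normalized occupation times of thin tubes $\{|Y|<\varepsilon\}$, splitting the tube into its positive and negative halves and using the law of large numbers over excursions to get the proportions $\alpha$ and $1-\alpha$), and one must check that $R$ is genuinely of bounded variation and continuous, which is inherited from Proposition \ref{balpro}. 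Everything else is bookkeeping with the balayage formula.
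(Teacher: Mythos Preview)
The paper does not prove this proposition; it is quoted from Bouhadou and Ouknine \cite{siam} without proof, so there is no in-paper argument to compare against. I can only assess your proposal on its own merits.

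There is a genuine error in the balayage step. Your first instinct was right and you should not have overridden it: since $Z^{Y}$ is built from the \emph{open} intervals $]g_{n},d_{n}[$ in \eqref{zalpha}, and $g_{t}=g_{n}$ is a left endpoint whenever $t\in\,]g_{n},d_{n}[$, one has $Z^{Y}_{g_{t}}=0$ identically, not $\zeta_{n}$. Your later claim ``$Z^{Y}_{t}=Z^{Y}_{g_{t}}=\zeta_{n}$'' flatly contradicts this and is false. Consequently, plugging $k=Z^{Y}$ into Proposition~\ref{balpro} gives the trivial identity $0=0+R_{t}$, not the decomposition you need. The fix is to apply balayage with the companion process $k^{Y}$ of \eqref{kalpha}, defined on the half-open intervals $[g_{n},d_{n}[$: then $k^{Y}_{g_{t}}=\zeta_{n}$ for $t\in[g_{n},d_{n}[$, and since $Y_{g_{n}}=0$ one does get $k^{Y}_{g_{t}}Y_{t}=Z^{Y}_{t}Y_{t}$. (This is exactly the switch the paper performs later, in Section~\ref{sec:5}, when it writes $Z_{1,t}Y_{t}=k_{1,g_{t}}Y_{t}$.) Matching the stochastic integral to $\int_{0}^{t}Z^{Y}_{s}\,dY_{s}$ then still requires an argument about the predictable projection of $k^{Y}_{g_{\cdot}}$, which you have not supplied.

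Regarding the identification $R_{t}=(2\alpha-1)L^{0}_{t}(Z^{Y}Y)$: your sketch is only a heuristic. The assertion that the one-sided local times of $N=Z^{Y}Y$ are in ratio $\alpha/(1-\alpha)$ is precisely the nontrivial content of the result, and appealing to a ``law of large numbers over excursions'' without specifying a limiting procedure, or invoking $R_{t}=\tfrac{1}{2}(L^{0+}_{t}(N)-L^{0-}_{t}(N))$ without deriving it for this $N$, does not constitute a proof. You correctly flag this as the main obstacle, but the proposal does not resolve it.
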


\begin{prop}[\textbf{Ouknine and Bouhadou \cite{siam}}]
Let $Y$ be a continuous semi-martingale and $Z^{Y}$ be the process defined in \eqref{Zalpha}. Then,
$$\mathcal{Z}^{Y}_{t}Y_{t}=\int_{0}^{t}{\mathcal{Z}^{Y}_{s}dY_{s}}+\int_{0}^{t}{(2\alpha(s)-1)dL_{s}^{0}(\mathcal{Z}^{Y}Y)},$$
where $L_{\cdot}^{0}(\mathcal{Z}^{Y}Y)$ is the local time of the semi-martingale $\mathcal{Z}^{Y}Y$.
\end{prop}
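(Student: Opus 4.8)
The plan is to reduce the statement to the one-parameter balayage identity of Proposition~\ref{pzalph} by localising on the cells of the subdivision $0=t_{0}<t_{1}<\cdots<t_{m}$ (set $t_{m+1}=+\infty$). For each $i\in\{0,\dots,m\}$, introduce the one-parameter ``sign of the excursion'' process attached to the Bernoulli sequence $(\zeta^{i}_{n})_{n\geq0}$ of parameter $\alpha_{i}$,
$$Z^{(i)}_{t}=\sum_{n\geq0}\zeta^{i}_{n}\,1_{]g_{n},d_{n}[}(t),$$
which is exactly the process $Z^{Y}$ of \eqref{zalpha} built with $\alpha_{i}$. From the definition \eqref{Zalpha} one reads off that $\mathcal{Z}^{Y}_{t}=Z^{(i)}_{t}$ for every $t\in[t_{i},t_{i+1})$, so the paths of $\mathcal{Z}^{Y}Y$ and of $Z^{(i)}Y$ coincide on that cell; in particular $\mathcal{Z}^{Y}Y$ is a semimartingale, continuous on each cell and possibly jumping only at the finitely many instants $t_{1},\dots,t_{m}$. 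Proposition~\ref{pzalph}, applied to $Y$ with the sequence $(\zeta^{i}_{n})_{n}$, gives for each $i$
$$Z^{(i)}_{t}Y_{t}=\int_{0}^{t}Z^{(i)}_{s}\,dY_{s}+(2\alpha_{i}-1)\,L_{t}^{0}(Z^{(i)}Y).$$

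Next I would transport these identities cellwise onto $\mathcal{Z}^{Y}Y$. On $[t_{i},t_{i+1})$ the increments of $\int_{0}^{\cdot}\mathcal{Z}^{Y}_{s}\,dY_{s}$ coincide with those of $\int_{0}^{\cdot}Z^{(i)}_{s}\,dY_{s}$ (the integrands agree there, and changing an integrand at a single point is harmless since $Y$ is continuous), while $\mathcal{Z}^{Y}Y$ and $Z^{(i)}Y$, having the same path on the cell, have local times at $0$ with the same increments there. Since $\alpha(\cdot)\equiv\alpha_{i}$ on $[t_{i},t_{i+1})$ and $\{t:\mathcal{Z}^{Y}_{t}Y_{t}=0\}=\{t:Y_{t}=0\}$, adding the cellwise identities and stitching at the subdivision points should yield
$$\mathcal{Z}^{Y}_{t}Y_{t}=\int_{0}^{t}\mathcal{Z}^{Y}_{s}\,dY_{s}+\int_{0}^{t}(2\alpha(s)-1)\,dL_{s}^{0}(\mathcal{Z}^{Y}Y),$$
which is the claim.

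The delicate point --- the step I expect to be the main obstacle --- is precisely this stitching at the instants $t_{1},\dots,t_{m}$: there $\mathcal{Z}^{Y}Y$ carries the jump $(\zeta^{i}_{n}-\zeta^{i-1}_{n})Y_{t_{i}}$ whenever $t_{i}$ falls inside an excursion of $Y$ (and no jump when $Y_{t_{i}}=0$), and one has to check that the right-hand side above carries exactly the same jump, which forces one to be careful about the precise meaning of the local time $L^{0}(\mathcal{Z}^{Y}Y)$ of the possibly-discontinuous semimartingale $\mathcal{Z}^{Y}Y$. Once the behaviour at those finitely many points is pinned down, the remainder is the routine bookkeeping sketched above, the genuine content being supplied entirely by Proposition~\ref{pzalph}.
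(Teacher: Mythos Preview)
The paper does not prove this proposition at all: it is quoted verbatim from Bouhadou and Ouknine \cite{siam} (together with Proposition~\ref{pzalph}) as a preliminary tool, and is then used as a black box in Section~\ref{sec:5}. There is therefore no ``paper's own proof'' to compare your attempt against.

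That said, your reduction strategy is the natural one and is essentially how the result is obtained in \cite{siam}: localise on each cell $[t_i,t_{i+1})$, where $\mathcal{Z}^{Y}$ coincides with the one-parameter process $Z^{(i)}$ built from $(\zeta^{i}_n)_n$, invoke Proposition~\ref{pzalph} on each cell, and patch. You have also correctly isolated the only nontrivial issue, namely the possible jumps of $\mathcal{Z}^{Y}Y$ at the subdivision points $t_1,\dots,t_m$ when these fall inside an excursion of $Y$. Note that since $Y$ is continuous, $\int_0^{\cdot}\mathcal{Z}^{Y}_s\,dY_s$ is continuous, and the (Meyer--Tanaka) local time of a c\`adl\`ag semimartingale is continuous in $t$; so the right-hand side, as you have written it, is continuous while the left-hand side may jump. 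This is not a flaw in your outline but a genuine point that the statement itself (as recalled here) leaves implicit: the identity must be read with the convention used in \cite{siam} for the local time of the piecewise-defined process, and the jump contributions at the $t_i$ have to be absorbed consistently on both sides. Resolving this is exactly the ``pinning down'' you describe, and once done the rest is indeed bookkeeping.
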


%%%%%%%%%%%%%%%%%%%%%%%%%%%%%%%%%%%%%%%%%%%%%%%%%%%%%%%%%%%%%%%%%%%%%%%%%%%%%%%%%%%%%%%%%%%%%%%%%%%%%%%%%%%%%%%%%%%%%%%
\section{Preliminary study of the new class}\label{sec:2}

Now, we start the study of processes of the class $\Sigma^{r}(H)$. The goal of this section is to provide a general framework for stochastic processes of this class. Thus, we define correctly the class $\Sigma^{r}(H)$. Afterwards, we give some examples which are not necessarily in the known classes $(\Sigma)$ and $\mathcal{M}(H)$. In final, we will end this section by exploring some general properties satisfied by elements of the class $\Sigma^{r}(H)$.

%%%%%%%%%%%%%%%%%%%%%%%%%%%%%%%%%%%%%%%%%%%%%%%%%%%%%%%%%%%%%%%%%
\subsection{Definition and examples}

We start this subsection by giving properly the definition of the class $\Sigma^{r}(H)$. Thus, we shall consider the following definition:
\begin{defn}\label{d1}
We say that a process $X$ is of the class $\Sigma^{r}(H)$ if it decomposes as $X=m+v+A$, where
\begin{enumerate}
	\item $m$ is a càdlàg local martingale, with $m_{0}=0$ ;
	\item $v$ and $A$ are adapted continuous finite variation processes such that $v_{0}=0$ and $A_{0}=0$; 
	\item $\int{1_{H^{c}}dv_{s}}=0$ and $\int{1_{\{X_{s}\neq0\}}dA_{s}}=0$.
\end{enumerate}
\end{defn}
 
Now, we shall show that processes of the class $\Sigma^{r}(H)$ decompose otherwise. To do this, we first define an other class that we term class $\Sigma_{r}(H)$.

\begin{defn}\label{d2}
We say that a process $X$ is of the class $\Sigma_{r}(H)$ if it decomposes as $X=M+A$, where
\begin{enumerate}
	\item $M\in\mathcal{M}(H)$, with $M_{0}=0$ ;
	\item $A$ is an adapted continuous process with finite variations such that $A_{0}=0$; 
	\item $\int{1_{H^{c}\cap\{X_{s}\neq0\}}dA_{s}}=0$.
\end{enumerate}
\end{defn}

The following proposition allows us to see that $\Sigma_{r}(H)$ and $\Sigma^{r}(H)$ are identical.
\begin{prop}\label{prop4}
The following are equivalent:
\begin{enumerate}
	\item $X\in\Sigma^{r}(H)$;
	\item $X\in\Sigma_{r}(H)$;
\end{enumerate}
\end{prop}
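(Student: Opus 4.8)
The plan is to prove the two implications separately, and in each direction the task is to re-bundle the finite-variation pieces so that the defining supports match. For the implication $(1)\Rightarrow(2)$, suppose $X=m+v+A$ with $m,v,A$ as in Definition \ref{d1}. The natural candidate is to set $M:=m+v$ and keep $A$ as it is. Since $m$ is a c\`adl\`ag local martingale with $m_0=0$ and $v$ is an adapted continuous finite variation process with $v_0=0$ and $dv$ carried by $H$, the process $M=m+v$ is exactly of the form required for membership in $\mathcal{M}(H)$, with $M_0=0$. It remains to check condition 3 of Definition \ref{d2}, namely $\int 1_{H^c\cap\{X_s\neq 0\}}\,dA_s=0$. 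But this is immediate from $\int 1_{\{X_s\neq 0\}}\,dA_s=0$, since $1_{H^c\cap\{X_s\neq 0\}}\le 1_{\{X_s\neq 0\}}$ pointwise and $dA$ has finite variation; hence the integral of the smaller indicator against $|dA|$ vanishes as well. So $X\in\Sigma_r(H)$.

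For the converse $(2)\Rightarrow(1)$, suppose $X=M+A$ with $M\in\mathcal{M}(H)$ and $A$ as in Definition \ref{d2}. Write $M=m+v$ with $m$ a c\`adl\`ag local martingale and $v$ a continuous finite variation process supported on $H$ (with $m_0=v_0=0$), using the definition of the class $\mathcal{M}(H)$ recalled in the introduction. The issue is that $A$ satisfies only $\int 1_{H^c\cap\{X_s\neq 0\}}\,dA_s=0$, which is weaker than the requirement $\int 1_{\{X_s\neq 0\}}\,dA_s=0$ needed in Definition \ref{d1}: the measure $dA$ may still charge the set $H\cap\{X\neq 0\}$. The fix is to split $A$ along $H$: write $A=A^{(1)}+A^{(2)}$ where $A^{(1)}_t=\int_0^t 1_H(s)\,dA_s$ and $A^{(2)}_t=\int_0^t 1_{H^c}(s)\,dA_s$. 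Then $dA^{(2)}$ is carried by $H^c$, and on $H^c$ the hypothesis forces $dA^{(2)}$ to be carried by $\{X=0\}$ as well; hence $A^{(2)}$ plays the role of the ``$A$'' in Definition \ref{d1}. Meanwhile $dA^{(1)}$ is carried by $H$, so $A^{(1)}$ can be absorbed into the $v$-part: set $v':=v+A^{(1)}$, which is still continuous, adapted, of finite variation, vanishing at $0$, and with $dv'$ carried by $H$. Then $X=m+v'+A^{(2)}$ is a decomposition of the type in Definition \ref{d1}, so $X\in\Sigma^r(H)$.

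The only genuine subtlety — and the step I would be most careful about — is the measurability and adaptedness of the pieces $A^{(1)}$ and $A^{(2)}$: one needs $H=\{t:D_t=0\}$ to be a (progressively measurable, indeed optional closed) set so that $1_H(s)\,dA_s$ defines a legitimate adapted continuous finite variation process; continuity of $A$ together with continuity of $D$ makes this routine, since $1_H\,dA$ is absolutely continuous with respect to $|dA|$ and the resulting process inherits continuity. One should also double-check that continuity of $v'$ is preserved, which holds because $A$ (hence $A^{(1)}$) is continuous by assumption. Everything else is a matter of comparing indicator functions against a finite variation measure, which requires no computation beyond the monotonicity $1_{E\cap F}\le 1_F$.
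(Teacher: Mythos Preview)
Your proof is correct and follows essentially the same approach as the paper: both directions amount to re-bundling the finite-variation pieces so that the support conditions line up. The only cosmetic difference is the choice of splitting set in $(2)\Rightarrow(1)$: you decompose $A$ along $H$ versus $H^c$, whereas the paper decomposes the finite-variation part along $\{X=0\}$ versus $\{X\neq 0\}$; by the symmetry of the condition $\int 1_{H^c\cap\{X_s\neq 0\}}\,dA_s=0$, either split works identically.
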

\begin{proof}
Let $X=m+v+A$ be a process of the class $\Sigma_{r}(H)$. That is, $m$ is a local martingale, $v$ and $A$ are processes with finite variations such that $dv$ and $dA$ are respectively carried by $H$ and $\{t\geq0:X_{t}=0\}$. Thus, let $\Gamma=v+A$. We have $\forall t\geq0$,
$$\int{1_{H^{c}}(s)1_{\{X_{s}\neq0\}}d\Gamma_{s}}=\int{1_{\{X_{s}\neq0\}}1_{H^{c}}(s)dv_{s}}+\int{1_{H^{c}}(s)1_{\{X_{s}\neq0\}}dA_{s}}.$$
But, $1_{H^{c}}(s)dv_{s}=0$ and $1_{\{X_{s}\neq0\}}dA_{s}=0$ since $dv$ and $dA$ are respectively carried by $H$ and $\{t\geq0:X_{t}=0\}$. Hence,
$$\int{1_{H^{c}}(s)1_{\{X_{s}\neq0\}}d\Gamma_{s}}=0.$$
Then, $X\in\Sigma_{r}(H)$.

On another hand, consider a process $X=M+\Gamma$ of the class $\Sigma_{r}(H)$. That is, $M\in\mathcal{M}(H)$ and $\Gamma$ is a finite variation process such that
$$\int{1_{H^{c}\cap\{X_{s}\neq0\}}d\Gamma_{s}}=0.$$
Firstly, $M$ decomposes as $M=m+v^{'}$ where $m$ is a local martingale and $v^{'}$ is a finite variation process such $dv^{'}$ is carried by $H$. In addition, we have $\forall t\geq0$,
$$\Gamma_{t}=v^{''}_{t}+A_{t},$$ 
where 
$$v^{''}_{t}=\int_{0}^{t}{1_{\{X_{s}\neq0\}}d\Gamma_{s}}\text{ and }A_{t}=\int_{0}^{t}{1_{\{X_{s}=0\}}d\Gamma_{s}}.$$
However, we have:
$$\int_{0}^{t}{1_{H^{c}}(s)dv^{''}_{s}}=\int_{0}^{t}{1_{H^{c}}(s)1_{\{X_{s}\neq0\}}d\Gamma_{s}}=0$$
and
$$\int_{0}^{t}{1_{\{X_{s}\neq0\}}dA_{s}}=\int_{0}^{t}{1_{\{X_{s}\neq0\}}1_{\{X_{s}=0\}}d\Gamma_{s}}=0.$$
Then, $dv^{''}$ and $dA$ are respectively carried by $H$ and $\{t\geq0:X_{t}=0\}$. Which proves that $X\in\Sigma^{r}(H)$. This completes the proof.
\end{proof}

We recall that the class $\Sigma^{r}(H)$ contains classes $(\Sigma)$ and $\mathcal{M}(H)$. In the following, we provide some processes of the class $\Sigma^{r}(H)$ which are not in both last  mentioned classes. We start with examples inspired by those known on the class $(\Sigma)$.
\begin{exples}\label{ex3} 
For any continuous process $M$ of the class $\mathcal{M}(H)$ with $M_{0}=0$, the following hold:
 \begin{itemize}
	 \item $X=|M|=\int_{0}^{\cdot}{sign(M_{s})dM_{s}}+L_{\cdot}^{0}(M)\in\Sigma^{r}(H)$;
	 \item $\forall \alpha,\beta\in[0,1]$, $Y=\alpha M^{+}+\beta M^{-}\in\Sigma^{r}(H)$;
 \end{itemize}
\end{exples}

 In next, we provide other interesting examples.

\begin{exples}\label{ex4} 
If $D$ is a continuous martingale such that $H=\{t\geq0:D_{t}=0\}$. Hence, 
 \begin{itemize}
	 \item For any continuous local martingale $m$ null at zero, $Z=|\max{\{m-D,m+D\}}-|D_{0}||\in\Sigma^{r}(H)$;
	 \item If $D_{0}=0$, hence $\forall \alpha,\beta\in\R$, $Y=|\alpha m+\beta |D||\in\Sigma^{r}(H)$;
	 \item Let $M$ be a continuous process of $\mathcal{M}(H)$ such that $M_{0}=0$ and $k$ be a bonded progressive process $k$. Define $g_{t}=\sup\{s\leq t:M_{s}=0\}$. The process $T$ defined by $T_{t}=k_{g_{t}}M_{t}$ is in $\Sigma^{r}(H)$.
 \end{itemize}
\end{exples}

%%%%%%%%%%%%%%%%%%%%%%%%%%%%%%%%%%%%%%%%%%%%%%%%%%%%%%%%%%%%%%%%%%%%%%%%%%%%%%%%%%%%%%%%%%%%%%%%%%%%%%%%%%%
%\subsection{Some general properties}
\subsection{Some general properties}
	Now, we shall explore some general properties satisfied by processes of the class $\Sigma^{r}(H)$. Therefore, we start by inferring some properties resulting from integration by parts.

\begin{lem}\label{l1}
Let $X=M+A$ and $Y=W+\Gamma$ be two processes of the class $\Sigma^{r}(H)$ with the decomposition given in Definition \ref{d2}. The process 
$$\left(X_{t}Y_{t}-[X,Y]_{t}-\int_{0}^{t}{X_{s}d\Gamma_{s}}-\int_{0}^{t}{Y_{s}dA_{s}}:t\geq0\right)$$
is an element of the class $\mathcal{M}(H)$.
\end{lem}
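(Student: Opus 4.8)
The plan is to compute the product $X_tY_t$ by integration by parts and then identify which terms are local martingales, which have $dv$-type support in $H$, and which cancel against the two integral corrections. Writing $X=M+A$ and $Y=W+\Gamma$ with $M,W\in\mathcal{M}(H)$, integration by parts gives
$$X_tY_t=\int_0^t X_{s-}\,dY_s+\int_0^t Y_{s-}\,dX_s+[X,Y]_t.$$
Since $A$ and $\Gamma$ are continuous and of finite variation, they contribute nothing to the quadratic covariation, so $[X,Y]=[M,W]$; moreover $dX=dM+dA$ and $dY=dW+d\Gamma$, and the finite-variation integrators are continuous, so the $X_{s-}$, $Y_{s-}$ in front of $d\Gamma$, $dA$ may be replaced by $X_s$, $Y_s$. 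Rearranging, the process in the statement equals
$$\int_0^t X_{s-}\,dW_s+\int_0^t Y_{s-}\,dM_s.$$
So everything reduces to showing that $\int_0^\cdot X_{s-}\,dW_s+\int_0^\cdot Y_{s-}\,dM_s$ belongs to $\mathcal{M}(H)$, i.e. decomposes as local martingale plus a finite-variation process carried by $H$.

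The next step is to split $M=m+v$ and $W=w+u$ with $m,w$ local martingales and $v,u$ continuous finite-variation processes supported on $H$ (this is the defining decomposition of $\mathcal{M}(H)$ from Definition~\ref{d2}, item~1, via the class $\mathcal{M}(H)$). Then
$$\int_0^t X_{s-}\,dW_s+\int_0^t Y_{s-}\,dM_s=\left(\int_0^t X_{s-}\,dw_s+\int_0^t Y_{s-}\,dm_s\right)+\left(\int_0^t X_{s-}\,du_s+\int_0^t Y_{s-}\,dv_s\right).$$
The first bracket is a stochastic integral of locally bounded (more precisely, locally bounded by localization along the stopping times that reduce $m,w$ and bound $X,Y$) predictable integrands against local martingales, hence a local martingale. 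The second bracket is a continuous finite-variation process, and since $du$ is carried by $H$ and $dv$ is carried by $H$, the measure $X_{s-}\,du_s+Y_{s-}\,dv_s$ is also carried by $H$; that is, $\int 1_{H^c}\,d\big(\int X_{s-}\,du_s+\int Y_{s-}\,dv_s\big)=0$. Therefore the whole expression is of the form (local martingale) $+$ (finite-variation process carried by $H$), which is exactly membership in $\mathcal{M}(H)$, and the lemma follows.

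The main subtlety to be careful about is the integrability/localization issue: the integrands $X_{s-}$ and $Y_{s-}$ need not be bounded, so to conclude that $\int X_{s-}\,dw_s$ and $\int Y_{s-}\,dm_s$ are genuine local martingales (not merely local in some weaker sense) one localizes by a sequence of stopping times $T_n\uparrow\infty$ that simultaneously reduce $m$ and $w$ to martingales and keep $\sup_{s\le T_n}(|X_{s-}|+|Y_{s-}|)$ bounded — this is possible because $X$ and $Y$, being càdlàg, have paths that are bounded on compacts. A second, more cosmetic point is justifying the replacement of left limits by values in the finite-variation integrals: since $A$, $\Gamma$, $v$, $u$ are all continuous, $\int X_{s-}\,dA_s=\int X_s\,dA_s$ etc., so no jump terms are lost, and one should remark that the jumps of $X$ and $M$ coincide (as $A$ is continuous) so that $[X,Y]=[M,W]$ is legitimate. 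Apart from these bookkeeping matters the argument is a direct integration-by-parts computation followed by sorting terms by the $H$-support condition.
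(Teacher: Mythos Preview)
Your proof is correct and follows essentially the same route as the paper: integration by parts, replacement of left limits by values in the continuous finite-variation integrals, and identification of the remainder as $\int_0^\cdot X_{s-}\,dW_s+\int_0^\cdot Y_{s-}\,dM_s$. You are actually more explicit than the paper, which simply asserts that these two integrals lie in $\mathcal{M}(H)$; your further splitting $M=m+v$, $W=w+u$ and the localization discussion supply exactly the justification the paper defers to a later remark.
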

\begin{proof}
Through integration by parts, we obtain:
$$X_{t}Y_{t}-[X,Y]_{t}=\int_{0}^{t}{X_{s-}dY_{s}}+\int_{0}^{t}{Y_{s-}dX_{s}}.$$
Which implies that
$$X_{t}Y_{t}-[X,Y]_{t}-\int_{0}^{t}{X_{s-}d\Gamma_{s}}-\int_{0}^{t}{Y_{s-}dA_{s}}=\int_{0}^{t}{X_{s-}dW_{s}}+\int_{0}^{t}{Y_{s-}dM_{s}}.$$
But, we have from the continuity of $A$ and $\Gamma$ that
$$\int_{0}^{t}{X_{s-}d\Gamma_{s}}=\int_{0}^{t}{X_{s}d\Gamma_{s}}\text{ and }\int_{0}^{t}{Y_{s-}dA_{s}}=\int_{0}^{t}{Y_{s}dA_{s}}.$$
In addition, remark that $\int_{0}^{\cdot}{X_{s-}dW_{s}}$ and $\int_{0}^{\cdot}{Y_{s-}dM_{s}}$ are processes of the class $\mathcal{M}(H)$. Hence, it entails that
$$\left(X_{t}Y_{t}-[X,Y]_{t}-\int_{0}^{t}{X_{s}d\Gamma_{s}}-\int_{0}^{t}{Y_{s}dA_{s}}:t\geq0\right)\in\mathcal{M}(H).$$
\end{proof}

In next, we derive a series of corollaries of Lemma \ref{l1}. Thus, we begin by those establishing conditions under which $\left(X_{t}Y_{t}-[X,Y]_{t}-\int_{0}^{t}{X_{s}d\Gamma_{s}}-\int_{0}^{t}{Y_{s}dA_{s}}:t\geq0\right)$ is a local martingale.

\begin{coro}\label{c11}
If $X=M+A$ and $Y=W+\Gamma$ are two processes of the class $\Sigma^{r}(H)$ null on $H$. Hence, the process
$$\left(X_{t}Y_{t}-[X,Y]_{t}-\int_{0}^{t}{X_{s}d\Gamma_{s}}-\int_{0}^{t}{Y_{s}dA_{s}}:t\geq0\right)$$
is a local martingale.
\end{coro}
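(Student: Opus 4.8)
The plan is to derive Corollary \ref{c11} directly from Lemma \ref{l1} by showing that the extra finite-variation term which distinguishes a general element of $\mathcal{M}(H)$ from a local martingale vanishes under the hypothesis that $X$ and $Y$ are null on $H$. By Lemma \ref{l1}, the process
$$N_{t} := X_{t}Y_{t}-[X,Y]_{t}-\int_{0}^{t}{X_{s}d\Gamma_{s}}-\int_{0}^{t}{Y_{s}dA_{s}}$$
belongs to $\mathcal{M}(H)$, so by definition it decomposes as $N=n+w$ with $n$ a local martingale, $n_0=0$, and $w$ a continuous finite-variation process with $dw$ carried by $H$. The goal is to prove $w\equiv 0$, which then gives $N=n$, a local martingale.

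First I would track the provenance of $w$ through the proof of Lemma \ref{l1}. There the $\mathcal{M}(H)$ membership came from writing $N_t=\int_0^t X_{s-}\,dW_s+\int_0^t Y_{s-}\,dM_s$, where $X=M+A$, $Y=W+\Gamma$ are the decompositions of Definition \ref{d2}, so $M,W\in\mathcal{M}(H)$. Expanding $M=m+v$ and $W=w'+v'$ (local martingale plus finite-variation part carried by $H$), the finite-variation component of $N$ is exactly $w_t=\int_0^t X_{s-}\,dv'_s+\int_0^t Y_{s-}\,dv_s=\int_0^t X_s\,dv'_s+\int_0^t Y_s\,dv_s$ (using continuity of $v,v'$). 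Now $dv'$ and $dv$ are carried by $H$; and by hypothesis $X$ and $Y$ vanish on $H$, i.e. $X_s=Y_s=0$ for $s\in H$. Hence $X_s\,dv'_s=0$ and $Y_s\,dv_s=0$, so $w\equiv 0$ and $N$ is a local martingale.

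A cleaner alternative, which I would actually write, avoids reopening the proof of Lemma \ref{l1}: instead apply Lemma \ref{l1} and separately argue that \emph{any} element of $\mathcal{M}(H)$ which is a continuous semimartingale vanishing on $H$ in the appropriate sense has trivial finite-variation part — but this needs care since $N$ itself need not vanish on $H$. So the better route is the direct one above. The only genuinely delicate point is the interchange "$dv$ carried by $H$ and $Y$ null on $H$ imply $\int Y\,dv=0$": this requires that $Y$ be (at least) progressively measurable and that $\{Y=0\}\supseteq H$ up to the relevant null sets, which holds since $Y$ is a continuous adapted process null on $H$ and $dv=1_H\,dv$. I expect this measure-theoretic bookkeeping — making precise that the product of a process vanishing on $H$ with a measure supported on $H$ is the zero measure — to be the main (mild) obstacle; everything else is a direct citation of Lemma \ref{l1} and Definition \ref{d2}.
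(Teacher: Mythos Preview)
Your proposal is correct and follows essentially the same route as the paper's own proof: invoke Lemma \ref{l1} to write $N_t=\int_0^t X_{s-}\,dW_s+\int_0^t Y_{s-}\,dM_s$, expand $M=m+v$ and $W=m'+v'$, use continuity of $v,v'$ to replace $X_{s-},Y_{s-}$ by $X_s,Y_s$ in the finite-variation integrals, and then kill $\int X_s\,dv'_s$ and $\int Y_s\,dv_s$ using the hypothesis that $X,Y$ vanish on $H$ together with the fact that $dv,dv'$ are carried by $H$. One small slip: you call $Y$ a \emph{continuous} adapted process in your bookkeeping remark, but in this class $Y$ is only c\`adl\`ag in general; this does not matter for the argument, since the passage from $Y_{s-}$ to $Y_s$ relies on the continuity of $v$, not of $Y$.
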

\begin{proof}
Through Lemma \ref{l1}, $\left(X_{t}Y_{t}-[X,Y]_{t}-\int_{0}^{t}{X_{s}d\Gamma_{s}}-\int_{0}^{t}{Y_{s}dA_{s}}:t\geq0\right)$
is an element of the class $\mathcal{M}(H)$. Furthermore, we have:
$$X_{t}Y_{t}-[X,Y]_{t}-\int_{0}^{t}{X_{s}d\Gamma_{s}}-\int_{0}^{t}{Y_{s}dA_{s}}=\int_{0}^{t}{X_{s-}dW_{s}}+\int_{0}^{t}{Y_{s-}dM_{s}}.$$
But, $M$ and $W$ decompose as $M=m+v$ and $W=m^{'}+v^{'}$, where $m$ and $m^{'}$ are càdlàg local martingales and $v$ and $v^{'}$ are continuous processes with finite variations such that $dv$ and $dv^{'}$ are carried by $H$. Hence, we obtain that $\forall t\geq0$,
$$\int_{0}^{t}{Y_{s-}dM_{s}}=\int_{0}^{t}{Y_{s-}dm_{s}}+\int_{0}^{t}{Y_{s-}dv_{s}}\text{ and }\int_{0}^{t}{X_{s-}dW_{s}}=\int_{0}^{t}{X_{s-}dm^{'}_{s}}+\int_{0}^{t}{X_{s-}dv^{'}_{s}}.$$
On another hand, $v$ and $v^{'}$ are continuous. Hence,
$$\int_{0}^{t}{Y_{s-}dv_{s}}=\int_{0}^{t}{Y_{s}dv_{s}}\text{ and }\int_{0}^{t}{X_{s-}dv^{'}_{s}}=\int_{0}^{t}{X_{s}dv^{'}_{s}}.$$
Therefore,
$$\int_{0}^{t}{Y_{s-}dv_{s}}=0\text{ and }\int_{0}^{t}{X_{s-}dv^{'}_{s}}=0$$
since $Y$ and $X$ vanish on $H$ and $dv$ and $dv^{'}$ are carried by $H$. This completes the proof.
\end{proof}

\begin{rem}\label{R2.1}
We retain according the above proof that for any predictable process $h$ and a process $W\in\mathcal{M}(H)$, the processes $\int_{0}^{\cdot}{h_{s-}dW_{s}}$ and  $\int_{0}^{\cdot}{h_{s}dW_{s}}$ are processes of the class $\mathcal{M}(H)$. And they are local martingales when $h$ vanishes on $H$.
\end{rem}

\begin{coro}\label{c12}
If $X=m+A$ is a process of the class $(\Sigma)$ null on $H$. Hence, for every element $Y=W+\Gamma$ of the class $\Sigma^{r}(H)$, the process
$$\left(X_{t}Y_{t}-[X,Y]_{t}-\int_{0}^{t}{X_{s}d\Gamma_{s}}-\int_{0}^{t}{Y_{s}dA_{s}}:t\geq0\right)$$
is a local martingale.
\end{coro}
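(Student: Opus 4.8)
The plan is to reduce Corollary \ref{c12} to Corollary \ref{c11} by showing that a process $X$ of the class $(\Sigma)$ that vanishes on $H$ is automatically a process of the class $\Sigma^{r}(H)$ that vanishes on $H$, with the decomposition of Definition \ref{d2} given by $X = M + A$ where $M = m$ (so $W$-part is the local martingale $m$ itself, with trivial finite-variation component $v = 0$, which indeed lies in $\mathcal{M}(H)$) and $A$ is the carried-by-zeros part already present. Once this identification is made, the two processes $X$ and $Y$ are both of the class $\Sigma^{r}(H)$, and $X$ vanishes on $H$ by hypothesis; the only thing missing relative to the hypotheses of Corollary \ref{c11} is that $Y$ need not vanish on $H$. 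So the real content is to re-run the proof of Lemma \ref{l1} / Corollary \ref{c11} while exploiting the asymmetry: only $X$ is required to vanish on $H$.

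Concretely, I would start from the integration-by-parts identity established inside the proof of Lemma \ref{l1}:
$$X_{t}Y_{t}-[X,Y]_{t}-\int_{0}^{t}{X_{s}d\Gamma_{s}}-\int_{0}^{t}{Y_{s}dA_{s}}=\int_{0}^{t}{X_{s-}dW_{s}}+\int_{0}^{t}{Y_{s-}dM_{s}},$$
valid here with $M = m$. The term $\int_{0}^{t}{Y_{s-}dm_{s}}$ is a stochastic integral of a predictable process against a genuine local martingale, hence a local martingale outright — no hypothesis on $Y$ on $H$ is needed. For the term $\int_{0}^{t}{X_{s-}dW_{s}}$, write $W = m' + v'$ with $m'$ a c\`adl\`ag local martingale and $dv'$ carried by $H$ (possible since $W \in \mathcal{M}(H)$); then $\int_{0}^{t}{X_{s-}dm'_{s}}$ is again a local martingale, while $\int_{0}^{t}{X_{s-}dv'_{s}} = \int_{0}^{t}{X_{s}dv'_{s}}$ by continuity of $v'$, and this vanishes because $dv'$ is carried by $H$ and $X$ vanishes on $H$. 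Summing the two, the process on the right is a local martingale, which is exactly the claim. I would also invoke Remark \ref{R2.1} to phrase the ``stochastic integral against $\mathcal{M}(H)$ is in $\mathcal{M}(H)$, and is a local martingale if the integrand vanishes on $H$'' step cleanly.

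The main (minor) obstacle is simply the bookkeeping of the decompositions: one must be careful that a class $(\Sigma)$ process, a priori written $X = m + A$ with $m$ a local martingale, genuinely fits the Definition \ref{d2} template $X = M + A$ with $M \in \mathcal{M}(H)$ — this is immediate since every local martingale belongs to $\mathcal{M}(H)$ (take $v \equiv 0$), but it should be stated. A secondary point to handle with a word of care is that in Corollary \ref{c11} the roles of $X$ and $Y$ are symmetric, whereas here they are not; the proof must make clear that the term needing the ``vanishing on $H$'' property is precisely $\int_{0}^{t}{X_{s-}dv'_{s}}$ coming from $Y$'s martingale-in-$\mathcal{M}(H)$ part, and that the mirror term $\int_{0}^{t}{Y_{s-}dv_{s}}$ never arises because $X$'s finite-variation-on-$H$ part is zero. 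Apart from that, the argument is a direct specialization of the preceding proof and requires no new estimates.
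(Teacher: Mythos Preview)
Your proposal is correct and follows essentially the same route as the paper: starting from the integration-by-parts identity of Lemma \ref{l1}, you split the right-hand side as $\int_0^t Y_{s-}\,dm_s + \int_0^t X_{s-}\,dW_s$, observe that the first integral is a local martingale because $m$ is one, and show the second is a local martingale by decomposing $W=m'+v'$ and using that $X$ vanishes on $H$ to kill the $dv'$ term --- which is exactly what the paper does (packaging the last step via Remark \ref{R2.1}). Your preliminary remark that $X\in(\Sigma)$ with $v\equiv 0$ fits Definition \ref{d2} is a helpful clarification the paper leaves implicit.
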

\begin{proof}
 According what precedes, we have:
$$X_{t}Y_{t}-[X,Y]_{t}-\int_{0}^{t}{X_{s}d\Gamma_{s}}-\int_{0}^{t}{Y_{s}dA_{s}}=\int_{0}^{t}{X_{s-}dW_{s}}+\int_{0}^{t}{Y_{s-}dm_{s}}.$$
But, $m$ is a local martingale. Hence, $\int_{0}^{\cdot}{Y_{s-}dm_{s}}$ is a local martingale. On another hand, we know from Remark \ref{R2.1} that $\int_{0}^{\cdot}{X_{s-}dW_{s}}$ is also a local martingale since $X$ vanishes on $H$ and $W\in\mathcal{M}(H)$. Which completes the proof.
\end{proof}

\begin{coro}\label{c13}
Let $(X_{t}^{1})_{t\geq0},\cdots,(X_{t}^{n})_{t\geq0}$ be processes of the class $\Sigma^{r}(H)$ such that $[ X^{i},X^{j}]=0$ for $i\neq j$. Hence, the following hold:
\begin{enumerate}
	\item $(\Pi_{i=1}^{n}{X^{i}_{t}})_{t\geq0}$ is also of class $\Sigma^{r}(H)$.
	\item If $\forall i\in\{1,\cdots, n\}$, $X^{i}$ vanishes on $H$. Hence, $(\Pi_{i=1}^{n}{X^{i}_{t}})_{t\geq0}$ is a process of the class $(\Sigma)$.
	\item If $\exists l\in\{1,\cdots, n\}$ such that $X^{l}$ is a process of the class $(\Sigma)$ which vanishes on $H$. Hence, $(\Pi_{i=1}^{n}{X^{i}_{t}})_{t\geq0}$ is a process of the class $(\Sigma)$.
\end{enumerate}
\end{coro}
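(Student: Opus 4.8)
The strategy is to reduce everything to the bilinear statement of Lemma~\ref{l1} by induction on $n$, handling the product $P^{n}_{t}=\prod_{i=1}^{n}X^{i}_{t}$ step by step. First I would set up the induction: for $n=1$ there is nothing to prove, and for $n=2$ apply Lemma~\ref{l1} directly to $X^{1}=M^{1}+A^{1}$ and $X^{2}=M^{2}+A^{2}$. Since $[X^{1},X^{2}]=0$ by hypothesis, Lemma~\ref{l1} gives that
$$X^{1}_{t}X^{2}_{t}-\int_{0}^{t}X^{1}_{s}\,dA^{2}_{s}-\int_{0}^{t}X^{2}_{s}\,dA^{1}_{s}\in\mathcal{M}(H),$$
i.e. this process equals $m^{(2)}+v^{(2)}$ with $m^{(2)}$ a local martingale and $\mathrm{supp}\,dv^{(2)}\subset H$. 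The remaining two integral terms are finite variation processes; I would then check that the measure $d\left(\int_{0}^{\cdot}X^{1}_{s}\,dA^{2}_{s}+\int_{0}^{\cdot}X^{2}_{s}\,dA^{1}_{s}\right)$ is carried by $\{X^{1}X^{2}=0\}$. This holds because $dA^{2}$ is carried by $\{X^{2}=0\}$ and $dA^{1}$ by $\{X^{1}=0\}$, so on the support of each integrator the corresponding factor (and hence the product $P^{2}=X^{1}X^{2}$) vanishes. That proves $P^{2}\in\Sigma^{r}(H)$ and simultaneously exhibits its canonical decomposition.

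For the inductive step, write $P^{n}=P^{n-1}\cdot X^{n}$. By the induction hypothesis $P^{n-1}\in\Sigma^{r}(H)$, so by Proposition~\ref{prop4} it has a decomposition $P^{n-1}=\widetilde M+\widetilde A$ as in Definition~\ref{d2}, with $d\widetilde A$ carried by $\{P^{n-1}=0\}$. The one genuinely non-routine point is that Lemma~\ref{l1} requires $[P^{n-1},X^{n}]=0$; this follows from the hypothesis $[X^{i},X^{j}]=0$ for $i\neq j$ together with the fact that $P^{n-1}$ is, up to a finite-variation part, a stochastic integral built from $X^{1},\dots,X^{n-1}$ — one uses that quadratic covariation kills finite-variation terms and that $[\,\int H\,dU, V] = \int H\, d[U,V]$. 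I expect this bracket-vanishing verification to be the main obstacle, since it requires tracking how the martingale part of $P^{n-1}$ is assembled through the successive integration-by-parts steps; it may be cleanest to prove, as an auxiliary claim carried along the induction, that the local-martingale part of $P^{n-1}$ is a sum of stochastic integrals with respect to $X^{1},\dots,X^{n-1}$, whose brackets with $X^{n}$ therefore all vanish. Granting $[P^{n-1},X^{n}]=0$, Lemma~\ref{l1} applies and the same support argument as in the $n=2$ case shows the new finite-variation correction is carried by $\{P^{n-1}X^{n}=0\}=\{P^{n}=0\}$, giving $P^{n}\in\Sigma^{r}(H)$ and proving~(1).

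Part~(2) is then immediate: if every $X^{i}$ vanishes on $H$, I would argue inductively that $P^{n}$ vanishes on $H$ and that the finite-variation $v$-part in its $\Sigma^{r}(H)$-decomposition is identically zero — indeed the $v$-contributions all arise (via Corollary~\ref{c11} and Remark~\ref{R2.1}) from integrals $\int Y_{s-}\,dv_{s}$ against measures carried by $H$, which vanish because the integrand vanishes on $H$. Hence $P^{n}=m+A$ with $dA$ carried by $\{P^{n}=0\}$, i.e. $P^{n}\in(\Sigma)$. For part~(3), without loss of generality take $l=n$ and write $P^{n}=P^{n-1}\cdot X^{n}$ with $X^{n}\in(\Sigma)$ vanishing on $H$; by part~(1) $P^{n-1}\in\Sigma^{r}(H)$, and I would invoke Corollary~\ref{c12} (with the roles $X:=X^{n}$, $Y:=P^{n-1}$) to conclude that the martingale part of $P^{n}$ is a genuine local martingale, while the support argument again places the finite-variation correction on $\{P^{n}=0\}$; together these give $P^{n}\in(\Sigma)$. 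The only care needed in~(3) is that $X^{n}$ vanishing on $H$ forces the $\int P^{n-1}_{s-}\,dv^{n}\equiv0$-type term to drop out even though $P^{n-1}$ itself need not vanish on $H$ — but $dv^{n}=0$ here since $X^{n}\in(\Sigma)$ has no $v$-part at all, so this is automatic.
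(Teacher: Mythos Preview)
Your proposal is correct and follows essentially the same route as the paper: induction via Lemma~\ref{l1} for~(1), then Corollaries~\ref{c11}/\ref{c12} for~(2) and~(3), with the same support argument for the finite-variation part at each step. You are in fact more careful than the paper on one point: the paper's induction silently assumes $[P^{n-1},X^{n}]=0$, whereas you correctly flag this and outline the fix (the martingale part of $P^{n-1}$ is a sum of stochastic integrals against the $X^{i}$, $i<n$, so its bracket with $X^{n}$ inherits the vanishing from $[X^{i},X^{n}]=0$).
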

\begin{proof}
\begin{enumerate}
	\item Let us first take $n=2$. Through Lemma \ref{l1}, we obtain that 
	$$\left(X^{1}_{t}X^{2}_{t}-[X^{1},X^{2}]_{t}-\int_{0}^{t}{X^{1}_{s}dA^{2}_{s}}-\int_{0}^{t}{X^{2}_{s}dA^{1}_{s}}:t\geq0\right)$$
	is a process of the class $\mathcal{M}(H)$. That is, $X^{1}X^{2}\in\Sigma^{r}(H)$ since $[X^{1},X^{2}]=0$. Hence, we obtain by induction that for any family $(X_{t}^{1})_{t\geq0},\cdots,(X_{t}^{n})_{t\geq0}$ of the class $\Sigma^{r}(H)$ such that $[ X^{i},X^{j}]=0$ for $i\neq j$, the process $(\Pi_{i=1}^{n}{X^{i}_{t}})_{t\geq0}$ is also of class $\Sigma^{r}(H)$.
	\item We proceed in the same way as 1) by using Corollary \ref{c12} instead of Lemma \ref{l1} to show that $(\Pi_{i=1}^{n}{X^{i}_{t}})_{t\geq0}$ is a process of the class $(\Sigma)$.
	\item Now, we assume that there exists $l\in\{1,\cdots,n\}$ such that $X^{l}$ is a process of the class $(\Sigma)$ that is vanishing on $H$. Remark that $\forall t\geq0$,
	$$\Pi_{i=1}^{n}{X^{i}_{t}}=X_{t}^{l}\times\Pi_{i=1,i\neq l}^{n}{X^{i}_{t}}.$$
	But, we can see from 1) that $\Pi_{i=1,i\neq l}^{n}{X^{i}_{t}}\in\Sigma^{r}(H)$. Hence, we obtain the result by using Corollary \ref{c12}.
\end{enumerate}
\end{proof}

\begin{coro}\label{c14}
Let $X=m+v+A$ be a process of the class $\Sigma^{r}(H)$. Hence, for every locally bounded Borel function $f$, $f(A)X$ is also a process of the class $\Sigma^{r}(H)$ and $f(A)X-\int_{0}^{\cdot}{f(A_{s})dA_{s}}$ is a process of the class $\mathcal{M}(H)$.
\end{coro}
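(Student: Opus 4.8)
The plan is to reduce everything to Lemma \ref{l1} applied with $Y = f(A)X$, after first checking that $f(A)X$ genuinely belongs to $\Sigma^{r}(H)$. The natural route is to treat the bounded case first and then remove the boundedness assumption by a localization/monotone-class argument.

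First I would establish the claim for a bounded $C^1$ function $f$. Writing $X = M + A$ with $M \in \mathcal M(H)$ (using the decomposition of Definition \ref{d2}, legitimate by Proposition \ref{prop4}), I would apply the integration-by-parts / It\^o formula to the product $f(A_t) X_t$. Since $A$ is continuous and of finite variation, $[f(A),X] = 0$ and $d(f(A_t)) = f'(A_t)\,dA_t$, so
$$f(A_t)X_t = \int_0^t f(A_s)\,dX_s + \int_0^t X_s f'(A_s)\,dA_s = \int_0^t f(A_s)\,dM_s + \int_0^t f(A_s)\,dA_s + \int_0^t X_s f'(A_s)\,dA_s.$$
Now $\int_0^{\cdot} f(A_s)\,dM_s \in \mathcal M(H)$ by Remark \ref{R2.1} (the integrand $f(A_\cdot)$ is progressive, and one may replace it by its left-continuous version since $A$ is continuous). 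The term $\int_0^{\cdot} X_s f'(A_s)\,dA_s$ has finite variation and its differential is carried by $\{X=0\}$, because $dA$ is; and when $X_s = 0$ the integrand $X_s f'(A_s)$ vanishes, so in fact this integral is identically zero — hence harmless. Thus $f(A)X - \int_0^{\cdot} f(A_s)\,dA_s = \int_0^{\cdot} f(A_s)\,dM_s \in \mathcal M(H)$, which is exactly the second assertion, and it immediately gives $f(A)X \in \Sigma^{r}(H)$ with finite-variation part $\int_0^{\cdot} f(A_s)\,dA_s$ carried by $\{X=0\}$ (its value at $0$ is $0$, and it is continuous).

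To pass from $C^1$ to general locally bounded Borel $f$, I would first note local boundedness lets us stop at $T_n = \inf\{t : |A_t| \ge n\}$ and work on $[0,T_n]$ where $f$ is bounded. Then I would invoke a functional monotone class argument: the family of bounded Borel $f$ for which $f(A)X \in \Sigma^{r}(H)$ and $f(A)X - \int_0^\cdot f(A_s)\,dA_s \in \mathcal M(H)$ is a vector space, contains the bounded $C^1$ functions (hence the constants and a multiplicative generating class), and is stable under bounded pointwise limits — the last point using dominated convergence for the stochastic integral $\int f(A_s)\,dM_s$ (along a localizing sequence) and for the Stieltjes integral $\int f(A_s)\,dA_s$. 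One subtlety: the Stieltjes integral $\int_0^{\cdot} X_s f'(A_s)\,dA_s$ only makes sense for $C^1$ $f$, but since we showed it is the zero process, it never appears in the limiting identity, so the monotone-class step only ever deals with $\int f(A_s)\,dM_s$ and $\int f(A_s)\,dA_s$, both well-defined for Borel $f$.

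The main obstacle I anticipate is the justification that $\int_0^{\cdot} f(A_s)\,dM_s \in \mathcal M(H)$ for merely Borel $f(A_\cdot)$: Remark \ref{R2.1} is stated for predictable integrands, and $f(A_\cdot)$ is a priori only progressive, so one should either argue that a progressively measurable integrand composed with a continuous process can be replaced (up to indistinguishability of the integral) by a predictable one — true since $A$ is continuous, hence $f(A_\cdot)$ differs from its left limits only on a countable set — or carry out the monotone-class argument at the level of predictable integrands from the start. A secondary (routine) point is verifying the defining conditions of Definition \ref{d1} for $f(A)X$, namely that the finite-variation part $\int_0^\cdot f(A_s)\,dA_s$ is continuous, null at $0$, and carried by $\{X = 0\}$; all three follow directly from the corresponding properties of $A$.
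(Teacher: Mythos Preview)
Your argument is correct. At its core it is the same integration-by-parts computation the paper uses, but packaged differently: the paper's one-line proof simply asserts that $f(A)\in\Sigma^{r}(H)$ and invokes Lemma~\ref{l1} applied to the pair $(X,f(A))$ (not $Y=f(A)X$ as your opening sentence says; your body never actually calls on Lemma~\ref{l1} but reproduces its proof). From Lemma~\ref{l1} one gets $f(A)X - \int_0^\cdot X_s\,d(f(A_s)) - \int_0^\cdot f(A_s)\,dA_s \in \mathcal{M}(H)$, and the middle integral vanishes since $d(f(A))$ is carried by $\{X=0\}$. Where you go beyond the paper is in the passage from $C^1$ to general locally bounded Borel $f$: the paper states the result in that generality without comment, whereas you supply a monotone-class justification. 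This extra care is well placed, since for non-smooth $f$ the process $f(A)$ need not have finite variation and no chain rule for $d(f(A))$ is available, so the paper's assertion that $f(A)\in\Sigma^{r}(H)$ is delicate. Your worry about predictability of $f(A_\cdot)$ is unnecessary: $A$ is continuous adapted, hence predictable, and Borel functions of predictable processes are predictable. Finally, note that the key identity $f(A_t)X_t = \int_0^t f(A_s)\,dM_s + \int_0^t f(A_s)\,dA_s$ can be obtained in one stroke for all locally bounded Borel $f$ via the predictable balayage formula (as in Lemma~\ref{l8} and the proof of Theorem~\ref{th3}): since $dA$ is carried by $\{X=0\}$ one has $A_t=A_{g_t}$, hence $f(A_t)=f(A_{g_t})$, and predictable balayage yields $f(A_{g_t})X_t=\int_0^t f(A_s)\,dX_s$ directly, bypassing the monotone-class step.
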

\begin{proof}
We obtain from Lemma \ref{l1} that $f(A)X\in\Sigma^{r}(H)$ since for each element $X=m+v+A$ of the class $\Sigma^{r}(H)$, $f(A)$ is also a process of the class $\Sigma^{r}(H)$. 
\end{proof}

In next lemma, we study the negative and positive parts of processes of the class $\Sigma^{r}(H)$.
\begin{lem}\label{l2}
Let $X=m+v+A$ be a process of the class $\Sigma^{r}(H)$. The following hold:
\begin{enumerate}
	\item $X^{+}-\int_{0}^{\cdot}{1_{\{X_{s}>0\}}dv_{s}}$ is a local submartingale;
	\item if $X$ has no positive jump. Hence, $X^{+}\in\Sigma^{r}(H)$;
	\item if $X$ has no negative jump. Hence, $X^{-}\in\Sigma^{r}(H)$.
\end{enumerate}
\end{lem}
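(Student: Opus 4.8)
The plan is to apply the Tanaka--Meyer formula to $X^{+}$ and then use the structural properties of the decomposition $X = m + v + A$ together with the fact that $dA$ is carried by $\{X_{\cdot}=0\}$. First I would recall that for a general semimartingale $X$ one has
$$X_{t}^{+} = X_{0}^{+} + \int_{0}^{t}{1_{\{X_{s-}>0\}}dX_{s}} + \frac{1}{2}L_{t}^{0}(X) + \sum_{s\leq t}{\left(1_{\{X_{s-}>0\}}(X_{s}^{-}-X_{s-}^{-}) + 1_{\{X_{s-}\leq0\}}X_{s}^{+}\right)},$$
so that $X^{+} - \int_{0}^{\cdot}{1_{\{X_{s-}>0\}}dX_{s}}$ is an increasing process (the local time term plus the jump terms, all nonnegative), hence $X^{+}-\int_{0}^{\cdot}{1_{\{X_{s-}>0\}}dX_{s}}$ is a local submartingale; but $\int_{0}^{\cdot}{1_{\{X_{s-}>0\}}dX_{s}} = \int_{0}^{\cdot}{1_{\{X_{s-}>0\}}dm_{s}} + \int_{0}^{\cdot}{1_{\{X_{s}>0\}}dv_{s}} + \int_{0}^{\cdot}{1_{\{X_{s}>0\}}dA_{s}}$ (using continuity of $v$ and $A$ to drop the left limits), and the last integral vanishes because $1_{\{X_{s}>0\}}dA_{s}=0$ since $dA$ is carried by $\{X_{s}=0\}$. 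Since $\int_{0}^{\cdot}{1_{\{X_{s-}>0\}}dm_{s}}$ is a local martingale, subtracting it preserves the local submartingale property, which yields statement~1: $X^{+}-\int_{0}^{\cdot}{1_{\{X_{s}>0\}}dv_{s}}$ is a local submartingale.

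For statement~2, suppose $X$ has no positive jumps. Then all the jump terms in the Tanaka formula involving $1_{\{X_{s-}\leq0\}}X_{s}^{+}$ vanish (a jump landing strictly above $0$ from $\leq 0$ would be a positive jump), and the remaining jump terms $1_{\{X_{s-}>0\}}(X_{s}^{-}-X_{s-}^{-})$ also simplify; in fact with no positive jumps one gets the cleaner decomposition
$$X_{t}^{+} = X_{0}^{+} + \int_{0}^{t}{1_{\{X_{s-}>0\}}dX_{s}} + \frac{1}{2}L_{t}^{0}(X).$$
Plugging in the decomposition of $X$ and using $1_{\{X_{s}>0\}}dA_{s}=0$ again, we obtain
$$X_{t}^{+} = \int_{0}^{t}{1_{\{X_{s-}>0\}}dm_{s}} + \int_{0}^{t}{1_{\{X_{s}>0\}}dv_{s}} + \frac{1}{2}L_{t}^{0}(X).$$
Here $\int_{0}^{\cdot}{1_{\{X_{s-}>0\}}dm_{s}}$ is a càdlàg local martingale null at $0$; the process $\widetilde{v}_{\cdot}:=\int_{0}^{\cdot}{1_{\{X_{s}>0\}}dv_{s}}$ is continuous, of finite variation, null at $0$, and $d\widetilde{v}$ is carried by $H$ since $d v$ is; and $\widetilde{A}_{\cdot}:=\frac{1}{2}L_{\cdot}^{0}(X)$ is continuous, increasing, null at $0$, with $dL^{0}(X)$ carried by $\{X_{\cdot}=0\}=\{X_{\cdot}^{+}=0\}$. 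This is exactly the decomposition required by Definition~\ref{d1}, so $X^{+}\in\Sigma^{r}(H)$. Statement~3 follows by applying statement~2 to $-X$, which belongs to $\Sigma^{r}(H)$ (the class is clearly stable under sign change, since $m$, $v$, $A$ all negate and the carrying sets are unchanged) and has no positive jump when $X$ has no negative jump, noting $X^{-}=(-X)^{+}$.

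The main technical point to handle carefully is the bookkeeping of the jump terms in the Tanaka--Meyer formula: one must be sure that, under the no-positive-jump hypothesis, the only surviving extra term beyond the stochastic integral is $\tfrac12 L^{0}(X)$, i.e.\ that there is no hidden purely-discontinuous finite variation contribution that would spoil membership in $\Sigma^{r}(H)$. I would address this by invoking the standard form of Tanaka's formula for càdlàg semimartingales and checking term by term that $1_{\{X_{s-}>0,\,X_{s}\leq 0\}}$ and $1_{\{X_{s-}\leq 0,\,X_{s}>0\}}$ contributions vanish when jumps of the sign that would be needed are excluded; the continuity of $v$ and $A$ means all jumps of $X$ come from $m$, so ``$X$ has no positive jump'' is equivalent to ``$m$ has no positive jump'', which makes the excursion structure transparent. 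A secondary point is the identification $\{X_{t}^{+}=0\} \supseteq \{X_{t}=0\}$ so that $dL^{0}(X)$ being carried by $\{X=0\}$ indeed gives a valid $A$-component for $X^{+}$; this is immediate. For statement~1 no jump hypothesis is needed and the argument is just the sign decomposition plus the observation that the local-time-plus-jumps remainder is increasing, so it is the cleanest of the three.
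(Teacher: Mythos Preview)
Your argument for part~1 is essentially the same as the paper's, and your reduction of part~3 to part~2 via $X^{-}=(-X)^{+}$ is also what the paper does.

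The gap is in part~2. Your ``cleaner decomposition''
\[
X_{t}^{+} = X_{0}^{+} + \int_{0}^{t}{1_{\{X_{s-}>0\}}dX_{s}} + \tfrac{1}{2}L_{t}^{0}(X)
\]
is false under the hypothesis that $X$ has no positive jumps. The term $1_{\{X_{s-}\leq0\}}X_{s}^{+}$ does vanish (it would require $X_{s-}\leq 0$, $X_{s}>0$, hence a positive jump), but the other term $1_{\{X_{s-}>0\}}X_{s}^{-}$ does \emph{not}: it is nonzero precisely when $X_{s-}>0$ and $X_{s}<0$, i.e.\ when a \emph{negative} jump crosses zero from above, and such jumps are not excluded. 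Your own checklist at the end (``checking term by term that $1_{\{X_{s-}>0,\,X_{s}\leq 0\}}$ and $1_{\{X_{s-}\leq 0,\,X_{s}>0\}}$ contributions vanish'') contains exactly this error: the first of these events corresponds to a negative jump.

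The paper keeps the surviving jump process $Z_{t}=\sum_{0<s\leq t}1_{\{X_{s-}>0\}}X_{s}^{-}$ and compensates it: writing $Z=(Z-V^{Z})+V^{Z}$ with $V^{Z}$ the predictable increasing compensator, the piece $Z-V^{Z}$ is absorbed into the local martingale part, and one checks (via $1_{\{X_{s}>0\}}X_{s}^{-}=0$) that $dV^{Z}$ is carried by $\{X^{+}=0\}$, so $V^{Z}+\tfrac12 L^{0}$ serves as the $A$-component for $X^{+}$. Without this compensation step your decomposition of $X^{+}$ has a purely discontinuous increasing piece that fits neither into $m$, nor into $v$, nor into $A$ (the latter two being continuous by Definition~\ref{d1}), so membership in $\Sigma^{r}(H)$ does not follow.
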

\begin{proof}
\begin{enumerate}
	\item From Tanaka's formula, we have
	$$X_{t}^{+}=\int_{0}^{t}{1_{\{X_{s-}>0\}}dX_{s}}+\sum_{0<s\leq t}{1_{\{X_{s-}\leq0\}}X_{s}^{+}}+\sum_{0<s\leq t}{1_{\{X_{s-}>0\}}X_{s}^{-}}+\frac{1}{2}L_{t}^{0}.$$
	However,
	$$\hspace{-4cm}\int_{0}^{t}{1_{\{X_{s-}>0\}}dX_{s}}=\int_{0}^{t}{1_{\{X_{s-}>0\}}d(m_{s}+v_{s})}+\int_{0}^{t}{1_{\{X_{s-}>0\}}dA_{s}}$$
	$$\hspace{2cm}=\int_{0}^{t}{1_{\{X_{s-}>0\}}d(m_{s}+v_{s})}+\int_{0}^{t}{1_{\{X_{s}>0\}}dA_{s}}=\int_{0}^{t}{1_{\{X_{s-}>0\}}d(m_{s}+v_{s})}$$
	because $A$ is continuous and $dA$ is carried by $\{t\geq0:X_{t}=0\}$. Then,
	$$\hspace{-4cm}\int_{0}^{t}{1_{\{X_{s-}>0\}}dX_{s}}=\int_{0}^{t}{1_{\{X_{s-}>0\}}dm_{s}}+\int_{0}^{t}{1_{\{X_{s}>0\}}dv_{s}}$$
	because $dv$ is also continuous. Hence, we get
	$$X_{t}^{+}-\int_{0}^{t}{1_{\{X_{s}>0\}}dv_{s}}=\int_{0}^{t}{1_{\{X_{s-}>0\}}dm_{s}}+\sum_{0<s\leq t}{1_{\{X_{s-}\leq0\}}X_{s}^{+}}+\sum_{0<s\leq t}{1_{\{X_{s-}>0\}}X_{s}^{-}}+\frac{1}{2}L_{t}^{0}.$$
 Moreover, $\left(\sum_{0<s\leq t}{1_{\{X_{s-}\leq0\}}X_{s}^{+}}+\sum_{0<s\leq t}{1_{\{X_{s-}>0\}}X_{s}^{-}}+\frac{1}{2}L_{t}^{0};t\geq0\right)$ is an increasing process that is vanishing at zero. Then, $X^{+}-\int_{0}^{\cdot}{1_{\{X_{s}>0\}}dv_{s}}$ is a submartingale, since $m$ and $\int_{0}^{\cdot}{1_{\{X_{s-}>0\}}dm_{s}}$ are local martingales.
	\item We have
	\begin{equation}\label{*}
		X_{t}^{+}-\int_{0}^{t}{1_{\{X_{s}>0\}}dv_{s}}=\int_{0}^{t}{1_{\{X_{s-}>0\}}dm_{s}}+\sum_{0<s\leq t}{1_{\{X_{s-}>0\}}X_{s}^{-}}+\frac{1}{2}L_{t}^{0}
	\end{equation}
	since $X$ has no positive jump. Now, let us set $Z_{t}=\sum_{0<s\leq t}{1_{\{X_{s-}>0\}}X_{s}^{-}}$. Since $m$ and $\int_{0}^{\cdot}{1_{\{X_{s-}>0\}}dm_{s}}$ are local martingales and $v+A$ is continuous, there exists a sequence of stopping times $(T_{n};n\in\N)$ increasing to $\infty$ such that 
	$$E[(X_{T_{n}})^{+}]=E[(m_{T_{n}}+v_{T_{n}}+A_{T_{n}})^{+}]<\infty\text{ and }E\left[\int_{0}^{T_{n}}{1_{\{X_{s-}>0\}}dm_{s}}\right]=0\text{, }n\in\N.$$
	It follows from \eqref{*} that $E[Z_{T_{n}}]\leq E\left[(X_{T_{n}})^{+}-\int_{0}^{T_{n}}{1_{\{X_{s}>0\}}dv_{s}}\right]<\infty$ for all $n\in\N$. Thus, by Theorem VI.80 of \cite{pot}, there exists a right continuous increasing predictable process $V^{Z}$ such that $Z-V^{Z}$ is a local martingale vanishing at zero. Moreover, there exists a sequence of stopping times $(R_{n};n\in\N)$ increasing to $\infty$ such that
	$$E\left[\int_{0}^{t\wedge R_{n}}{1_{\{X^{+}_{s}\neq0\}}dV_{s}^{Z}}\right]=E\left[\int_{0}^{t\wedge R_{n}}{1_{\{X^{+}_{s}\neq0\}}d(V_{s}^{Z}-Z_{s})}+\int_{0}^{t\wedge R_{n}}{1_{\{X^{+}_{s}\neq0\}}dZ_{s}}\right]$$
	$$\hspace{-0.5cm}=E\left[\int_{0}^{t\wedge R_{n}}{1_{\{X^{+}_{s}\neq0\}}dZ_{s}}\right].$$
	Hence,
	$$E\left[\int_{0}^{t\wedge R_{n}}{1_{\{X^{+}_{s}\neq0\}}dV_{s}^{Z}}\right]=E\left[\sum_{0<s\leq t\wedge R_{n}}{1_{\{X_{s}^{+}\neq0\}}1_{\{X_{s-}>0\}}X_{s}^{-}}\right]=E\left[\sum_{0<s\leq t\wedge R_{n}}{1_{\{X_{s}>0\}}1_{\{X_{s-}>0\}}X_{s}^{-}}\right].$$
	Thus,
	$$E\left[\int_{0}^{t\wedge R_{n}}{1_{\{X^{+}_{s}\neq0\}}dV_{s}^{Z}}\right]=0,$$
	since $1_{\{X_{s}>0\}}X_{s}^{-}=0$. This implies that $\int_{0}^{t}{1_{\{X_{s}^{+}\neq0\}}dV_{s}^{Z}}=0$. Therefore, $dV_{t}^{Z}$ is carried by $\{t\geq0;X_{t}^{+}=0\}$. Consequently,
	$$X^{+}_{t}=\left(\int_{0}^{t}{1_{\{X_{s-}>0\}}dm_{s}}+(Z_{t}-V_{t}^{Z})+\int_{0}^{t}{1_{\{X_{s}>0\}}dv_{s}}\right)+\left(V_{t}^{Z}+\frac{1}{2}L_{t}^{0}\right)$$
	is a stochastic process of the class $\Sigma^{r}(H)$.
	\item It is obvious that $(-X)$ is of the class $\Sigma^{r}(H)$ and it has no positive jump. Then, from 3), $X-=(-X)^{+}$ is also of the class $\Sigma^{r}(H)$.
\end{enumerate}
\end{proof}

\begin{rem}\label{r2.1}
For any non negative process $X=m+v+A$ of the class $\Sigma^{r}(H)$, $X-v$ is a local sub-martingale. That is, $A$ is a non decreasing process.
\end{rem}

\begin{coro}
If $X$ is non-negative process of the class $\Sigma^{r}(H)$, hence, its decomposition of the form: $X=m+v+A$ is unique.
\end{coro}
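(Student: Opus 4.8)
The plan is to reduce the statement to the uniqueness of the canonical decomposition of a special semimartingale, and then to separate the two finite‑variation pieces by combining the support conditions of Definition \ref{d1} with the monotonicity that non‑negativity forces through Remark \ref{r2.1}.

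Concretely, suppose $X$ admits two decompositions $X=m+v+A$ and $X=m'+v'+A'$ in the sense of Definition \ref{d1}. Since $v,A,v',A'$ are continuous and adapted, hence predictable, each identity displays $X$ as a special semimartingale with local‑martingale part $m$ (resp. $m'$) and predictable finite‑variation part $v+A$ (resp. $v'+A'$). By uniqueness of the canonical decomposition of a special semimartingale — equivalently, because $m-m'=(v'-v)+(A'-A)$ is a continuous local martingale of finite variation null at $0$, hence identically zero — I obtain $m=m'$ and $v+A=v'+A'$. Setting $C:=v-v'=A'-A$, condition (3) of Definition \ref{d1} shows, reading $C$ first as $v-v'$ and then as $A'-A$, that $dC$ is carried by $H$ and also by $\{s\ge 0:X_s=0\}$, hence by $H\cap\{X=0\}$. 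Furthermore $X\ge 0$, so Remark \ref{r2.1} applies to each decomposition: $A$ and $A'$ are non‑decreasing, and therefore in the Hahn decomposition $dC=dC^{+}-dC^{-}$ one has $dC^{+}\le dA'$ and $dC^{-}\le dA$.

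It remains to show $C\equiv 0$ — equivalently $v=v'$ and $A=A'$ — and this I expect to be the main obstacle. The difficulty is that $H$ and $\{X=0\}$ may overlap, so that finite‑variation mass could a priori be traded between $v$ (only required to be carried by $H$) and $A$ (only required to be carried by $\{X=0\}$) along the common set $H\cap\{X=0\}$; a bare comparison of supports does not suffice. My intended route is to exploit the competing constraints just recorded — $dC^{\pm}$ are carried by $H$ and are simultaneously dominated by the monotone parts $dA'$ and $dA$ — and, more precisely, to prove that $\int_0^{\cdot}1_{\{X_s=0\}}\,|dv_s|\equiv 0$ for any decomposition $X=m+v+A$ of a non‑negative element of $\Sigma^{r}(H)$, i.e. that $dv$ cannot charge the zero set of $X$; here I expect to use the local‑submartingale structure of $X-v$ from Remark \ref{r2.1} together with Lemma \ref{l2} on positive parts. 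Granting this for both decompositions and multiplying $v+A=v'+A'$ by $1_{\{X_s=0\}}$ yields $A=A'$, whence $v=v'$ and $m=m'$. This localization of $dv$ away from $\{X=0\}$, which is the only place where non‑negativity genuinely enters, is the step I anticipate will require the most care.
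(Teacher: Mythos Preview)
Your reduction to $m=m'$ and $v+A=v'+A'$ via the canonical decomposition of a special semimartingale is correct, and you have correctly isolated the real obstacle: mass can in principle be shifted between $v$ and $A$ along $H\cap\{X=0\}$. Unfortunately the lemma you hope to establish --- that $\int_0^{\cdot}1_{\{X_s=0\}}\,|dv_s|\equiv 0$ for every admissible decomposition of a non-negative $X$ --- is false, and with it the corollary as stated.

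Take $D_0=0$ and set $X=|D|$. Then $X\ge 0$, $X_0=0$, $\{X=0\}=\{D=0\}=H$, and Tanaka's formula gives $X=\int_0^{\cdot}\mathrm{sgn}(D_s)\,dD_s+L^0(D)$. Both
\[
m=\int_0^{\cdot}\mathrm{sgn}(D_s)\,dD_s,\qquad v=L^0(D),\qquad A=0
\]
and
\[
m=\int_0^{\cdot}\mathrm{sgn}(D_s)\,dD_s,\qquad v=0,\qquad A=L^0(D)
\]
satisfy every requirement of Definition~\ref{d1} (here $H=\{X=0\}$, so each support condition is the same), yet they differ; indeed any split $v=\lambda L^0(D)$, $A=(1-\lambda)L^0(D)$ with $\lambda\in[0,1]$ works. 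In the first decomposition $dv$ is entirely carried by $\{X=0\}$, which refutes your intended lemma directly.

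The paper's own argument does not escape this. It notes via Remark~\ref{r2.1} that $X-v$ is a local submartingale, so that $m$ and $A$ are the unique Doob--Meyer components of $X-v$, and then asserts that ``this entails also the uniqueness of $v$''. But Doob--Meyer only pins down $m$ and $A$ \emph{once $v$ is fixed}; it gives no reason why two admissible choices of $v$ must agree, and the example shows they need not. What is genuinely unique is $m$ and the sum $v+A$; separating the two summands would require an extra normalisation (e.g.\ $\int 1_{\{X_s=0\}}\,dv_s=0$, or $\int 1_{H}(s)\,dA_s=0$) that Definition~\ref{d1} does not impose.
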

\begin{proof}
By definition, $X=m+v+A$, where $m$ is a local martingale, $v$ and $A$ are processes with finite variations such that $dv$ and $dA$ are carried respectively  by $H$ and $\{t\geq0:X_{t}=0\}$. But, we know from Remark \ref{r2.1} that $X-v$ is a local sub-martingale. Hence, $m$ and $A$ are respectively the unique local martingale and the unique increasing process such that $X-v=m+A$. This entails also the uniqueness of $v$. Which completes the proof.
\end{proof}

Now let's infer the properties deriving from balayage formulas.
\begin{lem}\label{l8}
Let $X=M+A$ be a process of the class $\Sigma^{r}(H)$, and let $g_{t}=\sup\{s\leq t:X_{s}=0\}$. The following hold:
\begin{enumerate}
	\item If $X$ is continuous. Hence, for any locally bounded predictable process $k$, $k_{g_{\cdot}}X$ is an element of the class $\Sigma^{r}(H)$ whose finite variations part is $\Gamma=\int_{0}^{\cdot}{k_{g_{s}}dA_{s}}$;
	\item If $X$ is right continuous. Hence, for any bounded predictable process $k$, $k_{g_{\cdot}}X$ is an element of the class $\Sigma^{r}(H)$ whose finite variations part is $\Gamma=\int_{0}^{\cdot}{k_{g_{s}}dA_{s}}$.
\end{enumerate}
\end{lem}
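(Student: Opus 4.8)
The plan is to read both assertions off the balayage formula of Proposition \ref{balpro} applied to $X$ itself along its own zero set, and then to sort the terms it produces into a process of the class $\mathcal{M}(H)$ and a finite-variation process carried by $H\cup\{X=0\}$; Proposition \ref{prop4} then promotes the resulting $\Sigma_{r}(H)$-decomposition to a $\Sigma^{r}(H)$-decomposition.

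For part 1, after a localization along stopping times that reduces $k$ to a bounded process, Proposition \ref{balpro} applies to the continuous semi-martingale $X$. Since $k$ is predictable its predictable projection coincides with $k$, and since $X_{0}=0$ the formula gives
$$k_{g_{t}}X_{t}=\int_{0}^{t}k_{g_{s}}\,dX_{s}+R_{t},$$
with $R$ continuous, adapted, of finite variation, and $dR$ carried by $\{s\geq0:X_{s}=0\}$; for a predictable integrand the classical balayage formula in fact gives $R\equiv0$, and establishing this (equivalently, that the finite-variation part isolated below is exactly $\Gamma$) is the single delicate point of the argument. Writing $X=M+A$ as in Definition \ref{d2} and using linearity of the stochastic integral, $\int_{0}^{t}k_{g_{s}}\,dX_{s}=\int_{0}^{t}k_{g_{s}}\,dM_{s}+\Gamma_{t}$ with $\Gamma_{t}=\int_{0}^{t}k_{g_{s}}\,dA_{s}$. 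The integrand $s\mapsto k_{g_{s}}$ (more precisely, the predictable integrand appearing in Proposition \ref{balpro}) is locally bounded and predictable, because $s\mapsto g_{s}$ is adapted and non-decreasing; hence $\int_{0}^{\cdot}k_{g_{s}}\,dM_{s}\in\mathcal{M}(H)$ by Remark \ref{R2.1}.

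It then remains to check that $d\Gamma$ is carried by $H\cup\{s\geq0:k_{g_{s}}X_{s}=0\}$. The observation that makes this work is the inclusion $\{X_{s}=0\}\subseteq\{k_{g_{s}}X_{s}=0\}$: indeed, $X_{s}=0$ forces $g_{s}=\sup\{u\leq s:X_{u}=0\}=s$, so $k_{g_{s}}X_{s}=k_{s}\cdot0=0$. Since by Definition \ref{d2} the measure $dA$ is carried by $H\cup\{X_{s}=0\}$, the measure $d\Gamma=k_{g_{s}}\,dA_{s}$ is carried by $H\cup\{X_{s}=0\}\subseteq H\cup\{k_{g_{s}}X_{s}=0\}$, that is, $\int 1_{H^{c}\cap\{k_{g_{s}}X_{s}\neq0\}}\,d\Gamma=0$. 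Combined with the previous step this exhibits $k_{g_{\cdot}}X=\int_{0}^{\cdot}k_{g_{s}}\,dM_{s}+\Gamma$ as an element of $\Sigma_{r}(H)$ with finite-variation part $\Gamma$, hence of $\Sigma^{r}(H)$ with the asserted finite-variation part by Proposition \ref{prop4}.

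For part 2 the scheme is identical, Proposition \ref{balpro} being replaced by its analogue for right-continuous semimartingales with bounded predictable integrand --- which is why boundedness, rather than mere local boundedness, is required there. Because the finite-variation components of $X$ are continuous, the jumps of $X$ are precisely those of its local-martingale part, so the corresponding balayage identity is again of the form $k_{g_{t}}X_{t}=\int_{0}^{t}k_{g_{s-}}\,dX_{s}+R_{t}$ with $R$ carried by $\{s\geq0:X_{s}=0\}$, and the remaining bookkeeping --- the identification of the $\mathcal{M}(H)$-part through Remark \ref{R2.1} and of the finite-variation part as $\int_{0}^{\cdot}k_{g_{s}}\,dA_{s}$ --- goes through word for word. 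I expect the main obstacles to be exactly the treatment of the balayage residual (its vanishing for a predictable integrand in the continuous case) and, in part 2, the passage to a balayage formula allowing jumps.
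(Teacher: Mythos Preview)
Your proposal is correct and follows essentially the same route as the paper: apply the predictable balayage formula to $X$, split $\int_0^{\cdot}k_{g_s}\,dX_s$ into $\int_0^{\cdot}k_{g_s}\,dM_s\in\mathcal{M}(H)$ (via Remark~\ref{R2.1}) plus $\Gamma=\int_0^{\cdot}k_{g_s}\,dA_s$, and check the support condition on $d\Gamma$ via the inclusion $\{X_s=0\}\subseteq\{k_{g_s}X_s=0\}$. The only cosmetic difference is that the paper invokes the predictable balayage formula directly (no residual $R$), whereas you go through Proposition~\ref{balpro} and then note that $R\equiv0$ for a predictable integrand; and the paper concludes $k_{g_\cdot}X\in\Sigma^{r}(H)$ in one line rather than passing through $\Sigma_r(H)$ and Proposition~\ref{prop4} as you do --- your extra care there is in fact warranted, since $dA$ is carried by $H\cup\{X=0\}$ rather than merely $\{X=0\}$.
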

\begin{proof}
According to Definition \ref{d2}, $X$ decomposes as $X=M+A$, where $M\in\mathcal{M}(H)$ and $A$ is a continuous process with finite variations. 
\begin{enumerate}
	\item By applying the balayage's formula in predictable case for continuous semi-martingales, we obtain the following:
$$k_{g_{t}}X_{t}=\int_{0}^{t}{k_{g_{s}}dX_{s}}=\int_{0}^{t}{k_{g_{s}}dM_{s}}+\int_{0}^{t}{k_{g_{s}}dA_{s}}.$$
But, we know from Remark \ref{R2.1} that $\int_{0}^{\cdot}{k_{g_{s}}dM_{s}}\in\mathcal{M}(H)$. In addition, it is obvious to see that $\Gamma=\int_{0}^{\cdot}{k_{g_{s}}dA_{s}}$ is a continuous process with finite variations such that $d\Gamma$ is carried by $\{t\geq0:k_{g_{t}}X_{t}=0\}$. Consequently, $k_{g_{\cdot}}X\in\Sigma^{r}(H)$.
  \item We obtain the result by proceeding of the same thing by applying the balayage's formula in predictable case for right continuous semimartingales.
\end{enumerate}
\end{proof}

\begin{lem}\label{l9}
Let $X=M+A$ be a continuous process of the class $\Sigma^{r}(H)$, and let $g_{t}=\sup\{s\leq t:X_{s}=0\}$. Hence, the following hold:
\begin{enumerate}
	\item for any bounded progressive process $k$, $k_{g_{\cdot}}X$ is an element of the class $\Sigma^{r}(H)$;
	\item for any càdlàg bounded progressive process $k$ which vanishes on $H$, $k_{g_{\cdot}}X$ is an element of the class $(\Sigma)$.
\end{enumerate}
\end{lem}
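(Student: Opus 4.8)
The plan is to feed the continuous semimartingale $X$ into the general balayage formula of Proposition \ref{balpro} and then split the outcome into an $\mathcal{M}(H)$-part and a finite-variation part, exactly as in the proof of Lemma \ref{l8}; the only new feature is that $k$ is merely progressive, so the remainder process $R$ and the predictable projection ${}^{p}k$ must be carried along.

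First I would note that $X_{0}=0$, since $M_{0}=0$ and $A_{0}=0$. Applying Proposition \ref{balpro} to $Y=X$ with the bounded progressive process $k$ thus gives
$$k_{g_{t}}X_{t}=\int_{0}^{t}{}^{p}k_{g_{s}}\,dM_{s}+\int_{0}^{t}{}^{p}k_{g_{s}}\,dA_{s}+R_{t},$$
where $R$ is continuous, adapted, of finite variation, with $dR$ carried by $\{s\ge 0:X_{s}=0\}$. Since ${}^{p}k_{g_{\cdot}}$ is predictable, Remark \ref{R2.1} gives $\int_{0}^{\cdot}{}^{p}k_{g_{s}}\,dM_{s}\in\mathcal{M}(H)$. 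It then suffices to check that $\Gamma:=\int_{0}^{\cdot}{}^{p}k_{g_{s}}\,dA_{s}+R$ is a continuous finite-variation process whose Stieltjes measure is carried by $\{s:k_{g_{s}}X_{s}=0\}$. The key elementary observation is that $X_{s}=0$ forces $g_{s}=s$, whence $k_{g_{s}}X_{s}=k_{s}\cdot 0=0$; thus $\{X_{s}=0\}\subseteq\{k_{g_{s}}X_{s}=0\}$, and since both $dR$ and ${}^{p}k_{g_{s}}\,dA_{s}$ are carried by $\{X_{s}=0\}$, so too is $d\Gamma$ carried by $\{k_{g_{s}}X_{s}=0\}$. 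By Definition \ref{d2} this exhibits $k_{g_{\cdot}}X$ as an element of $\Sigma_{r}(H)$, hence of $\Sigma^{r}(H)$ by Proposition \ref{prop4}, which proves (1).

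For (2), $k$ is c\`adl\`ag and vanishes on $H$. By the remark following Proposition \ref{balpro}, the continuity of $X$ lets us replace ${}^{p}k_{g_{s}}$ by $k_{s}$, so
$$k_{g_{t}}X_{t}=\int_{0}^{t}k_{s}\,dM_{s}+\int_{0}^{t}k_{s}\,dA_{s}+R_{t}.$$
Writing $M=m+v$ with $m$ a c\`adl\`ag local martingale and $dv$ carried by $H$, the continuity of $v$ together with $k\equiv 0$ on $H$ gives $\int_{0}^{\cdot}k_{s}\,dv_{s}=0$, so $\int_{0}^{\cdot}k_{s}\,dM_{s}=\int_{0}^{\cdot}k_{s}\,dm_{s}$ is a genuine local martingale null at $0$ (this is precisely the case of Remark \ref{R2.1} in which the integrand vanishes on $H$). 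As in part (1), $\Gamma:=\int_{0}^{\cdot}k_{s}\,dA_{s}+R$ is continuous with $d\Gamma$ carried by $\{X_{s}=0\}\subseteq\{k_{g_{s}}X_{s}=0\}$. Hence $k_{g_{\cdot}}X$ is the sum of a local martingale null at $0$ and a continuous finite-variation process whose measure sits on its own zero set, i.e.\ $k_{g_{\cdot}}X\in(\Sigma)$.

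I do not anticipate a genuine obstacle; the argument parallels that of Lemma \ref{l8}. The only points needing care are the two appeals to Remark \ref{R2.1}: in (1), that ${}^{p}k_{g_{\cdot}}$ is an admissible predictable integrand so that $\int_{0}^{\cdot}{}^{p}k_{g_{s}}\,dM_{s}$ remains in $\mathcal{M}(H)$; and in (2), that the finite-variation component of $M$ is annihilated because $k$ vanishes on $H$ while $dv$ is carried by $H$ (with continuity of $v$ used to identify $\int k_{s}\,dv_{s}$ with $\int k_{s-}\,dv_{s}$). Everything else reduces to the inclusion $\{X_{s}=0\}\subseteq\{k_{g_{s}}X_{s}=0\}$, which keeps the finite-variation part on the correct set.
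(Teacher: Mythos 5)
Your proof is correct and follows essentially the same route as the paper: the progressive balayage formula of Proposition \ref{balpro}, the split of $\int_{0}^{\cdot}{}^{p}(k_{g_{s}})\,dX_{s}+R$ into an $\mathcal{M}(H)$ part and a finite-variation part carried by $\{X_{s}=0\}\subseteq\{k_{g_{s}}X_{s}=0\}$, and for (2) the identification ${}^{p}(k_{g_{s}})=k_{s-}=k_{s}$ together with the vanishing of $k$ on $H$ to kill the $dv$ integral. You simply spell out the details (the appeal to Remark \ref{R2.1} and the zero-set inclusion) that the paper's own proof leaves implicit.
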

\begin{proof}
By applying the balayage's formula for the progressive case, we obtain the following:
$$k_{g_{t}}X_{t}=\int_{0}^{t}{^{p}(k_{g_{s}})dX_{s}}+R_{t},$$
where ${^{p}(k_{g_{s}})}$ is the predictable projection of $k_{g_{s}}$ and $R$ is a continuous process with finite variations such $dR$ is carried by $\{t\geq0:X_{t}=0\}$ and $R_{0}=0$. This implies that:
$$k_{g_{t}}X_{t}=\int_{0}^{t}{^{p}(k_{g_{s}})dM_{s}}+\int_{0}^{t}{^{p}(k_{g_{s}})dA_{s}}+R_{t}.$$

\begin{enumerate}
	\item It can be seen at this level that $k_{g_{\cdot}}X$ is an element of the class $\Sigma^{r}(H)$.
	\item Since $k$ is càdlàg, we have $^{p}(k_{g_{s}})=k_{s-}$. Hence, we obtain:
	$$k_{g_{t}}X_{t}=\int_{0}^{t}{k_{s-}dM_{s}}+\int_{0}^{t}{k_{s-}dA_{s}}+R_{t}.$$
	Which gives:
	$$k_{g_{t}}X_{t}=\int_{0}^{t}{k_{s}dM_{s}}+\int_{0}^{t}{k_{s}dA_{s}}+R_{t}$$
	because $M$ and $A$ are continuous processes. However, $\int_{0}^{\cdot}{k_{s}dM_{s}}$ is a local martingale since $k$ vanishes on $H$. This completes the proof.
\end{enumerate}
\end{proof}	

\begin{lem}\label{l10}
Let $X=M+A$ and $Y=M^{'}+A^{'}$ be two none negative and continuous processes of the class $\Sigma^{r}(H)$ such $X=Y$ on $H$. Hence, the process  $Z=\min\{X,Y\}$ is also an element of the class $\Sigma^{r}(H)$.
\end{lem}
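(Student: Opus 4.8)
The plan is to use the classical identity $\min\{X,Y\}=\tfrac{1}{2}(X+Y)-\tfrac12|X-Y|$ and to analyse the three pieces separately. First I would set $U=X-Y$. Writing $X=M+A$ and $Y=M'+A'$ with $M,M'\in\mathcal M(H)$ and $A,A'$ continuous finite variation processes carried by the zero sets of $X$ and $Y$ respectively, we get $U=(M-M')+(A-A')$, which is a continuous semimartingale. The hypothesis that $X=Y$ on $H$ forces $U$ to vanish on $H$; this is the structural fact that will make the $|U|$ term cooperate.

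Next I would apply Tanaka's formula to $|U|$:
$$|U_{t}|=\int_{0}^{t}\mathrm{sign}(U_{s})\,dU_{s}+L_{t}^{0}(U)
=\int_{0}^{t}\mathrm{sign}(U_{s})\,d(M-M')_{s}+\int_{0}^{t}\mathrm{sign}(U_{s})\,d(A-A')_{s}+L_{t}^{0}(U).$$
Since $M-M'\in\mathcal M(H)$ and $U$ vanishes on $H$, Remark \ref{R2.1} tells us that $\int_{0}^{\cdot}\mathrm{sign}(U_{s})\,d(M-M')_{s}$ is a local martingale; in particular $|U|$ is a (continuous) semimartingale whose martingale part is a genuine local martingale. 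Combining with $X+Y=(M+M')+(A+A')$, where $M+M'\in\mathcal M(H)$, we obtain
$$Z=\min\{X,Y\}=\frac12(M+M')-\frac12\int_{0}^{\cdot}\mathrm{sign}(U_{s})\,d(M-M')_{s}
+\frac12\int_{0}^{\cdot}(dA_{s}+dA'_{s})-\frac12\int_{0}^{\cdot}\mathrm{sign}(U_{s})\,d(A-A')_{s}-\frac12 L_{\cdot}^{0}(U).$$
Grouping terms, the candidate martingale-with-$H$-drift part of $Z$ is $N:=\frac12(M+M')-\frac12\int_{0}^{\cdot}\mathrm{sign}(U_{s})\,d(M-M')_{s}$, which lies in $\mathcal M(H)$ by Remark \ref{R2.1}, and the candidate finite variation part is $\Gamma:=\frac12\int_{0}^{\cdot}(dA_{s}+dA'_{s})-\frac12\int_{0}^{\cdot}\mathrm{sign}(U_{s})\,d(A-A')_{s}-\frac12 L_{\cdot}^{0}(U)$, which is continuous and of finite variation and vanishes at $0$.

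It remains to check that $d\Gamma$ is carried by $\{t\ge0:Z_{t}=0\}$, i.e. by $\{X=0\}\cup\{Y=0\}$. For the $dA$ and $dA'$ contributions this is immediate since they are carried by $\{X=0\}$ and $\{Y=0\}$ respectively. For the local time term, $dL^{0}(U)$ is carried by $\{U=0\}=\{X=Y\}$; on that set $Z=X=Y$, so I must rule out the part of $\{X=Y\}$ on which $X=Y\neq0$. There $\mathrm{sign}(U)$ is locally constant, so locally $U$ is a semimartingale of the form (local martingale)$+$(finite variation), and on any excursion interval of $U$ away from a zero of $X$ (equivalently of $Y$) the local time $L^{0}(U)$ does not increase; more directly, $L^{0}(U)$ only increases at times that are limits of zeros of $U$, and one checks $\{U=0\}\cap\{X\neq0\}$ cannot support $dL^{0}(U)$ because in a neighbourhood of such a time $U$ equals a difference of two processes carried on $H$-complement behaviour... — this is the delicate point. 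The cleanest route is: on $\{Z\neq0\}=\{X\neq0\}\cap\{Y\neq0\}$ we have $dA=dA'=0$, and there $Z=\min\{X,Y\}$ with both arguments nonzero, so $Z$ agrees locally with whichever of $X,Y$ is smaller, making $Z$ locally an element of $\mathcal M(H)$ there, hence $d\Gamma=0$ on $\{Z\neq0\}$; equivalently the three terms of $\Gamma$ must cancel on $\{Z\neq0\}$. I expect this verification — showing $dL^{0}(U)\,$ restricted to $\{Z\neq0\}$ is killed by the cancellation with the $\frac12(dA+dA')-\frac12\mathrm{sign}(U)d(A-A')$ terms, or equivalently a direct excursion-theoretic argument that $L^{0}(U)$ charges only $\{X=Y=0\}$ — to be the main obstacle, and I would handle it by localising on the open set $\{Z\ne0\}$ and using that $X-Y$ there has constant sign so $|X-Y|$ is itself a semimartingale with no local-time contribution.
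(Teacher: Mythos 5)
Your decomposition $Z=\tfrac12(X+Y)-\tfrac12|X-Y|$ followed by Tanaka's formula is exactly the route the paper takes, and you have correctly isolated the one step on which everything hinges: where the measure $dL^{0}(X-Y)$ is carried. But none of the arguments you sketch for that step actually work, and the claim you need is false in general. On the open set where $X\neq0$, $Y\neq0$ and $t\notin H$, the process $U=X-Y$ is locally of the form (local martingale)$+$(constant), and a continuous local martingale with nondegenerate quadratic variation does accumulate local time at $0$ each time it visits $0$; so $dL^{0}(U)$ can perfectly well charge $\{X=Y\neq0\}\setminus H$. Your ``constant sign'' argument breaks down precisely on $\{U=0\}$, which is the only set relevant to $dL^{0}(U)$. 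A concrete counterexample to the statement itself: take $D\equiv1$, so that $H=\emptyset$, $\Sigma^{r}(H)=(\Sigma)$, and the hypothesis ``$X=Y$ on $H$'' is vacuous; take $X=|B|$ and $Y=|B'|$ with $B,B'$ independent Brownian motions. Since the planar Brownian motion $(B,B')$ never returns to the origin, $dL^{0}(|B|-|B'|)$ lives entirely on $\{|B|=|B'|\neq0\}$ and is nonzero, so the (unique, by the paper's own uniqueness corollary for nonnegative processes) finite-variation part of $\min\{|B|,|B'|\}$ is charged off $\{Z=0\}\cup H$: the minimum is not in $(\Sigma)$.

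You should not be troubled that you could not close this step: the paper's own proof disposes of it with the single sentence that $dL^{0}(X-Y)$ is carried by $H$ ``because $X=Y$ on $H$'', which is the wrong direction of inclusion --- that hypothesis gives $H\subseteq\{X=Y\}$, whereas what is needed is that $\{X=Y\}$ is contained, up to $dL^{0}$-null sets, in $H\cup\{Z=0\}$. So the paper shares the gap you identified, and the lemma requires a stronger hypothesis (for instance $\{t\geq0:X_{t}=Y_{t}\}\subseteq H\cup\{Z=0\}$, or $X\leq Y$ off $H$) before either your argument or the paper's can be completed.
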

\begin{proof}
We have:
$$\hspace{-7.75cm}2Z_{t}=X_{t}+Y_{t}-|X_{t}-Y_{t}|$$
$$=\left(M_{t}+M^{'}_{t}-\int_{0}^{t}{sign(X_{s}-Y_{s})d(M_{s}-M^{'}_{s})}-L_{t}^{0}(X-Y)\right)+\Gamma_{t},$$
where $\Gamma_{t}=\int_{0}^{t}{[1-sign(X_{s}-Y_{s})]dA_{s}}+\int_{0}^{t}{[1+sign(X_{s}-Y_{s})]dA^{'}_{s}}$. Remark that $dL_{\cdot}^{0}(X-Y)$ is carried by $H$ because $X=Y$ on $H$. In addition, we can see from Remark \ref{R2.1} that
$$\left(M_{t}+M^{'}_{t}-\int_{0}^{t}{sign(X_{s}-Y_{s})d(M_{s}-M^{'}_{s})}-L_{t}^{0}(X-Y)\right)_{t\geq0}\in\mathcal{M}(H).$$
On another hand, we have $\{t\geq0:Z_{t}=0\}=\{t\geq0:X_{t}=0\}\cup\{t\geq0:Y_{t}=0\}$. Hence,
$$\int{1_{\{Z_{s}\neq0\}}dA_{s}}=\int{1_{\{Y_{s}\neq0\}}1_{\{X_{s}\neq0\}}dA_{s}}=0$$
since, $1_{\{X_{s}\neq0\}}dA_{s}=0$. In the same way, we prove that 
$$\int{1_{\{Z_{s}\neq0\}}dA^{'}_{s}}=0.$$
Consequently, we deduce that $d\Gamma$ is carried by $\{t\geq0:Z_{t}=0\}$. This concludes the proof.
\end{proof}

%%%%%%%%%%%%%%%%%%%%%%%%%%%%%%%%%%%%%%%%%%%%%%%%%%%%%%%%%%%%%%%%%%%%%%%%%%%%%%%%%%%%%%%%%%%%%%%%%%%%%%%%%%%%%%%%%%%%%%%%%%%%%%%%%%%%%%%%%%%%%%%%%%%%%%%%%%%%%%%%%%%%%
\section{Characterization results}\label{sec:3}
Now, we shall derive a series of results characterizing processes of the class $\Sigma^{r}(H)$. We start by results which characterize continuous processes of this class.

\subsection{Characterization results for continuous processes} 
\begin{theorem}\label{th1}
Let $X$ be a continuous process. Then, $$X\in \Sigma^{r}(H)\Leftrightarrow |X|\in \Sigma^{r}(H).$$
\end{theorem}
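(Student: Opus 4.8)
The plan is to prove each implication by exhibiting an explicit $\Sigma^{r}(H)$-decomposition of one process given that of the other, using Tanaka's formula to pass between $X$ and $|X|$ and carefully tracking which set carries each finite-variation term.

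For the forward implication, I would start from a decomposition $X=m+v+A$ with $m$ a continuous local martingale (here $X$ is continuous, so $m$ is continuous), $dv$ carried by $H$ and $dA$ carried by $\{X=0\}$. Applying Tanaka's formula to $|X|$ gives
$$|X_{t}| = \int_{0}^{t}{\mathrm{sign}(X_{s})\,dX_{s}} + L_{t}^{0}(X) = \int_{0}^{t}{\mathrm{sign}(X_{s})\,dm_{s}} + \int_{0}^{t}{\mathrm{sign}(X_{s})\,dv_{s}} + \int_{0}^{t}{\mathrm{sign}(X_{s})\,dA_{s}} + L_{t}^{0}(X).$$
Now $\int_{0}^{t}{\mathrm{sign}(X_{s})\,dA_{s}} = 0$ because $dA$ is carried by $\{X=0\}$, where $\mathrm{sign}(0)=0$ (with the usual convention). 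So the candidate decomposition is: local martingale part $\widetilde m = \int_{0}^{\cdot}{\mathrm{sign}(X_{s})\,dm_{s}}$, the term $\widetilde v = \int_{0}^{\cdot}{\mathrm{sign}(X_{s})\,dv_{s}}$ whose differential is still carried by $H$ (multiplying by a bounded process preserves the support), and $\widetilde A = L^{0}(X)$ whose differential is carried by $\{X=0\}=\{|X|=0\}$. Since $L^{0}(X)$ is continuous, increasing, null at zero, this shows $|X|\in\Sigma^{r}(H)$. I should also remark that $\widetilde m$ is a genuine continuous local martingale, so in fact $|X|$ is a continuous process of the class with the required properties.

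For the converse, suppose $|X|\in\Sigma^{r}(H)$, say $|X|=n+w+B$ with $n$ a continuous local martingale, $dw$ carried by $H$, $dB$ carried by $\{|X|=0\}=\{X=0\}$. The idea is to recover $X$ by "unfolding" $|X|$ with a sign process: since $X$ is a continuous semimartingale, on each excursion interval away from $0$ its sign is constant, and one has $X_{t} = \mathrm{sign}(X_{g_{t}^{+}}\!)\,|X_{t}|$ — more cleanly, one uses that $X = \varepsilon_{\cdot}|X|$ where $\varepsilon$ is an appropriate predictable (or progressively measurable) process equal to $\pm 1$ on excursions, constructed from the excursion decomposition described in Section~\ref{sec:1}; alternatively one invokes the balayage formula (Proposition~\ref{balpro}) applied to $|X|$ with the sign process, exactly as in Lemma~\ref{l9}. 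Writing $X_{t}=\int_{0}^{t}{\varepsilon_{s}\,d|X|_{s}} + (\text{a finite variation correction carried by } \{X=0\})$, we substitute $d|X| = dn + dw + dB$: the term $\int\varepsilon_{s}\,dn_{s}$ is a continuous local martingale, $\int\varepsilon_{s}\,dw_{s}$ has differential carried by $H$, and both $\int\varepsilon_{s}\,dB_{s}$ and the correction term have differentials carried by $\{X=0\}$. Hence $X\in\Sigma^{r}(H)$.

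The main obstacle is the converse direction: making rigorous the passage from $|X|$ back to $X$, i.e. justifying the existence and measurability of the sign process $\varepsilon$ and the form of the balayage correction term. The subtlety is that $\mathrm{sign}(X)$ is not predictable in general and one must be careful about the value at the zero set and about the excursion straddling a fixed time; the clean way is to apply Proposition~\ref{balpro} (the general balayage formula) to the continuous semimartingale $|X|$ with $k$ chosen so that $k_{g_{t}}|X_{t}| = X_{t}$, after verifying $X$ itself is a continuous semimartingale — which follows since $|X|$ is and $X$ restricted to each excursion is $\pm|X|$. Once the balayage representation of $X$ is in hand, identifying the supports of the three resulting finite-variation pieces is routine given the hypotheses on $n$, $w$, $B$.
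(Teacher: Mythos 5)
Your proposal is correct and follows essentially the same route as the paper: Tanaka's formula for the forward implication (killing the $dA$ term via its support in $\{X=0\}$), and for the converse the balayage formula of Proposition \ref{balpro} applied to $|X|$ with a sign process $k$ satisfying $k_{g_t}|X_t|=X_t$ (the paper takes $k_t=\liminf_{s\to t}\{1_{X_s>0}-1_{X_s<0}\}$, which is the explicit form of your $\varepsilon$). The only difference is that the paper writes this sign process down concretely rather than leaving its construction as a remark, but the argument is the same.
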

\begin{proof}
$\Rightarrow)$ Let $X=m+v+A$ be a process of the class $\Sigma^{r}(H)$. We obtain from Tanaka's formulas that: 
$$\hspace{-4.5cm}|X_{t}|=\int^{t}_{0}{Sgn(X_{s})dX_{s}}+L^{0}_{t}(X)$$ 
$$\hspace{2cm}=\int^{t}_{0}{Sgn(X_{s})dm_{s}}+\int^{t}_{0}{Sgn(X_{s})dv_{s}}+\int^{t}_{0}{Sgn(X_{s})dA_{s}}+L_{t}^{0}(X).$$ 
Hence, 
$$|X_{t}|=\int^{t}_{0}{Sgn(X_{s})dm_{s}}+\int^{t}_{0}{Sgn(X_{s})dv_{s}}+L_{t}^{0}(X)$$
since $\int^{t}_{0}{Sgn(X_{s})dA_{s}}=0$ because $Supp(dA)\subset \{X=0\}$. Thus, $|X|\in \Sigma^{r}(H)$.\\ 
$\Leftarrow)$ Now, we assume that $|X|\in \Sigma^{r}(H)$. This means that $|X|=m+v+A$ where $m$ is a local martingale and $v$ and $A$ are continuous finite variation processes such $dv$ and $dA$ are carried respectively by $H$ and $\{t\geq0:X_{t}=0\}$. Let us put 
$$k_{t}=\liminf_{s\rightarrow t}\left\{1_{X_{s}>0}-1_{X_{s}<0}\right\} \text{ and } g_{t}=\sup\left\{s\leq t: X_{s}=0\right\}.$$ 
It comes from Balayage's formula that: 
$$k_{g_{t}}|X_{t}|=\int^{t}_{0}{^{p}(k_{g_{s}})d|X_{s}|}+R_{t}$$
where $R$is an increasing and continuous process such as $Supp(dR_{t})\subset \left\{|X_{t}|=0\right\}=\left\{X_{t}=0\right\}$. But, 
$$\int^{t}_{0}{^{p}(k_{g_{s}})d|X_{s}|}=\int^{t}_{0}{k_{s-}d|X_{s}|}=\int^{t}_{0}{k_{s}d|X_{s}|}$$
since $|X|$ is continuous and $k$ is a càdlàg progressive process. Hence, 
$$X_{t}=\int^{t}_{0}{k_{s}dm_{s}}+\int^{t}_{0}{k_{s}dv_{s}}+\left[\int^{t}_{0}{k_{s}dA_{s}}+R_{t}\right].$$ 
However, 
$$\int^{t}_{0}{k_{s}dA_{s}}=0 \text{ because } Supp(dA)\subset \left\{t\geq 0: X_{t}=0\right\}.$$
This completes the proof.
\end{proof}

\begin{theorem}\label{th2}
A continuous semi-martingale $X$ is an element of the class $\Sigma^{r}(H)$ if, and only if, there exists a process $W$ of the class $\mathcal{M}(H)$ such that 
$|X|=|W|$.
\end{theorem}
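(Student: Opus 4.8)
The statement splits into the two implications, and I would handle them with very different tools: the forward implication by an explicit construction based on the balayage identity of Bouhadou and Ouknine (Proposition \ref{pzalph}) with $\alpha=\tfrac12$, and the converse by reducing it to Theorem \ref{th1}.

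\emph{The implication $\Rightarrow$.} Assume $X\in\Sigma^{r}(H)$ is continuous. By Theorem \ref{th1}, $|X|\in\Sigma^{r}(H)$, so I would write $|X|=m+v+A$ as in Definition \ref{d1}; since $X$, hence $|X|$, is continuous, $m=|X|-v-A$ is a \emph{continuous} local martingale. Let $g_{t}=\sup\{s\leq t:X_{s}=0\}$ and let $\epsilon=Z^{|X|}$ be the $\pm1$-valued process of \eqref{zalpha} built on the excursion intervals $]g_{n},d_{n}[$ of $|X|$ away from $0$, taken with the constant $\alpha=\tfrac12$. Then I would set $W:=\epsilon_{\cdot}|X|$. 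Applying Proposition \ref{pzalph} to the continuous semi-martingale $Y=|X|$, and using that $2\alpha-1=0$, gives
\[
W_{t}=\epsilon_{t}|X|_{t}=\int_{0}^{t}{\epsilon_{s}\,d|X|_{s}}=\int_{0}^{t}{\epsilon_{s}\,dm_{s}}+\int_{0}^{t}{\epsilon_{s}\,dv_{s}}+\int_{0}^{t}{\epsilon_{s}\,dA_{s}}.
\]
The last integral vanishes: $dA$ is carried by $\{|X|=0\}$, and on that set $\epsilon\equiv0$ since $\epsilon$ is supported on the open excursion intervals. The process $\int_{0}^{\cdot}{\epsilon_{s}\,dm_{s}}$ is a continuous local martingale — this is where continuity of $m$ is used, so that the merely progressively measurable (not predictable) integrand $\epsilon$ is still admissible — and $\int_{0}^{\cdot}{\epsilon_{s}\,dv_{s}}$ is a continuous finite-variation process whose differential $\epsilon_{s}\,dv_{s}$ is carried by $H$. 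Hence $W\in\mathcal{M}(H)$ (with $W_{0}=0$), while $|W_{t}|=|\epsilon_{t}|\,|X_{t}|=|X_{t}|$, because $|\epsilon_{t}|=1$ whenever $X_{t}\neq0$ and $W_{t}=0=|X_{t}|$ whenever $X_{t}=0$.

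\emph{The implication $\Leftarrow$.} Assume there is $W\in\mathcal{M}(H)$, $W=m+v$, with $|W|=|X|$. Since $\mathcal{M}(H)\subset\Sigma^{r}(H)$, applying Tanaka's formula to $W$ and simplifying (the jump contributions cancel because $|W|=|X|$ is continuous) yields
\[
|X|_{t}=|W|_{t}=\int_{0}^{t}{\mathrm{sgn}(W_{s-})\,dm_{s}}+\int_{0}^{t}{\mathrm{sgn}(W_{s-})\,dv_{s}}+L^{0}_{t}(W),
\]
where the first term is a local martingale, the second a finite-variation process carried by $H$ (as $dv$ is), and $L^{0}(W)$ is increasing and carried by $\{W=0\}=\{X=0\}$. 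Thus $|X|\in\Sigma^{r}(H)$, and Theorem \ref{th1} gives $X\in\Sigma^{r}(H)$.

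\emph{Main obstacle.} The delicate part is the forward implication, namely verifying that $W=Z^{|X|}|X|$ really lands in $\mathcal{M}(H)$. Three points must be controlled simultaneously: the local-time correction term in Proposition \ref{pzalph} has to be annihilated, which forces the choice $\alpha=\tfrac12$; the stochastic integral $\int\epsilon\,dm$ must remain a local martingale even though $\epsilon$ is only progressive, which relies on the continuity of $m$ (a free consequence of the continuity of $X$); and $\int\epsilon\,dv$ must keep its differential supported on $H$. Once these are settled, the remaining verifications — that $\epsilon\,dA\equiv0$ and that $|W|=|X|$ — are just bookkeeping with the supports of $dv$, $dA$ and $dL^{0}$.
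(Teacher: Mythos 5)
Your proposal is correct and takes essentially the same route as the paper: the forward implication is exactly the paper's construction of $W$ as $Z^{\alpha}X$ via Proposition \ref{pzalph} with $\alpha=\tfrac12$ (you apply the identity to $|X|$ after first invoking Theorem \ref{th1}, whereas the paper applies it directly to the decomposition $X=M+A$ — an immaterial difference since $\{X=0\}=\{|X|=0\}$ so the excursion intervals coincide), and the converse is the paper's own reduction, via Tanaka's formula (Example \ref{ex3}) and Theorem \ref{th1}.
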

\begin{proof}
$\Rightarrow)$ Let us assume that $X\in\Sigma^{r}(H)$. That is, there exist $M\in\mathcal{M}(H)$ and a finite variations process $A$ such that $dA$ is carried by $\{t\geq0:X_{t}=0\}$ and $X=M+A$. Now, let $Z^{\alpha}$ be the process defined in \eqref{zalpha} and constructed with respect to $X$. Hence, we obtain from Proposition \ref{pzalph} that:
$$Z^{\alpha}_{t}X_{t}=\int_{0}^{t}{Z^{\alpha}_{s}dX_{s}}+(2\alpha-1)L_{t}^{0}(Z^{\alpha}X)$$
$$\hspace{3.25cm}=\int_{0}^{t}{Z^{\alpha}_{s}dM_{s}}+\int_{0}^{t}{Z^{\alpha}_{s}dA_{s}}+(2\alpha-1)L_{t}^{0}(Z^{\alpha}X)$$
$$\hspace{1.25cm}=\int_{0}^{t}{Z^{\alpha}_{s}dM_{s}}+(2\alpha-1)L_{t}^{0}(Z^{\alpha}X)$$
since $\int_{0}^{t}{Z^{\alpha}_{s}dA_{s}}=0$. Indeed, $dA$ is carried by $\{t\geq0:X_{t}=0\}$ and $\{Z^{\alpha}_{s}=0\}=\{X_{s}=0\}$. In particular case where $\alpha=\frac{1}{2}$, we obtain:
$$Z^{\alpha}_{t}X_{t}=\int_{0}^{t}{Z^{\alpha}_{s}dM_{s}}.$$
But, we have from Remark \ref{R2.1} that $W=\int_{0}^{\cdot}{Z^{\alpha}_{s}dM_{s}}\in\mathcal{M}(H)$. Furthermore, $Z^{\alpha}_{t}\in\{-1,0,1\}$ and $\{Z^{\alpha}_{s}=0\}=\{X_{s}=0\}$. Consequently, $|X|=|W|$.\\
$\Leftarrow)$ Now, consider that there exists a process $W$ of the class $\mathcal{M}(H)$ such that $|X|=|W|$. Hence, we have according ton Example \ref{ex3} that $|X|\in\Sigma^{r}(H)$. Consequently, we obtain from Theorem \ref{th1} that $X\in\Sigma^{r}(H)$. Which completes the proof.
\end{proof}

\begin{rem}\label{rem3.1}
According to Theorem \ref{th2}, we know now that for any positive process $X$ of the class  $\Sigma^{r}(H)$, there exists a process $W$ of the class $\mathcal{M}(H)$ such that $X=|W|$. In addition, we have also obtained in the above proof that $\forall t\geq0$, $W_{t}=\int_{0}^{t}{Z^{\alpha}_{s}dX_{s}}$ with $\alpha=0,5$.
\end{rem}

Bouhadou and Ouknine have shown in Proposition 2.3 of \cite{siam}, a similar result for processes of the class $(\Sigma)$. We obtain this result in next.
\begin{coro}
If $X$ is a continuous and positive process of the class $(\Sigma)$, then there exists a continuous martingale $W$ such that $X=|W|$.
\end{coro}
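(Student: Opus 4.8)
The plan is to specialize the construction underlying Theorem~\ref{th2} to the situation where the $v$-component of the decomposition is absent. First I would record that a continuous process $X$ of the class $(\Sigma)$ is in particular a continuous, positive element of $\Sigma^{r}(H)$: writing the class $(\Sigma)$ decomposition $X=m+A$, with $m$ a c\`adl\`ag local martingale and $A$ a continuous finite variation process such that $dA$ is carried by $\{X=0\}$, we may write $X=m+v+A$ with $v\equiv 0$, in which case the constraint $\int 1_{H^{c}}dv=0$ is vacuous and holds for $H$ the zero set of an arbitrary continuous martingale $D$. Moreover, since $X$ and $A$ are continuous, $m=X-A$ is a \emph{continuous} local martingale.

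Next, since $X$ is positive and continuous and belongs to $\Sigma^{r}(H)$, Theorem~\ref{th2} together with Remark~\ref{rem3.1} produces a process $W\in\mathcal{M}(H)$ with $X=|W|$ and $W_{t}=\int_{0}^{t}Z^{X}_{s}\,dX_{s}$, where $Z^{X}$ is the process \eqref{zalpha} built from $X$ with Bernoulli parameter $\alpha=\tfrac12$; equivalently, applying Proposition~\ref{pzalph} directly to $Y=X$ with $\alpha=\tfrac12$ makes the local time term disappear (as $2\alpha-1=0$) and gives $W_{t}=Z^{X}_{t}X_{t}$. Splitting the integrator,
$$W_{t}=\int_{0}^{t}Z^{X}_{s}\,dm_{s}+\int_{0}^{t}Z^{X}_{s}\,dA_{s}=\int_{0}^{t}Z^{X}_{s}\,dm_{s},$$
the second integral vanishing because $dA$ is carried by $\{X_{s}=0\}=\{Z^{X}_{s}=0\}$. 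Hence $W=\int_{0}^{\cdot}Z^{X}_{s}\,dm_{s}$ is the stochastic integral of a bounded progressive process against the continuous local martingale $m$, so $W$ is itself a continuous local martingale, and $|W_{t}|=|Z^{X}_{t}|\,X_{t}=X_{t}$ since $X\geq 0$, $|Z^{X}_{t}|=1$ whenever $X_{t}\neq 0$, and both sides vanish on $\{X=0\}$.

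The only point requiring a little attention is the one already handled in the proof of Theorem~\ref{th2} and in Remark~\ref{R2.1}: the integrand $Z^{X}$ is merely progressively measurable, not predictable, so one works with its predictable projection, which leaves the integral unchanged because $m$ is continuous; this is what upgrades $W$ from a generic element of $\mathcal{M}(H)$ to an honest continuous local martingale. I do not anticipate any genuine obstacle, the corollary being essentially the restriction of Theorem~\ref{th2} to processes vanishing on $H$ whose $\mathcal{M}(H)$-part already reduces to a local martingale.
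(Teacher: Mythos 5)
Your proposal is correct and follows essentially the same route as the paper: invoke Theorem~\ref{th2} and Remark~\ref{rem3.1} with $\alpha=\tfrac12$ to get $W_t=\int_0^t Z_s\,dX_s$ with $X=|W|$, then use the class $(\Sigma)$ decomposition $X=m+A$ and the fact that $dA$ is carried by $\{X=0\}=\{Z=0\}$ to reduce $W$ to $\int_0^{\cdot}Z_s\,dm_s$, a continuous local martingale. You merely make explicit a few points the paper leaves implicit (continuity of $m$, the progressive-versus-predictable issue for the integrand), which is fine.
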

\begin{proof}
Indeed we have from Theorem \ref{th2} and Remark \ref{rem3.1} that $X=|W|$, where $\forall t\geq0$, $W_{t}=\int_{0}^{t}{Z^{\alpha}_{s}dX_{s}}$ with $\alpha=0,5$. However, $X\in(\Sigma)$. Then, it decomposes as $X=m+A$ where $m$ is a local martingale and $A$ is a continuous finite variation process such that $dA$ is carried by $\{t\geq0:X_{t}=0\}$. Hence, we get that $W_{t}=\int_{0}^{t}{Z^{\alpha}_{s}dm_{s}}$ since $\{t\geq0:X_{t}=0\}=\{t\geq0:Z^{\alpha}_{t}=0\}$. This concludes the proof.
\end{proof}
%%%%%%%%%%%%%%%%%%%%%%%%%%%%%%%%%%%%%%%%%%%%%%%%%%%%%%%%%%%%%%%%%%%%%%%%%%%%%%%%%%%%%%%
\subsection{Characterization results for càdlàg processes} 

Now, we shall derive characterize results for càdlàg stochastic processes of the class $\Sigma^{r}(H)$. Thus, we start with processes whose the finite variational part is considered continuous.
\begin{theorem}\label{th3}
Let $X$ be a càdlàg non-negative semi-martingale. The following are equivalent:
\begin{enumerate}
	\item $X\in \Sigma^{r}(H)$; 	
	\item There exists a non-decreasing continuous process $V$ such that for any locally bounded Borel function $f:\R_{+}\rightarrow\R_{+}$ and defining $F(x)=\int_{0}^{x}{f(y)dy}$, the process $\left(f(V_{t})X_{t}-F(V_{t})\right)_{t\geq 0}$ is an element of the class $\mathcal{M}(H)$. 
\end{enumerate}
\end{theorem}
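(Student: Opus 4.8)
The plan is to prove the two implications separately, using the characterization of the class $\Sigma^{r}(H)$ via Definition \ref{d2} (i.e. $X = M + A$ with $M \in \mathcal{M}(H)$ and $A$ continuous with $dA$ carried by $\{X=0\}$) together with the corollaries of Lemma \ref{l1}, particularly Corollary \ref{c14}. The natural candidate for the non-decreasing process $V$ is the finite-variation part $A$ itself: by Remark \ref{r2.1}, since $X \geq 0$, the process $A$ is non-decreasing, and by Definition \ref{d2} it is continuous and null at zero.

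For the implication $(1)\Rightarrow(2)$, I would take $V = A$. Given a locally bounded Borel function $f \geq 0$ with antiderivative $F$, I want to show $f(A_t)X_t - F(A_t)$ lies in $\mathcal{M}(H)$. First apply Corollary \ref{c14}: $f(A)X - \int_0^\cdot f(A_s)\,dA_s$ is a process of the class $\mathcal{M}(H)$. Then I must identify $\int_0^t f(A_s)\,dA_s$ with $F(A_t)$. Since $A$ is continuous and of finite variation and $F' = f$, the change-of-variables (Stieltjes chain rule) gives $F(A_t) - F(A_0) = \int_0^t f(A_s)\,dA_s$, and $A_0 = 0$, $F(0)=0$ by construction. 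Hence $f(A_t)X_t - F(A_t) = f(A)X - \int_0^\cdot f(A_s)\,dA_s \in \mathcal{M}(H)$, as required. One technical point to handle carefully is whether $f(A)X$ is genuinely a semimartingale to which Corollary \ref{c14} applies when $f$ is merely locally bounded Borel rather than, say, continuous — I would either cite that $A$ is of finite variation so $f(A)$ is again of finite variation for Borel $f$, or approximate.

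For the converse $(2)\Rightarrow(1)$, I would exploit the freedom in choosing $f$. Taking $f \equiv 1$ gives $F(x) = x$, so $X_t - V_t \in \mathcal{M}(H)$; write $X = (X - V) + V$ with $M := X - V \in \mathcal{M}(H)$. It remains to show $dV$ is carried by $\{X = 0\}$, i.e. $\int 1_{\{X_s \neq 0\}}\,dV_s = 0$. To extract this, I would take $f$ to be (an approximation of) the indicator-type function $f = 1_{(0,\infty)}$, or more robustly functions $f_n \uparrow 1_{(0,\infty)}$, and compare the resulting $\mathcal{M}(H)$-processes $f_n(V_t)X_t - F_n(V_t)$ against the case $f\equiv 1$; subtracting, the local-martingale (indeed $\mathcal{M}(H)$) parts cancel against a finite-variation expression, forcing the finite-variation process $\int_0^t (1 - f_n(V_s))\,dV_s$ (plus correction terms) to be in $\mathcal{M}(H)$, hence identically zero since it is continuous of finite variation and null at zero. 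A cleaner route: since $V$ is continuous non-decreasing, on any excursion interval of $X$ away from $0$ one shows $X$ and $V$ satisfy a relation forcing $dV=0$ there; the support condition then follows because $X \geq 0$ and $X_t = 0$ precisely carries $dV$. I expect the main obstacle to be exactly this step — rigorously transferring the information ``$f(V)X - F(V) \in \mathcal{M}(H)$ for all such $f$'' into the pathwise statement that $dV$ charges only $\{X=0\}$ — since it requires a careful choice of test functions $f$ and an argument that a continuous finite-variation process in $\mathcal{M}(H)$ starting at $0$ must vanish (using that $\mathcal{M}(H)$ decomposes as local martingale plus finite-variation part supported on $H$, so a continuous FV process in $\mathcal{M}(H)$ is a continuous FV local martingale plus an $H$-supported part, and one argues the $H$-supported ambiguity is harmless for the zero set of $X$).
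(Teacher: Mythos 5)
Your forward implication $(1)\Rightarrow(2)$ is essentially the paper's argument: take $V=A$ from the decomposition $X=M+A$ of Definition \ref{d2} (non-decreasing by Remark \ref{r2.1}), show $f(A)X-\int_{0}^{\cdot}f(A_{s})\,dA_{s}\in\mathcal{M}(H)$, and identify $\int_{0}^{t}f(A_{s})\,dA_{s}=F(A_{t})$ by the Stieltjes chain rule; the paper gets the first step directly from the balayage formula with the predictable process $f(A_{s})=f(A_{g_{s}})$, which also disposes of your worry about merely Borel $f$. That half is fine.

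The converse direction has a genuine gap, on two counts. First, the target is misstated: you set out to prove that $dV$ is carried by $\{X=0\}$. That is strictly stronger than what membership in $\Sigma^{r}(H)$ requires --- by Definition \ref{d2} and Proposition \ref{prop4} you only need $\int 1_{H^{c}\cap\{X_{s}\neq0\}}\,dV_{s}=0$, i.e.\ $dV$ carried by $H\cup\{t:X_{t}=0\}$ --- and it is more than hypothesis (2) delivers, since the whole point of $\Sigma^{r}(H)$ is to tolerate a finite-variation charge sitting on $H$ away from the zeros of $X$. The ``$H$-supported ambiguity'' you mention at the end is therefore not something to be argued away; it is exactly the allowed slack, and trying to eliminate it would fail. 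Second, your proposed mechanism does not reach even the correct target: the test function $f$ is composed with $V$, not with $X$, so taking $f_{n}\uparrow 1_{(0,\infty)}$ yields information about $1_{\{V_{t}>0\}}X_{t}$, not about $\int 1_{\{X_{s}\neq0\}}\,dV_{s}$, and the ``excursion interval'' alternative rests on the same overstrong claim. The paper's device is to use $F(x)=x^{2}$ alongside $F(x)=x$: with $W=V-X\in\mathcal{M}(H)$ and $V^{2}-2VX\in\mathcal{M}(H)$, integration by parts gives $V_{t}^{2}-2V_{t}X_{t}=2\int_{0}^{t}V_{s}\,dW_{s}-2\int_{0}^{t}X_{s}\,dV_{s}$, hence $\int_{0}^{\cdot}X_{s}\,dV_{s}\in\mathcal{M}(H)$; being continuous, of finite variation and null at $0$, its local-martingale part vanishes, so $1_{\{D_{s}\neq0\}}X_{s}\,dV_{s}=0$, and the non-negativity of $X$ (which is where that standing hypothesis is actually used) upgrades this to $\int 1_{\{D_{s}X_{s}\neq0\}}\,dV_{s}=0$. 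That is precisely the support condition needed to invoke Proposition \ref{prop4} and conclude $X\in\Sigma^{r}(H)$. You should replace your second step by this argument.
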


\begin{proof}

$(1)\Rightarrow (2)$ Let $X$ be a process of the class $\Sigma^{r}(H)$. That is, it decomposes as $X=M+A$, where $M\in\mathcal{M}(H)$ and $A$ is a continuous finite variations process such that $dA$ is carried  by $\{t\geq0:X_{t}=0\}$. In fact, we have from Remark \ref{r2.1} that $A$ is non-decreasing. Thus, we put $V=A$. We obtain from balayage's formula in the predictable case that for any locally bounded Borel function $f:\R_{+}\rightarrow\R_{+}$, $\forall t\geq0$,
$$f(A_{t})X_{t}=\int_{0}^{t}{f(A_{s})dX_{s}}=\int_{0}^{t}{f(A_{s})dM_{s}}+F(A_{t}).$$
That is,
$$f(A_{t})X_{t}-F(A_{t})=\int_{0}^{t}{f(A_{s})dM_{s}}.$$
Hence, this entails from Remark \ref{R2.1} that $\left(f(V_{t})X_{t}-F(V_{t})\right)_{t\geq 0}$ is an element of the class $\mathcal{M}(H)$.
\\
$(2)\Rightarrow (1)$ Let us first take $F(x)=x$. Thus, $W=V-X\in\mathcal{M}(H)$. Now, for $F(x)=x^{2}$, we obtain that $W^{'}=V^{2}-2VX\in\mathcal{M}(H)$. An application of integration by parts entails that $\forall t\geq0$,
$$W^{'}_{t}=2\int_{0}^{t}{V_{s}dW_{s}}-2\int_{0}^{t}{X_{s}dV_{s}}.$$
Hence, $\int_{0}^{\cdot}{X_{s}dV_{s}}\in\mathcal{M}(H)$. That is, 
$$\int{1_{\{D_{s}\neq0\}}X_{s}dV_{s}}=0.$$
This implies that
$$\int{1_{\{D_{s}X_{s}\neq0\}}dV_{s}}=0.$$
Consequently, $dV$ is carried by $H\cup\{t\geq0:X_{t}=0\}$. Hence, we obtain from Proposition \ref{prop4} that $X\in\Sigma^{r}(H)$. Which completes the demonstration.
\end{proof}

\begin{rem}
We have proved in the above proof that $\forall t\geq0$,
$$f(A_{t})X_{t}-F(A_{t})=\int_{0}^{t}{f(A_{s})dM_{s}}.$$
Hence, if $X\in(\Sigma)$. $f(A)X-F(A)$ is a local martingale. Which permits us to recover an interesting characterization result for processes of the class $(\Sigma)$ that Nikeghbali called characterization martingale (Theorem 2.1 of \cite{nik}). 
\end{rem}

Now, we shall extend the notion of class $\Sigma^{r}(H)$ to càdlàg special semi-martingales $X=m+v+A$ whose the finite variational part $A$ is càdlàg instead of continuous.
\begin{theorem}\label{th4}
let $X$ be a càdlàg semi-martingale. The following are equivalents: 
\begin{enumerate}
	\item $X\in \Sigma^{r}(H)$; 		
	\item There exists a càdlàg predictable process $V$ with finite variations such that for any function $f\in\mathcal{C}^1(\mathbb{R})$ and defining $F(x)=\int_{0}^{x}{f(y)dy}$, the process 
$$\left(f(V^{c}_{t})X_{t}-F(V^{c}_{t})-\sum_{0<s\leq t}{[f(V^{c}_{s})-f^{'}(V^{c}_{s})X_{s}]\Delta V_{s}}\right)_{t\geq 0}$$
 is an element of the class $\mathcal{M}(H)$. 
\end{enumerate}
\end{theorem}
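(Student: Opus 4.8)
The plan is to mimic the structure of the proof of Theorem \ref{th3}, handling the jumps of $V$ via a discrete correction term, and then running the same integration-by-parts argument with $f(x)=x$ and $f(x)=x^2$ to recover the support condition on $dV$.

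For the implication $(1)\Rightarrow(2)$: write $X=M+A$ with $M\in\mathcal{M}(H)$ and $A$ c\`adl\`ag with finite variations, $dA$ carried by $\{X=0\}$; set $V=A$, and decompose $V=V^c+\sum_{0<s\leq\cdot}\Delta V_s$ into its continuous and purely discontinuous finite-variation parts. Applying the It\^o--Tanaka / change-of-variables formula to $f(V^c_t)$ (which is a continuous finite-variation process, so only the first-order term survives) and then to the product $f(V^c_t)X_t$ via integration by parts, one gets
$$f(V^c_t)X_t = \int_0^t f(V^c_s)\,dX_s + \int_0^t X_{s-}f'(V^c_s)\,dV^c_s + \sum_{0<s\leq t} f(V^c_s)\Delta X_s.$$
Here the balayage-type identity $\int_0^t f(V^c_s)\,dA^c_s = F(V^c_t)$-type bookkeeping is replaced by a direct computation: since $V=A$, $dX_s = dM_s + dA^c_s + \Delta A_s$, and $\int_0^t f(V^c_s)\,dA^c_s = \int_0^t f(V^c_s)\,dV^c_s$, which combined with $\int_0^t X_{s-}f'(V^c_s)\,dV^c_s$ and the fact that $dA^c$ is carried by $\{X=0\}$ (so $X_{s-}=X_s=0$ there and the $f'$-term against $dV^c$ vanishes) yields $\int_0^t f(V^c_s)\,dV^c_s = F(V^c_t)$. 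The jump term $\sum f(V^c_s)\Delta X_s = \sum f(V^c_s)\Delta M_s + \sum f(V^c_s)\Delta A_s$; the martingale jumps get absorbed into the $\mathcal{M}(H)$-part via Remark \ref{R2.1}, while $\sum f(V^c_s)\Delta A_s$ should match, up to the stated correction, the quantity $\sum_{0<s\leq t}[f(V^c_s)-f'(V^c_s)X_s]\Delta V_s$ once one notes $\Delta V_s = \Delta A_s$ and handles the $f'(V^c_s)X_s\Delta V_s$ term (which is where the care is needed, since $X_s$ on the jump set of $A$ need not vanish). After moving all these explicit finite-variation pieces to the left, what remains is $\int_0^t f(V^c_s)\,dM_s$, which lies in $\mathcal{M}(H)$ by Remark \ref{R2.1}.

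For $(2)\Rightarrow(1)$: take $f\equiv 1$, i.e. $F(x)=x$, so that $W:=V^c-X+(\text{discrete correction})\in\mathcal{M}(H)$; since the correction term with $f\equiv1,f'\equiv0$ is $\sum_{0<s\leq t}\Delta V_s = V_t - V^c_t$, this collapses to $V-X\in\mathcal{M}(H)$ after absorbing $\sum\Delta V_s$, so $X = V - W'$ for some $W'\in\mathcal{M}(H)$, exhibiting the desired decomposition $X=m+v+A$ with $V$ playing the role of $v+A$. Then take $f(x)=x$, i.e. $F(x)=x^2$, to get $V^{c2} - 2V^cX - (\text{correction}) \in \mathcal{M}(H)$, apply integration by parts exactly as in Theorem \ref{th3}, and deduce $\int_0^\cdot X_s\,dV_s\in\mathcal{M}(H)$, hence $\int 1_{\{D_sX_s\neq0\}}\,dV_s = 0$, so $dV$ is carried by $H\cup\{X=0\}$; Proposition \ref{prop4} then gives $X\in\Sigma^r(H)$.

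I expect the main obstacle to be the bookkeeping of the jump terms: precisely matching $\sum f(V^c_s)\Delta X_s$ (coming from integration by parts for c\`adl\`ag processes) against the stated correction $\sum[f(V^c_s)-f'(V^c_s)X_s]\Delta V_s$, and making sure the $f'(V^c_s)X_s\Delta V_s$ contributions cancel correctly. One must be careful about whether the change-of-variables formula for $f(V^c_t)$ is applied with $V^c$ alone (continuous, finite variation — so genuinely first order) versus $f(V_t)$ with the full jumping $V$, and to track which jumps feed into the $\mathcal{M}(H)$-component versus the correction sum. Once the jump algebra is settled, both directions are formally parallel to the continuous case treated in Theorem \ref{th3}.
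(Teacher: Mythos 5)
Your overall strategy coincides with the paper's in both directions, and your treatment of $(2)\Rightarrow(1)$ (take $F(x)=x$ to see that $V-X\in\mathcal{M}(H)$, then $F(x)=x^{2}$ plus integration by parts to get $\int_{0}^{\cdot}{X_{s}dV_{s}}\in\mathcal{M}(H)$, hence $dV$ carried by $\{t\geq0:D_{t}X_{t}=0\}$, and conclude via Proposition \ref{prop4}) is exactly what the paper does. But the forward direction as you wrote it contains a real error and stops short of the one computation that makes the theorem work. Your integration-by-parts formula
$$f(V^{c}_{t})X_{t}=\int_{0}^{t}{f(V^{c}_{s})dX_{s}}+\int_{0}^{t}{X_{s-}f^{'}(V^{c}_{s})dV^{c}_{s}}+\sum_{0<s\leq t}{f(V^{c}_{s})\Delta X_{s}}$$
double-counts the jumps of $X$: since $f(V^{c})$ is continuous and of finite variation, $[f(V^{c}),X]=0$ and the correct identity is simply $f(V^{c}_{t})X_{t}=\int_{0}^{t}{f(V^{c}_{s})dX_{s}}+\int_{0}^{t}{X_{s}f^{'}(V^{c}_{s})dV^{c}_{s}}$, the first integral already containing the jumps of $X$. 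Worse, the ensuing step in which ``the martingale jumps $\sum{f(V^{c}_{s})\Delta M_{s}}$ get absorbed into the $\mathcal{M}(H)$-part via Remark \ref{R2.1}'' is not legitimate: that sum need not converge for a general c\`adl\`ag local martingale, and even when it does it is a finite-variation process with no reason to lie in $\mathcal{M}(H)$. The martingale contribution must be kept as the single stochastic integral $\int_{0}^{\cdot}{f(V^{c}_{s})dM_{s}}$, which is the object Remark \ref{R2.1} actually applies to.

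The jump bookkeeping you flag as ``where the care is needed'' is resolved in the paper by splitting only the finite-variation part: with $V=A$ and $A=A^{c}+\sum_{s\leq\cdot}{\Delta A_{s}}$, one has $\int_{0}^{t}{f(A^{c}_{s})dA_{s}}=F(A^{c}_{t})+\sum_{s\leq t}{f(A^{c}_{s})\Delta A_{s}}$ and $\int_{0}^{t}{f^{'}(A^{c}_{s})X_{s}dA^{c}_{s}}=\int_{0}^{t}{f^{'}(A^{c}_{s})X_{s}dA_{s}}-\sum_{s\leq t}{f^{'}(A^{c}_{s})X_{s}\Delta A_{s}}=-\sum_{s\leq t}{f^{'}(A^{c}_{s})X_{s}\Delta A_{s}}$, the integral against $dA$ vanishing because $dA$ is carried by $\{t\geq0:X_{t}=0\}$. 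Adding these to $\int_{0}^{t}{f(A^{c}_{s})dM_{s}}$ produces exactly $F(A^{c}_{t})+\sum_{s\leq t}{[f(A^{c}_{s})-f^{'}(A^{c}_{s})X_{s}]\Delta A_{s}}$, i.e.\ the stated correction term; there is no residual cancellation to hope for. Two smaller points: with $f(x)=x$ one gets $F(x)=x^{2}/2$ rather than $x^{2}$ (harmless, but fix the constants), and the theorem implicitly uses the extension of $\Sigma^{r}(H)$ in which the part $A$ is allowed to be c\`adl\`ag rather than continuous --- otherwise $\Delta V\equiv0$ in the forward direction and the correction sum is vacuous.
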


\begin{proof}
$(1)\Rightarrow(2)$ Let us consider $V=A$. Hence, through integration by parts, we get
$$f(A^{c}_{t})X_{t}=\int_{0}^{t}{f(A^{c}_{s})dX_{s}}+\int_{0}^{t}{f^{'}(A^{c}_{s})X_{s}dA^{c}_{s}}.$$
Hence, we have  
$$f(A^{c}_{t})X_{t}=\int_{0}^{t}{f(A^{c}_{s})dX_{s}}+\int_{0}^{t}{f^{'}(A^{c}_{s})X_{s}dA_{s}}-\sum_{s\leq t}{f^{'}(A^{c}_{t})X_{s}\Delta A_{s}}$$
because $A=A^{c}+\sum_{s\leq t}{\Delta A_{s}}$. Furthermore, we have $\int_{0}^{t}{f^{'}(A^{c}_{s})X_{s}dA_{s}}=0$ since $dA$ is carried by $\{t\geq0:X_{t}=0\}$. Therefore, it follows that
$$f(A^{c}_{t})X_{t}=\int_{0}^{t}{f(A^{c}_{s})dX_{s}}-\sum_{s\leq t}{f^{'}(A^{c}_{t})X_{s}\Delta A_{s}}$$
$$\hspace{2cm}=\int_{0}^{t}{f(A^{c}_{s})dM_{s}}+\int_{0}^{t}{f(A^{c}_{s})dA^{c}_{s}}+\sum_{s\leq t}{[f(A^{c}_{s})-f^{'}(A^{c}_{s})X_{s}]\Delta A_{s}}.$$
Consequently,
$$f(A^{c}_{t})X_{t}=\int_{0}^{t}{f(A^{c}_{s})dM_{s}}+F(A^{c}_{t})+\sum_{s\leq t}{[f(A^{c}_{s})-f^{'}(A^{c}_{s})X_{s}]\Delta A_{s}}.$$
This implies that
$$F(A^{c}_{t})+\sum_{s\leq t}{[f(A^{c}_{s})-f^{'}(A^{c}_{s})X_{s}]\Delta A_{s}}-f(A^{c}_{t})X_{t}=-\int_{0}^{t}{f(A^{c}_{s})dM_{s}}.$$
Hence, we deduce from Remark \ref{R2.1} that 
$$\left(f(V^{c}_{t})X_{t}-F(V^{c}_{t})-\sum_{0<s\leq t}{[f(V^{c}_{s})-f^{'}(V^{c}_{s})X_{s}]\Delta V_{s}}\right)_{t\geq 0}$$
 is an element of the class $\mathcal{M}(H)$.\\
$(2)\Rightarrow(1)$ First, let $F(x)=x$. Then, the process $W$ defined by 
$$W_{t}=V^{c}_{t}+\sum_{s\leq t}{\Delta V_{s}}-X_{t}=V_{t}-X_{t}$$
is an element of the class $\mathcal{M}(H)$. Next, we take $F(x)=x^{2}$. Thus, process $B$ defined by 
$$B_{t}=(V_{t}^{c})^{2}-2V_{t}^{c}X_{t}+2\sum_{s\leq t}{V_{s}^{c}\Delta V_{s}}-2\sum_{s\leq t}{X_{s}\Delta V_{s}}$$
is a process of the class $\mathcal{M}(H)$. However, through integration by parts, it follows that
$$B_{t}=2\int_{0}^{t}{V^{c}_{s}dV^{c}_{s}}-2\int_{0}^{t}{V^{c}_{s}dX_{s}}-2\int_{0}^{t}{X_{s}dV^{c}_{s}}+2\sum_{s\leq t}{V_{s}^{c}\Delta V_{s}}-2\sum_{s\leq t}{X_{s}\Delta V_{s}}$$
$$\hspace{-0.75cm}=2\int_{0}^{t}{V^{c}_{s}d\left(V^{c}_{s}+\sum_{u\leq s}{\Delta V_{u}}-X_{s}\right)}-2\int_{0}^{t}{X_{s}d\left(V^{c}_{s}+\sum_{u\leq s}{\Delta V_{u}}\right)}$$
$$\hspace{-6cm}=2\int_{0}^{t}{V^{c}_{s}dW_{s}}-2\int_{0}^{t}{X_{s}dV_{s}}.$$
Consequently, we must have that $\int_{0}^{\cdot}{X_{s}dV_{s}}\in\mathcal{M}(H)$. That is,
$$\int_{0}^{t}{1_{\{D_{s}\neq0\}}X_{s}dV_{s}}=0.$$
Which also means that
$$\int_{0}^{t}{1_{\{D_{s}X_{s}\neq0\}}dV_{s}}=0.$$
In other words, $dV$ is carried  by the set $\{t\geq0:D_{s}X_{t}=0\}$. Consequently, $X\in\Sigma^{r}(H)$.
\end{proof}

%%%%%%%%%%%%%%%%%%%%%%%%%%%%%%%%%%%%%%%%%%%%%%%%%%%%%%%%%%%%%%%%%%%%%%%
\section{Representation results with respect to the honest times \texorpdfstring{$\gamma$}{gama} and \texorpdfstring{$g$}{g}}\label{sec:4}

In this section, we provide some formulas permitting to represent some processes of the class $\Sigma^{r}(H)$ using honest times  $\gamma$ and $g$. These formulas are inspired by a representation result given for relative martingales in Proposition 2.2 of \cite{1} and of an another representation formula given in Theorem 3.1 of \cite{pat} for processes of the class $(\Sigma)$. More precisely, for all stopping time $T<\infty$, any process $X$ of the class $\mathcal{R}(H)$ takes the form: $X_{T}=\E[X_{\infty}1_{\{\gamma<T\}}|\mathcal{F}_{T}]$. Whereas under some assumptions, a process $X$ of class $(\Sigma)$ is written as follows:  $X_{T}=\E[X_{\infty}1_{\{g<T\}}|\mathcal{F}_{T}]$. 

Throughout this section, we consider $\gamma$ and $g$ such that  $\P(\gamma<\infty)=1$ and $\P(g<\infty)=1$.

\begin{prop}\label{p2}
Let $X=m+v+A$ be a process of the class $\Sigma^{r}(H)$ such that $m$ is a true martingale and $\lim_{t\to+\infty}{X_{t}}$ exists. Let $\lim_{t\to+\infty}{X_{t}}=X_{\infty}$, $g=\sup\{t\geq0:X_{t}=0\}$ and $d_{t}=\inf\{s>t\geq0:X_{s}=0\}$. Hence, for all stopping time $T<\infty$, we have:
\begin{equation}
	X_{T}=\E\left[X_{\infty}1_{\{g<T\}}|\mathcal{F}_{T}\right]+\E\left[(v_{T}-v_{d_{T}})1_{\{\gamma>T\}}|\mathcal{F}_{T}\right].
\end{equation}
\end{prop}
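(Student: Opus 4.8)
The plan is to mimic the argument Cheridito--Platen use for the class $(\Sigma)$ (Theorem 3.1 of \cite{pat}), but to keep track of the extra finite-variation term $v$ coming from the $\mathcal{M}(H)$-part. The starting point is the decomposition $X=m+v+A$ and the observation that, since $m$ is a true martingale and $X_{\infty}$ exists, $v+A=X-m$ also converges; write $v_{\infty}+A_{\infty}$ for its limit. For a fixed finite stopping time $T$ one wants to express $X_T=\E[X_\infty\mid\mathcal{F}_T]-\E[(v_\infty-v_T)\mid\mathcal{F}_T]-\E[(A_\infty-A_T)\mid\mathcal{F}_T]$ and then show that the two correction terms can be rewritten, using the support properties of $dA$ and $dv$, in terms of the honest times $g$ and $\gamma$.

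First I would handle the $A$-term. Because $dA$ is carried by $\{X=0\}$, on the event $\{g<T\}$ we have $A_\infty-A_T=0$ (no zero of $X$ occurs after $T$), so $\E[(A_\infty-A_T)\mid\mathcal{F}_T]=\E[(A_\infty-A_T)1_{\{g\ge T\}}\mid\mathcal{F}_T]$; more importantly, $X_\infty 1_{\{g\ge T\}}$ is the relevant piece. The key algebraic identity I would aim for is $\E[X_\infty 1_{\{g\ge T\}}\mid\mathcal{F}_T]=\E[(A_\infty-A_{d_T})\mid\mathcal{F}_T]+\E[X_\infty 1_{\{\gamma\ge T,\,g<T\}}\cdots]$, but it is cleaner to split $X_\infty=X_\infty 1_{\{g<T\}}+X_\infty 1_{\{g\ge T\}}$ at the outset. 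On $\{g<T\}$, $T$ lies in an excursion interval of $X$ away from $0$, and on the complementary event the contribution is absorbed. The precise bookkeeping: starting from $X_T = \E[X_\infty\mid\mathcal F_T] - \E[v_\infty - v_T\mid\mathcal F_T] - \E[A_\infty - A_T\mid\mathcal F_T]$, use that $A_\infty-A_T = (A_\infty - A_{d_T}) + (A_{d_T}-A_T)$ and that $A_{d_T}-A_T$ vanishes since no zero of $X$ lies in $(T,d_T)$, while $A_\infty-A_{d_T}$ combines with $\E[X_\infty 1_{\{g\ge T\}}\mid\mathcal F_T]$ to telescope — this is exactly the computation behind $X_T=\E[X_\infty 1_{\{g<T\}}\mid\mathcal F_T]$ in the pure class $(\Sigma)$ case, applied to $X-v$ which is a process of $(\Sigma)$ after the deterministic shift argument. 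Then the leftover term is $-\E[v_\infty-v_T\mid\mathcal F_T]$, and here I would use that $dv$ is carried by $H$: write $v_\infty-v_T=(v_\infty-v_{d_T})+(v_{d_T}-v_T)$ and note $v_{d_T}-v_T$ need not vanish, so instead split according to whether $\gamma>T$ or not, getting $\E[(v_T-v_{d_T})1_{\{\gamma>T\}}\mid\mathcal F_T]$ after combining with the part of $X_\infty$ supported on $\{\gamma\ge T\}$.

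The cleanest route, and the one I would actually write, is: set $Y:=X-v$, observe $Y\in(\Sigma)$ (its martingale part is $m$, its finite-variation part is $A$ with $dA$ carried by $\{Y=0\}=\{X=0\}$ on the relevant set, modulo the care needed where $v\ne 0$ — this is the delicate point), apply Theorem 3.1 of \cite{pat} to $Y$ to get $Y_T=\E[Y_\infty 1_{\{g<T\}}\mid\mathcal F_T]$, then add back $v_T$ and rewrite $v_T=\E[v_\infty\mid\mathcal F_T]-\E[v_\infty-v_T\mid\mathcal F_T]$, folding $\E[v_\infty\mid\mathcal F_T]$ together with $\E[Y_\infty 1_{\{g<T\}}\mid\mathcal F_T]$ to produce $\E[X_\infty 1_{\{g<T\}}\mid\mathcal F_T]$ plus an error term supported on $\{g\ge T\}$ that one shows equals $\E[(v_{d_T}-v_T)1_{\{\gamma>T\}}\mid\mathcal F_T]$ by the balayage/support property of $dv$ on $H$ together with $d_T$ and the fact that $v$ is frozen after $\gamma$.

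\textbf{Main obstacle.} The genuinely tricky step is showing $Y=X-v$ really is a class $(\Sigma)$ process in the sense required by Theorem 3.1 of \cite{pat} — i.e.\ that $dA$ is carried by $\{Y=0\}$ rather than just $\{X=0\}$ — and, relatedly, pinning down exactly why the residual term collapses to $(v_T-v_{d_T})1_{\{\gamma>T\}}$ and not some more complicated expression involving both $g$ and $\gamma$. This requires a careful case analysis of the position of $T$ relative to the last zero of $D$ (the time $\gamma$) and the last zero of $X$ (the time $g$), using that after $\gamma$ the support of $dv$ is exhausted so $v$ is constant, hence $v_{d_T}-v_T=0$ on $\{\gamma\le T\}$, which is precisely what lets the indicator $1_{\{\gamma>T\}}$ appear. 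I would expect the support arguments for $dv$ on $H$ and for $dA$ on $\{X=0\}$, combined with optional-stopping on the true martingale $m$, to close the gap, but the interplay of the two honest times is where the care is needed.
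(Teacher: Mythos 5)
Your proposal correctly identifies the two support facts that drive the result (no zero of $X$ in $(T,d_T)$ forces $A_{d_T}=A_T$; $v$ frozen after $\gamma$ forces $v_{d_T}=v_T$ on $\{\gamma<T\}$, which is what produces the indicator $1_{\{\gamma>T\}}$), but neither of the two routes you sketch is actually closed, and the one you say you would write is the one that fails. The reduction $Y:=X-v\in(\Sigma)$ is false in general: $Y=m+A$ with $dA$ carried by $\{X=0\}$, which is not $\{Y=0\}$, so Theorem 3.1 of \cite{pat} does not apply to $Y$; and even if it did, the honest time it would produce is the last zero of $Y$, not $g=\sup\{t:X_t=0\}$. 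You flag this as ``the delicate point'' but offer no way around it --- and there is none, because the obstruction is real. Your first route has a different gap: the starting identity $X_T=\E[X_\infty\mid\mathcal F_T]-\E[v_\infty-v_T\mid\mathcal F_T]-\E[A_\infty-A_T\mid\mathcal F_T]$ presupposes that $m$ is closable and that $v_\infty$ and $A_\infty$ exist and are integrable, none of which is among the hypotheses, and the subsequent ``telescoping'' of $A_\infty-A_{d_T}$ against $\E[X_\infty 1_{\{g\ge T\}}\mid\mathcal F_T]$ is asserted rather than carried out.

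The idea you are missing, which makes the paper's proof a two-line computation needing neither $X_\infty$ as a conditional expectation nor any appeal to the class-$(\Sigma)$ representation theorem, is the pointwise identity $X_\infty 1_{\{g<T\}}=X_{d_T}$ (on $\{g<T\}$ one has $d_T=\infty$ and $X_{d_T}=X_\infty$; on $\{g\ge T\}$ one has $X_{d_T}=0$). Writing $X_{d_T}=m_{d_T}+v_{d_T}+A_{d_T}$, using $A_{d_T}=A_T$ and optional stopping of the true martingale $m$ between $T$ and $d_T$ gives directly
\[
\E\left[X_\infty 1_{\{g<T\}}\mid\mathcal F_T\right]=X_T+\E\left[v_{d_T}-v_T\mid\mathcal F_T\right],
\]
after which the split of the last term over $\{\gamma<T\}$ and $\{\gamma>T\}$ --- the one ingredient you do have --- finishes the proof. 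As written, your argument does not establish the proposition.
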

\begin{proof}
Firstly, remark that we have: $X_{\infty}1_{\{g<T\}}=X_{d_{T}}=m_{d_{T}}+v_{d_{T}}+A_{d_{T}}$. Since $dA$ is carried by $\{t\geq0:X_{t}=0\}$ and $g<T$, $A_{T}=A_{d_{T}}$. Thus, we obtain:
$$\E\left[X_{\infty}1_{\{g<T\}}|\mathcal{F}_{T}\right]=\E\left[m_{d_{T}}|\mathcal{F}_{T}\right]+\E\left[v_{d_{T}}|\mathcal{F}_{T}\right]+A_{T}.$$
Hence, it follows that
$$\E\left[X_{\infty}1_{\{g<T\}}|\mathcal{F}_{T}\right]=m_{T}+\E\left[v_{d_{T}}|\mathcal{F}_{T}\right]+A_{T}$$
because $m$ is a true martingale and $T$ and $d_{T}$ are stopping time such that $T<d_{T}$. Therefore, this entails that
$$\E\left[X_{\infty}1_{\{g<T\}}|\mathcal{F}_{T}\right]=X_{T}+\E\left[(v_{d_{T}}-v_{T}  )|\mathcal{F}_{T}\right].$$
On another hand, we have:
$$\E\left[(v_{d_{T}}-v_{T}  )|\mathcal{F}_{T}\right]=\E\left[(v_{d_{T}}-v_{T}  )1_{\{\gamma<T\}}|\mathcal{F}_{T}\right]+\E\left[(v_{d_{T}}-v_{T}  )1_{\{\gamma>T\}}|\mathcal{F}_{T}\right].$$
However, 
$$\E\left[(v_{d_{T}}-v_{T}  )1_{\{\gamma<T\}}|\mathcal{F}_{T}\right]=0$$
because, $v$ is constant after $\gamma$. Consequently,we obtain:
$$X_{T}=\E\left[X_{\infty}1_{\{g<T\}}|\mathcal{F}_{T}\right]+\E\left[(v_{T}-v_{d_{T}})1_{\{\gamma>T\}}|\mathcal{F}_{T}\right].$$
This completes the proof.
\end{proof}

In next corollary, we obtain the representation formula given in Theorem 3.1 of \cite{pat} for processes of the class $(\Sigma)$.
\begin{coro}
If in addition of assumptions of Proposition \ref{p2}, $X\in(\Sigma)$. Hence, for all stopping time $T<\infty$ we have:
\begin{equation}
	X_{T}=\E\left[X_{\infty}1_{\{g<T\}}|\mathcal{F}_{T}\right].
\end{equation}
\end{coro}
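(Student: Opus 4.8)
The plan is to specialize Proposition \ref{p2} to the case $X\in(\Sigma)$ and observe that the second conditional expectation term vanishes. The key point is that when $X$ belongs to the class $(\Sigma)$, its decomposition $X=m+v+A$ from Definition \ref{d1} has $v\equiv0$: indeed, a process of the class $(\Sigma)$ is by definition of the form $X=m+A$ with $m$ a local martingale and $A$ a continuous finite variation process whose support is contained in $\{t\geq0:X_t=0\}$, so comparing with the $\Sigma^{r}(H)$ decomposition and using (when needed) a uniqueness consideration, one sees that the finite variation part on $H$ is identically zero.

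First I would invoke Proposition \ref{p2} directly, since all its hypotheses ($m$ a true martingale, $\lim_{t\to+\infty}X_t=X_\infty$ exists, $\P(g<\infty)=1$, $\P(\gamma<\infty)=1$) are carried over by assumption. This gives
$$X_{T}=\E\left[X_{\infty}1_{\{g<T\}}|\mathcal{F}_{T}\right]+\E\left[(v_{T}-v_{d_{T}})1_{\{\gamma>T\}}|\mathcal{F}_{T}\right]$$
for every stopping time $T<\infty$. Then I would note that since $X\in(\Sigma)$ we have $v\equiv0$, hence $v_{T}-v_{d_{T}}=0$ almost surely, so the second term is zero and the claimed identity $X_{T}=\E[X_{\infty}1_{\{g<T\}}|\mathcal{F}_{T}]$ follows immediately.

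The only subtle point — and the one I would be most careful about — is justifying rigorously that the process $v$ in the $\Sigma^{r}(H)$-decomposition of an element of $(\Sigma)$ is zero. If one simply takes the canonical decomposition $X=m+A$ coming from membership in $(\Sigma)$ and reads it as $X=m+0+A$, this is transparent; the decomposition as an element of $\Sigma^{r}(H)$ is then this one, and $v=0$. If instead one worries about non-uniqueness of the $\Sigma^{r}(H)$-decomposition, one can restrict attention to the case at hand: here $X$ is nonnegative (it equals $\E[X_\infty 1_{\{g<T\}}\mid\mathcal F_T]$ in the conclusion, and the classical class $(\Sigma)$ theory has $X\geq 0$), so the uniqueness result stated just before Section \ref{sec:3} applies and forces $v=0$. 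Either way the argument is short, and there is no real obstacle beyond this bookkeeping.
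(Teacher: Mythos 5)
Your proof is correct and reaches the conclusion by essentially the same specialization of Proposition \ref{p2}, but the way you kill the second term differs slightly from the paper's, and one of your two justifications is weaker than you suggest. The paper does not claim $v\equiv0$: it keeps the given decomposition $X=m+v+A$ and observes that, because $X\in(\Sigma)$, the whole finite variation part $v+A$ is carried by $\{t\geq0:X_{t}=0\}$, hence so is $dv$; since $X\neq0$ on $(T,d_{T})$ by definition of $d_{T}$, this gives $v_{d_{T}}=v_{T}$ and the second conditional expectation vanishes for whatever $v$ one started with. Your first route --- read $X=m+0+A$ as the $\Sigma^{r}(H)$-decomposition and apply Proposition \ref{p2} to it --- is also valid, because the conclusion does not mention $v$ and the local martingale part (hence the true-martingale hypothesis) is unchanged when you switch decompositions: $v+A$ and the $(\Sigma)$-part are both continuous adapted finite variation processes null at $0$, so $m$ is the same in both. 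Your fallback via the uniqueness corollary is the weak point: membership in $(\Sigma)$ does not force $X\geq0$ (any local martingale is in $(\Sigma)$ with $A=0$), and the $\Sigma^{r}(H)$-decomposition need not be unique when the finite variation measures can charge $H\cap\{t\geq0:X_{t}=0\}$, so asserting that $v\equiv0$ in \emph{every} decomposition is not justified. Fortunately you do not need it; either your first route or the paper's observation that $dv$ is carried by $\{t\geq0:X_{t}=0\}$ closes the argument.
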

\begin{proof}
Remark that $dv$ is carried by $\{t\geq0:X_{t}=0\}$ since $X\in(\Sigma)$ and  $dA$ is already carried by $\{t\geq0:X_{t}=0\}$. Hence, for $d_{T}=\inf\{s>T:X_{s}=0\}$, we obtain that $v_{d_{T}}=v_{T}$. Which implies the result.
\end{proof}

In the next proposition, we give the representation formula which generalizes Proposition 2.2 of \cite{1}.
\begin{prop}\label{p3}
Let $X=m+v+A$ be a process of the class $\Sigma^{r}(H)$ such that $m$ is a true martingale and $\lim_{t\to+\infty}{X_{t}}$ exists. Let $\lim_{t\to+\infty}{X_{t}}=X_{\infty}$, $\gamma=\sup\{t\geq0:t\in H\}$ and $d^{'}_{t}=\inf\{s>t\geq0:s\in H=0\}$. Hence, for all stopping time $T<\infty$, we have:
\begin{equation}
	X_{T}=\E\left[X_{\infty}1_{\{\gamma<T\}}|\mathcal{F}_{T}\right]+\E\left[(A_{T}-A_{d^{'}_{T}})1_{\{g>T\}}|\mathcal{F}_{T}\right].
\end{equation}
\end{prop}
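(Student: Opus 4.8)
The plan is to mirror the proof of Proposition \ref{p2}, interchanging the roles of $v$ and $A$ and of the honest times $g$ and $\gamma$, and replacing $d_{t}$ by $d^{'}_{t}=\inf\{s>t:s\in H\}$. I will assume throughout — as surely intended here, and as holds for relative martingales of $\mathcal{R}(H)$ — that $X$ vanishes on $H$; see the last paragraph. Note first that $d^{'}_{T}$ is an $(\mathcal{F}_{t})$-stopping time with $d^{'}_{T}\geq T$. The crucial observation is the identity
$$X_{\infty}1_{\{\gamma<T\}}=X_{d^{'}_{T}}\qquad\text{a.s.}$$
Indeed, on $\{\gamma<T\}$ the set $H$ has no point strictly after $T$, so $d^{'}_{T}=\infty$ and $X_{d^{'}_{T}}=X_{\infty}$; on $\{\gamma>T\}$ one has $d^{'}_{T}\leq\gamma<\infty$ and $d^{'}_{T}\in H$, hence $X_{d^{'}_{T}}=0$ since $X$ vanishes on $H$ (the event $\{\gamma=T\}$ is negligible).

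Then I would decompose $X_{d^{'}_{T}}=m_{d^{'}_{T}}+v_{d^{'}_{T}}+A_{d^{'}_{T}}$ and condition on $\mathcal{F}_{T}$. By optional sampling (using that $m$ is a uniformly integrable true martingale and $T\leq d^{'}_{T}$), $\E[m_{d^{'}_{T}}|\mathcal{F}_{T}]=m_{T}$. Since $dv$ is carried by $H$ while, by definition of $d^{'}_{T}$, the interval $(T,d^{'}_{T})$ meets no point of $H$, the continuity of $v$ gives $v_{d^{'}_{T}}=v_{T}$ (when $d^{'}_{T}=\infty$ this persists, $v$ being constant on $[\gamma,\infty)\supseteq[T,\infty)$ on $\{\gamma\leq T\}$); and $A_{d^{'}_{T}}$ stays inside the conditional expectation since $\mathcal{F}_{T}\subseteq\mathcal{F}_{d^{'}_{T}}$. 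Using $m_{T}+v_{T}=X_{T}-A_{T}$ this yields
$$\E[X_{\infty}1_{\{\gamma<T\}}|\mathcal{F}_{T}]=X_{T}-\E[A_{T}-A_{d^{'}_{T}}|\mathcal{F}_{T}],$$
i.e. $X_{T}=\E[X_{\infty}1_{\{\gamma<T\}}|\mathcal{F}_{T}]+\E[A_{T}-A_{d^{'}_{T}}|\mathcal{F}_{T}]$.

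It then remains to replace $\E[A_{T}-A_{d^{'}_{T}}|\mathcal{F}_{T}]$ by $\E[(A_{T}-A_{d^{'}_{T}})1_{\{g>T\}}|\mathcal{F}_{T}]$. Splitting over $\{g<T\}$ and $\{g>T\}$: on $\{g<T\}$, $X$ has no zero on $[T,\infty)$, so $dA$ (carried by $\{X=0\}$) vanishes there, whence $A_{d^{'}_{T}}=A_{T}$ and the $\{g<T\}$-piece is zero. This gives the asserted formula.

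The step I expect to be the real point is the identity $X_{d^{'}_{T}}=X_{\infty}1_{\{\gamma<T\}}$: on $\{\gamma\geq T\}$ it rests on $X_{d^{'}_{T}}=0$, i.e. on $X$ vanishing on $H$. For an arbitrary element of $\Sigma^{r}(H)$ this fails — for instance $X=m$, a uniformly integrable martingale whose zero set is independent of $H$, would give $\E[X_{\infty}1_{\{\gamma<T\}}|\mathcal{F}_{T}]=m_{T}\,\P(\gamma<T\mid\mathcal{F}_{T})\neq m_{T}$ in general — so I would state and use that hypothesis explicitly. Everything else is routine, the only care needed being the stopping-time/measurability issues when $d^{'}_{T}=\infty$ and the negligibility of the boundary events $\{\gamma=T\}$ and $\{g=T\}$.
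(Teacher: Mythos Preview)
Your argument is essentially the paper's own: start from $X_{\infty}1_{\{\gamma<T\}}=X_{d'_{T}}$, decompose, apply optional sampling to $m$, use $v_{d'_{T}}=v_{T}$ (since $(T,d'_{T})\cap H=\emptyset$ and $dv$ lives on $H$), and finally split the remaining $A$-term over $\{g<T\}\cup\{g>T\}$, the first piece vanishing because $A$ is constant after $g$. The paper's proof is line for line the same, obtained by swapping the roles of $(v,A)$ and $(\gamma,g)$ in the proof of Proposition~\ref{p2}.

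Where you go beyond the paper is in isolating the key identity $X_{\infty}1_{\{\gamma<T\}}=X_{d'_{T}}$ and noting that on $\{\gamma>T\}$ it amounts to $X_{d'_{T}}=0$, i.e.\ to $X$ vanishing on $H$. The paper simply states this identity as a ``remark'' with no hypothesis or justification. Your counterexample $X=m$ (a uniformly integrable martingale with $v=A=0$ and zeros unrelated to $H$) shows the proposition cannot hold in the generality asserted: one would get $\E[X_{\infty}1_{\{\gamma<T\}}\mid\mathcal{F}_{T}]=m_{T}\,\P(\gamma<T\mid\mathcal{F}_{T})$ rather than $m_{T}$. So the extra hypothesis you introduce is not caution but a genuine repair; the paper's proof tacitly uses it without stating it.
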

\begin{proof}
Firstly, remark that we have: $X_{\infty}1_{\{\gamma<T\}}=X_{d^{'}_{T}}=m_{d^{'}_{T}}+v_{d^{'}_{T}}+A_{d^{'}_{T}}$. Since $dv$ is carried by $H$ and $\gamma<T$, $v_{T}=v_{d^{'}_{T}}$. Thus, we obtain:
$$\E\left[X_{\infty}1_{\{\gamma<T\}}|\mathcal{F}_{T}\right]=\E\left[m_{d^{'}_{T}}|\mathcal{F}_{T}\right]+\E\left[A_{d^{'}_{T}}|\mathcal{F}_{T}\right]+v_{T}.$$
Hence, it follows that
$$\E\left[X_{\infty}1_{\{\gamma<T\}}|\mathcal{F}_{T}\right]=m_{T}+\E\left[A_{d^{'}_{T}}|\mathcal{F}_{T}\right]+v_{T}$$
because $m$ is a true martingale and $T$ and $d^{'}_{T}$ are stopping time such that $T<d^{'}_{T}$. Therefore, this entails that
$$\E\left[X_{\infty}1_{\{\gamma<T\}}|\mathcal{F}_{T}\right]=X_{T}+\E\left[(A_{d^{'}_{T}}-A_{T}  )|\mathcal{F}_{T}\right].$$
On another hand, we have:
$$\E\left[(A_{d^{'}_{T}}-A_{T}  )|\mathcal{F}_{T}\right]=\E\left[(A_{d^{'}_{T}}-A_{T}  )1_{\{g<T\}}|\mathcal{F}_{T}\right]+\E\left[(A_{d^{'}_{T}}-A_{T}  )1_{\{g>T\}}|\mathcal{F}_{T}\right].$$
However, 
$$\E\left[(A_{d^{'}_{T}}-A_{T}  )1_{\{g<T\}}|\mathcal{F}_{T}\right]=0$$
because, $A$ is constant after $g$. Consequently,we obtain:
$$X_{T}=\E\left[X_{\infty}1_{\{\gamma<T\}}|\mathcal{F}_{T}\right]+\E\left[(A_{T}-A_{d^{'}_{T}})1_{\{g>T\}}|\mathcal{F}_{T}\right].$$
This completes the proof.
\end{proof}

Now, we shall deduce the representation formula given by Azema and Yor in Proposition 2.2 of \cite{1} for relative martingales.
\begin{coro}
If in addition of assumptions of Proposition \ref{p3}, $X\in\mathcal{M}(H)$. Hence, for all stopping time $T<\infty$ we have:
\begin{equation}
	X_{T}=\E\left[X_{\infty}1_{\{\gamma<T\}}|\mathcal{F}_{T}\right].
\end{equation}
\end{coro}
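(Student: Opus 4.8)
The plan is to specialize Proposition \ref{p3} to the case $X\in\mathcal{M}(H)$. Recall that a process of the class $\mathcal{M}(H)$ decomposes as $X=m+v$, i.e., its finite variation part carried by $\{t\geq0:X_t=0\}$ is zero: $A\equiv0$. The assumptions of Proposition \ref{p3} are inherited, so the representation
$$X_{T}=\E\left[X_{\infty}1_{\{\gamma<T\}}|\mathcal{F}_{T}\right]+\E\left[(A_{T}-A_{d^{'}_{T}})1_{\{g>T\}}|\mathcal{F}_{T}\right]$$
holds for every stopping time $T<\infty$.

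First I would observe that since $X\in\mathcal{M}(H)$ we may take $A=0$ in the canonical decomposition $X=m+v+A$. Then $A_{T}-A_{d^{'}_{T}}=0$ identically, so the second conditional expectation in the displayed formula of Proposition \ref{p3} vanishes. This immediately yields
$$X_{T}=\E\left[X_{\infty}1_{\{\gamma<T\}}|\mathcal{F}_{T}\right],$$
which is the claim. One should briefly note that the decomposition with $A=0$ is legitimate: any $M\in\mathcal{M}(H)$ is of the form $m+v$ with $dv$ carried by $H$, and setting $A=0$ trivially satisfies $\int 1_{\{X_s\neq0\}}dA_s=0$, so $X$ is indeed of the class $\Sigma^{r}(H)$ with this decomposition, and Proposition \ref{p3} applies verbatim.

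I do not anticipate a genuine obstacle here: the corollary is a direct degeneration of the preceding proposition. The only point meriting a word of care is that, in principle, a process of $\mathcal{M}(H)$ could also admit a decomposition in $\Sigma^{r}(H)$ with a nonzero $A$ (for instance if part of the measure $dv$ happens to also be carried by $\{X=0\}$); but this causes no difficulty, because Proposition \ref{p3} is valid for \emph{any} admissible decomposition, and choosing the one with $A=0$ is enough to obtain the formula. Hence the proof is simply: invoke Proposition \ref{p3}, set $A=0$, and conclude.
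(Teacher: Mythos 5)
Your proof is correct and follows essentially the same route as the paper: both specialize Proposition \ref{p3} and show that the term $\E[(A_{T}-A_{d^{'}_{T}})1_{\{g>T\}}|\mathcal{F}_{T}]$ vanishes. The only cosmetic difference is that you choose the decomposition with $A\equiv0$ outright, while the paper keeps a general decomposition and argues that $dA$ is carried by $H$ so that $A_{d^{'}_{T}}=A_{T}$ (no points of $H$ lie in $(T,d^{'}_{T})$); your remark on the non-uniqueness of the decomposition correctly disposes of the only subtlety.
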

\begin{proof}
Remark that $dA$ is carried by $H$ since $X\in\mathcal{M}(H)$ and  $dv$ is already carried by $H$. Hence, for $d^{'}_{T}=\inf\{s>T:s\in H\}$, we obtain that $A_{d^{'}_{T}}=A_{T}$. Which implies the result.
\end{proof}

Recall that $\gamma$ and $g$ are honest times. Hence, for all $(\mathcal{F}_{t})_{t\geq0}$ stopping time $T$, we have: $\P(\gamma=T)=0$ and $\P(g=T)=0$. Let $(\mathcal{G}^{\gamma}_{t})_{t\geq0}$ and $(\mathcal{G}^{g}_{t})_{t\geq0}$ be predictable enlarged filtrations respectively with respect to $\gamma$ and $g$. On another hand, it is known that for any honest time $\Gamma$, there exists an optional random closed set $H^{'}$ such that  $\Gamma=\sup\{H^{'}\}$. In the following corollary, we show that processes $X_{\cdot+\gamma}$ and $X_{\cdot+g}$ are relative martingales respectively under filtrations  $(\mathcal{G}^{\gamma}_{t+\gamma})_{t\geq0}$ and $(\mathcal{G}^{g}_{t+g})_{t\geq0}$ with respect to random sets $H^{'}$ and $H^{''}$ satisfying $g-\gamma=\sup\{H^{'}\}$ and $\gamma-g=\sup\{H^{''}\}$.

\begin{coro}\label{c8}
Let $X=m+v+A$ be a process of the class $\Sigma^{r}(H)$ such that $m$ is a true martingale and $\lim_{t\to+\infty}{X_{t}}$ exists. Let $\lim_{t\to+\infty}{X_{t}}=X_{\infty}$, $g=\sup\{t\geq0:X_{t}=0\}$ and  $\gamma=\sup\{t\geq0:t\in H\}$. Hence, for all stopping time $0<T<\infty$, we have:
\begin{equation}
	X_{T+\gamma}=\E\left[X_{\infty}1_{\{g-\gamma<T\}}|\mathcal{G}^{\gamma}_{T+\gamma}\right]
\end{equation}
and 
\begin{equation}
	X_{T+g}=\E\left[X_{\infty}1_{\{\gamma-g<T\}}|\mathcal{G}^{g}_{T+g}\right].
\end{equation}
\end{coro}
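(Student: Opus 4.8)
The plan is to derive each of the two identities from one of the representation results already proved --- Proposition~\ref{p2} for the first and Proposition~\ref{p3} for the second --- applied not to $X$ itself but to $X$ restarted at the honest time $\gamma$ (resp. at $g$), in the corresponding progressively enlarged filtration. The gain from restarting is that it trivialises one of the two correction terms appearing in those formulas.

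For the first identity, set $\widehat{X}_t:=X_{\gamma+t}$ and $\widehat{\mathcal{F}}_t:=\mathcal{G}^{\gamma}_{t+\gamma}$. Since $dv$ is carried by $H$ and $\gamma=\sup H$, the process $v$ is constant on $[\gamma,+\infty)$, so the ``$v$''-component of $\widehat{X}$ is frozen; and, $\gamma$ being $\sup H$, the set $\{t\ge0:D_{\gamma+t}=0\}$ reduces to $\{0\}$, so the honest time playing the role of ``$\gamma$'' for $\widehat{X}$ equals $0$. Granting that in the filtration $(\widehat{\mathcal{F}}_t)$ the process $\widehat{X}$ is again of the class $\Sigma^{r}(\cdot)$ with a true martingale part, with $\lim_t\widehat{X}_t=X_\infty$ and with $\sup\{t\ge0:\widehat{X}_t=0\}=g-\gamma$, Proposition~\ref{p2} applied to $\widehat{X}$ gives, for every stopping time $0<T<\infty$ (so that $T+\gamma$ is a $(\mathcal{G}^{\gamma}_t)$-stopping time),
$$
X_{T+\gamma}=\widehat{X}_T=\E\big[X_\infty\,1_{\{g-\gamma<T\}}\,\big|\,\widehat{\mathcal{F}}_T\big]+\E\big[(\widehat{v}_T-\widehat{v}_{\widehat{d}_T})\,1_{\{0>T\}}\,\big|\,\widehat{\mathcal{F}}_T\big]=\E\big[X_\infty\,1_{\{g-\gamma<T\}}\,\big|\,\mathcal{G}^{\gamma}_{T+\gamma}\big],
$$
the middle term vanishing because $T>0$.

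The second identity is obtained symmetrically. With $\widetilde{X}_t:=X_{g+t}$ and $\widetilde{\mathcal{F}}_t:=\mathcal{G}^{g}_{t+g}$, it is now $A$ that is constant on $[g,+\infty)$ (because $dA$ is carried by $\{X=0\}$ and $g=\sup\{t:X_t=0\}$), so that the ``$A$''-correction in Proposition~\ref{p3} vanishes (the finite-variation component of $\widetilde{X}$ being constant); and since the honest time attached to $\widetilde{X}$ equals $\sup\{t\ge0:D_{g+t}=0\}=\gamma-g$, Proposition~\ref{p3} applied to $\widetilde{X}$ yields $X_{T+g}=\E[X_\infty\,1_{\{\gamma-g<T\}}\,|\,\mathcal{G}^{g}_{T+g}]$.

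The step I expect to be the real obstacle --- and the only non-bookkeeping part --- is exactly the one invoked above without proof: that after the honest time $\gamma$ (resp. $g$) the restarted process $\widehat{X}$ (resp. $\widetilde{X}$) is genuinely of the class $\Sigma^{r}(\cdot)$ in the enlarged filtration, with a \emph{true} martingale part; equivalently, that the progressive enlargement does not manufacture an extra finite-variation component that fails to be carried by the relevant zero set. This is delicate --- it is false for a general honest time --- and must be established here from the theory of progressive enlargement of filtrations combined with the specific nature of $\gamma$ and $g$ as last-exit times (of the continuous martingale $D$, resp. of $X$), via the explicit form of the associated Azema supermartingales. Finally, the degenerate events $\{g<\gamma\}$ and $\{\gamma<g\}$ should be treated separately: on them the restarted process has no zero, the indicator equals $1$ for every $T>0$, and the identity collapses to the martingale property of the restarted process.
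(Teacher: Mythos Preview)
Your route is genuinely different from the paper's, and more elaborate than necessary. The paper does not restart $X$ at $\gamma$ or $g$, and never needs the restarted process to belong to any class in any filtration. It simply evaluates the representation formulas of Propositions~\ref{p2} and~\ref{p3} at the random time $T+\gamma$ (resp.\ $T+g$): since $T>0$, the indicator $1_{\{\gamma>T+\gamma\}}$ (resp.\ $1_{\{g>T+g\}}$) is identically zero, so the correction term drops out and one is left with $X_{T+\gamma}=\E[X_\infty\,1_{\{g<T+\gamma\}}\mid\mathcal{F}_{T+\gamma}]$ (and symmetrically for the second identity). The conditioning $\sigma$-field is then identified with $\mathcal{G}^{\gamma}_{T+\gamma}$ by quoting Lemma~5.7 of Jeulin~\cite{jeu}.

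The obstacle you isolate in your own argument is real: showing that $\widehat{X}$ (resp.\ $\widetilde{X}$) is again of class $\Sigma^{r}(\cdot)$ with a \emph{true} martingale part in the progressively enlarged filtration is a substantive statement about enlargement (a Jeulin--Yor decomposition computation specific to the honest times $\gamma$ and $g$), and you have not supplied it. The paper's direct substitution bypasses this completely; the only delicate point in the paper's argument is why the conclusion of Proposition~\ref{p2} may be invoked at $T+\gamma$, which is not an $(\mathcal{F}_t)$-stopping time, and this is precisely what the Jeulin identification of $\sigma$-fields is meant to absorb. Your strategy could be made to work, but it turns what the paper treats as a two-line substitution into an open enlargement-of-filtrations lemma.
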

\begin{proof}
According to Proposition \ref{p2} and Proposition \ref{p3}, we have:
\begin{equation}
	X_{T}=\E\left[X_{\infty}1_{\{g<T\}}|\mathcal{F}_{T}\right]+\E\left[(v_{T}-v_{d_{T}})1_{\{\gamma>T\}}|\mathcal{F}_{T}\right]
\end{equation}
and 
\begin{equation}
	X_{T}=\E\left[X_{\infty}1_{\{\gamma<T\}}|\mathcal{F}_{T}\right]+\E\left[(A_{T}-A_{d^{'}_{T}})1_{\{g>T\}}|\mathcal{F}_{T}\right].
\end{equation}
Hence, we obtain the following:
$$X_{T+\gamma}=\E\left[X_{\infty}1_{\{g<T+\gamma\}}|\mathcal{F}_{T+\gamma}\right]$$
and
$$X_{T+g}=\E\left[X_{\infty}1_{\{\gamma<T+g\}}|\mathcal{F}_{T+g}\right].$$
But we know from Lemma 5.7 of \cite{jeu} that $\mathcal{F}_{T+\gamma}=\mathcal{G}^{\gamma}_{T+\gamma}$ and $\mathcal{F}_{T+\gamma}=\mathcal{G}^{g}_{T+g}$. Which implies the result.
\end{proof}

\begin{coro}\label{c9}
Let $M$ be a positive process of the class $\mathcal{M}(H)$ which has no negative jump such that $\lim_{t\to+\infty}{M_{t}}=0$. Consider a real $k>0$ and define $g_{k}=\sup\{t\geq0:M_{t}\geq k\}$. Hence, for all stopping time $T<\infty$, we have:
\begin{equation}
	\P\left[g_{k}-\gamma>T|\mathcal{F}_{T+\gamma}\right]=1\wedge\left(\frac{M_{T+\gamma}}{k}\right) 
\end{equation}
in particular
\begin{equation}
	\P\left[g_{k}>\gamma|\mathcal{F}_{\gamma}\right]=1\wedge\left(\frac{M_{\gamma}}{k}\right).
\end{equation}
\end{coro}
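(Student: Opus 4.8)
The plan is to apply the first identity of Corollary~\ref{c8} to the bounded process
$$N_{t}:=(k-M_{t})^{+}=k-M_{t}\wedge k.$$
First I would record the elementary facts that $0\le N\le k$; that $\{t\ge0:N_{t}=0\}=\{t\ge0:M_{t}\ge k\}$, so that in the notation of Corollary~\ref{c8} one has $g=\sup\{t\ge0:N_{t}=0\}=g_{k}$; that $N_{T+\gamma}=(k-M_{T+\gamma})^{+}$; and that $\lim_{t}N_{t}=(k-\lim_{t}M_{t})^{+}=k$ exists because $\lim_{t}M_{t}=0<k$ (note also that this forces $g_{k}<\infty$ a.s.).

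The technical heart is to check that $N$ satisfies the hypotheses of Corollary~\ref{c8}, i.e.\ that $N=m_{N}+v_{N}+A_{N}$ with $m_{N}$ a true martingale, $dv_{N}$ carried by $H$ and $dA_{N}$ carried by $\{N=0\}$. Since $M$ has no negative jump, $k-M$ has no positive jump and $N=(k-M)^{+}$, so one may run the argument of Lemma~\ref{l2}, item 2), with $k-M$ in the role of $X$ (the hypothesis $X_{0}=0$ is never used there, and $N_{0}=k-M_{0}\wedge k$ is the only clause of Definition~\ref{d1} that $N$ may fail -- which is harmless, since the proof of Proposition~\ref{p2}, hence of Corollary~\ref{c8}, never invokes $X_{0}=0$); equivalently, applying the Tanaka--Meyer formula to $M\wedge k$ and writing $M=m+v$ with $m$ a local martingale and $dv$ carried by $H$, one gets
$$N_{t}=N_{0}-\int_{0}^{t}{1_{\{M_{s-}<k\}}\,dM_{s}}+\tfrac12 L^{k}_{t}(M)+C_{t},$$
where $C$ collects the jump corrections. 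The point of ``no negative jump'' is that a jump of $M$ across level $k$ can only carry $M$ from $\{M_{s-}<k\}$ to $\{M_{s}\ge k\}$, i.e.\ it occurs at a time $s$ with $N_{s}=0$; hence $dC$, like $dL^{k}(M)$ (carried by $\{M=k\}\subset\{N=0\}$), is carried by $\{N=0\}$. This exhibits $N=m_{N}+v_{N}+A_{N}$ with $m_{N}=-\int_{0}^{\cdot}{1_{\{M_{s-}<k\}}\,dm_{s}}$ a local martingale, $v_{N}=-\int_{0}^{\cdot}{1_{\{M_{s-}<k\}}\,dv_{s}}$ carried by $H$, and $A_{N}=\tfrac12 L^{k}(M)+C$ carried by $\{N=0\}$ (and nondecreasing, as it must be by Remark~\ref{r2.1} since $N\ge0$). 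Since $N$ is bounded, one checks that $m_{N}$ is in fact a true martingale; thus Corollary~\ref{c8} applies to $N$.

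Granting this, the first identity of Corollary~\ref{c8} gives, for every stopping time $0<T<\infty$,
$$N_{T+\gamma}=\E\left[N_{\infty}1_{\{g_{k}-\gamma<T\}}|\mathcal{G}^{\gamma}_{T+\gamma}\right]=k\,\P\left[g_{k}-\gamma<T|\mathcal{G}^{\gamma}_{T+\gamma}\right],$$
and since $\mathcal{G}^{\gamma}_{T+\gamma}=\mathcal{F}_{T+\gamma}$ (Lemma~5.7 of~\cite{jeu}) while $N_{T+\gamma}=(k-M_{T+\gamma})^{+}$, this rewrites as $\P[g_{k}-\gamma<T|\mathcal{F}_{T+\gamma}]=(1-M_{T+\gamma}/k)^{+}=1-1\wedge(M_{T+\gamma}/k)$. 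Passing to complements and using that the honest time $g_{k}$ does not charge the $(\mathcal{G}^{\gamma}_{t})$-stopping time $T+\gamma$ (so $\P(g_{k}-\gamma=T|\mathcal{F}_{T+\gamma})=0$), we obtain $\P[g_{k}-\gamma>T|\mathcal{F}_{T+\gamma}]=1\wedge(M_{T+\gamma}/k)$. Finally, letting $T\downarrow0$ along deterministic times, $\{g_{k}-\gamma>T\}$ increases to $\{g_{k}>\gamma\}$, $M_{T+\gamma}\to M_{\gamma}$ by right-continuity and $\mathcal{F}_{T+\gamma}\downarrow\mathcal{F}_{\gamma}$; a passage to the limit (reverse martingale convergence) yields the ``in particular'' statement.

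The main obstacle is the middle step: carrying out the Tanaka--Meyer decomposition of $(k-M)^{+}$ and checking that no finite-variation contribution escapes $\{N=0\}\cup H$ -- this is exactly what the hypothesis ``$M$ has no negative jump'' buys, since it forces every jump correction onto $\{N=0\}$ rather than producing a spurious drift off the zero set -- together with the (routine but genuinely necessary) verification that the martingale part $m_{N}$ of the bounded process $N$ is a true martingale. Everything after that reduces to bookkeeping around Corollary~\ref{c8} and the honest-time null-set facts already recalled in the text.
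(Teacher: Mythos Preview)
Your proof is correct and follows essentially the same approach as the paper: apply Corollary~\ref{c8} to $X=(k-M)^{+}$, which the paper obtains as an element of $\Sigma^{r}(H)$ by a direct appeal to Lemma~\ref{l2}, and then solve for the conditional probability from $(k-M_{T+\gamma})^{+}=k\,\P[g_{k}-\gamma<T\mid\mathcal{F}_{T+\gamma}]$. You are simply more explicit than the paper about verifying the hypotheses of Corollary~\ref{c8} (redoing the Tanaka--Meyer decomposition instead of citing Lemma~\ref{l2}, checking that $m_{N}$ is a true martingale, handling the $X_{0}\neq0$ technicality) and about deriving the ``in particular'' clause via $T\downarrow0$.
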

\begin{proof}
We have from Lemma \ref{l2} that $X=(k-M)^{+}$ is a process of the class $\Sigma^{r}(H)$. Hence, by applying Corollary \ref{c8}, we get:
$$X_{T+\gamma}=\E\left[X_{\infty}1_{\{g_{k}-\gamma<T\}}|\mathcal{F}_{T+\gamma}\right].$$
That is,
$$(k-M_{T+\gamma})^{+}=k\P\left[g_{k}-\gamma<T|\mathcal{F}_{T+\gamma}\right].$$
Which implies the following:
$$\P\left[g_{k}-\gamma>T|\mathcal{F}_{T+\gamma}\right]=1-\left(1-\frac{M_{T+\gamma}}{k}\right)^{+}.$$
Consequently, we get:
$$\P\left[g_{k}-\gamma>T|\mathcal{F}_{T+\gamma}\right]=1\wedge\left(\frac{M_{T+\gamma}}{k}\right).$$
This completes the proof.
\end{proof}

\begin{coro}
Let $M=m+v$ be a positive process of the class $\mathcal{M}(H)$ such that $m$ is a uniformly integrable martingale which has no negative jump, $\langle M,D\rangle=0$ and $\lim_{t\to+\infty}{M_{t}}=0$. Consider a real $k>0$ and define $g_{k}=\sup\{t\geq0:M_{t}\geq k\}$. Hence, for all stopping time $T<\infty$, we have:
\begin{equation}
	\P\left[g_{k}-\gamma>T\right]=1\wedge\left(\frac{\E[M_{\gamma}]}{k}\right) 
\end{equation}
\end{coro}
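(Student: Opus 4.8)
The plan is to deduce the identity from Corollary~\ref{c9} by removing the conditioning, the two extra hypotheses---that $m$ is uniformly integrable and that $\langle M,D\rangle=0$---being precisely what guarantees that $M$ behaves like a genuine martingale beyond the honest time $\gamma$.

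\textbf{Step 1.} Since $M$ is a positive process of the class $\mathcal{M}(H)$ with no negative jump and $\lim_{t\to+\infty}M_{t}=0$, Corollary~\ref{c9} applies and gives, for every stopping time $T<\infty$,
$$\P\left[g_{k}-\gamma>T\,\big|\,\mathcal{F}_{T+\gamma}\right]=1\wedge\left(\frac{M_{T+\gamma}}{k}\right).$$
Taking expectations yields $\P[g_{k}-\gamma>T]=\E\big[1\wedge(M_{T+\gamma}/k)\big]$, so everything reduces to evaluating this last expectation.

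\textbf{Step 2.} I would prove that $(M_{t+\gamma})_{t\ge 0}$ is a uniformly integrable $(\mathcal{F}_{t+\gamma})_{t\ge 0}$-martingale. Writing $M=m+v$ with $\mathrm{d}v$ carried by $H$ and $\gamma=\sup H$, the process $v$ is constant on $[\gamma,+\infty)$, hence $M_{t+\gamma}=m_{t+\gamma}+v_{\gamma}$ for all $t\ge 0$. The honest time $\gamma=\sup\{t\ge 0:D_{t}=0\}$ has an Az\'ema supermartingale whose martingale part is a stochastic integral with respect to $D$; therefore the orthogonality $\langle M,D\rangle=\langle m,D\rangle=0$ forces $m$ to acquire no drift in the progressively enlarged filtration $(\mathcal{G}^{\gamma}_{t})_{t\ge 0}$, so that $m$ remains a uniformly integrable $(\mathcal{G}^{\gamma}_{t})_{t\ge 0}$-martingale. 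Since $\mathcal{G}^{\gamma}_{t+\gamma}=\mathcal{F}_{t+\gamma}$ (Lemma~5.7 of \cite{jeu}, as already used in Corollary~\ref{c8}), the process $(m_{t+\gamma})_{t\ge 0}$, and hence $(M_{t+\gamma})_{t\ge 0}$, is a uniformly integrable $(\mathcal{F}_{t+\gamma})_{t\ge 0}$-martingale; in particular $\E[M_{T+\gamma}]=\E[M_{\gamma}]$ for every stopping time $T<\infty$.

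\textbf{Step 3 and the main obstacle.} It remains to show that $\E\big[1\wedge(M_{T+\gamma}/k)\big]=1\wedge\big(\E[M_{\gamma}]/k\big)$ and to substitute this into Step~1. This is the delicate point: it cannot be obtained from Jensen's inequality (which only yields ``$\le$''), and one must instead exploit the structure established in Step~2---namely that $(M_{t+\gamma})_{t\ge 0}$ is a nonnegative martingale with no negative jump, vanishing at infinity, whose last passage time above the level $k$ is exactly $g_{k}-\gamma$---to identify both sides, for instance via Doob's maximal identity for $M_{\cdot+\gamma}$ combined with the conditional formula of Corollary~\ref{c9} read at all stopping times $T$. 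The other point requiring care is the enlargement-of-filtration argument in Step~2, where one must explicitly identify the martingale part of the Az\'ema supermartingale of $\gamma$ and invoke $\langle M,D\rangle=0$ to conclude that $m$ is still a martingale after $\gamma$.
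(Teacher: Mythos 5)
Your Steps 1 and 2 reproduce the skeleton of the paper's argument: apply Corollary \ref{c9}, take expectations, and use that $M_{\cdot+\gamma}$ is a uniformly integrable martingale in $(\mathcal{F}_{t+\gamma})_{t\geq0}$ so that $\E[M_{T+\gamma}]=\E[M_{\gamma}]$. (The paper asserts this martingale property in one line as a known feature of relative martingales, whereas you route it through the Az\'ema supermartingale of $\gamma$ and the hypothesis $\langle M,D\rangle=0$; either way this part is sound.) The problem is that you stop exactly where the proof has to be finished: you never establish
$$\E\left[1\wedge\frac{M_{T+\gamma}}{k}\right]=1\wedge\frac{\E[M_{T+\gamma}]}{k},$$
and you explicitly declare it ``the main obstacle'' without resolving it. An argument whose decisive step is left as an acknowledged open point is not a proof, so the proposal is incomplete.

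That said, your hesitation is well founded, and you should know that the paper's own proof does not close this gap either: it simply writes $\E\left(\E\left[1_{\{g_{k}-\gamma>T\}}|\mathcal{F}_{T+\gamma}\right]\right)=1\wedge\left(\E[M_{T+\gamma}]/k\right)$, i.e.\ it pushes the expectation inside the minimum. Since $x\mapsto 1\wedge(x/k)$ is concave, Jensen gives only $\E[1\wedge(M_{T+\gamma}/k)]\leq 1\wedge(\E[M_{T+\gamma}]/k)$, and equality fails in general: for a positive uniformly integrable martingale tending to $0$ a.s., dominated convergence forces $\E[1\wedge(M_{n+\gamma}/k)]\to0$ as $n\to\infty$, while $1\wedge(\E[M_{\gamma}]/k)$ is a fixed positive constant. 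Consistently, the stated conclusion cannot hold for all finite stopping times $T$, because $\P[g_{k}-\gamma>n]$ must decrease to $\P[g_{k}=\infty]=0$ whereas the right-hand side does not depend on $T$; the identity is only credible at $T=0$. So you have correctly located the weak point of the statement, but neither your attempt nor the paper's argument actually establishes the claimed equality.
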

\begin{proof}
We obtain from Corollary \ref{c9} that
$$\E\left(\E\left[1_{\{g_{k}-\gamma>T\}}|\mathcal{F}_{T+\gamma}\right]\right)=1\wedge\left(\frac{\E[M_{T+\gamma}]}{k}\right).$$
That is,
$$\P\left[g_{k}-\gamma>T\right]=1\wedge\left(\frac{\E[M_{T+\gamma}]}{k}\right).$$
But, $M_{\cdot+\gamma}$ is a martingale since $M\in\mathcal{M}(H)$. Hence, we get:
$$\P\left[g_{k}-\gamma>T\right]=1\wedge\left(\frac{\E[M_{\gamma}]}{k}\right).$$
\end{proof}

%%%%%%%%%%%%%%%%%%%%%%%%%%%%%%%%%%%%%%%%%%%%%%%%%%%%%%%%%%%%%%%%%%%%%%%%%%%%%%%%%%%%%%%%%%%%%%%%%%%%%%%%%%%%%%%%%%%%%%%%%%%%%%%%%%%%%%%%%%
\section{Interesting utilities of stochastic processes of the class \texorpdfstring{$\Sigma^{r}(H)$}{Sigma(r)}}\label{sec:5}

The purpose of the current section is to show that the stochastic processes studied in this paper could have good applications. For this, we propose to construct solutions for skew Brownian motion equations. More precisely, we construct solutions from continuous processes of the class $\Sigma^{r}(H)$ for the following equations:

\begin{equation}\label{1}
	X_{t}=x+B_{t}+(2\alpha-1)L_{t}^{0}(X)
\end{equation}
and
\begin{equation}\label{2}
	X_{t}=x+B_{t}+\int_{0}^{t}{(2\alpha(s)-1)dL_{s}^{0}(X)},
\end{equation}
where $B$ is a standard Brownian motion and $x=0$. It must be remarked that solutions had already been built from the processes of the class $(\Sigma)$ (see \cite{fjo}). This should not be seen as a redundancy because some processes of the class $\Sigma^{r}(H)$ are not elements of the class $(\Sigma)$. For instance, if there exists a Brownian motion $W$ which is independent of $B$ such that $H=\{t\geq0:W_{t}=0\}$, hence the geometric Itô-Mckean skew Brownian motion $X^{\delta}=\sqrt{1-\delta^{2}}B+\delta|W|$ and its absolute value, $|X^{\delta}|$ are such examples.

%%%%%%%%%%%%%%%%%%%%%%%%%%%%%%%%%%%%%%%%%%%%%%%%%%%%%%%%%%%%%%%%%%%%%%%%%%%%%%%%%%%%%%%%%
\subsection{Construction of solution from It\^o-Mckean skew brownian motion}

First, we use the absolute value of the geometric Itô-Mckean skew Brownian process $|X^{\delta}|=\left|\sqrt{1-\delta^{2}}B+\delta|W|\right|$. It is true that we presented this process above as an element of the class $\Sigma^{r}(H)$. But in reality, it is only when the process $W$ cancels on $H$ that $|X^{\delta}|\in\Sigma^{r}(H)$. Thus, we dissociate the construction of solutions using this process from those using the other processes of the class $\Sigma^{r}(H)$. 

We will therefore consider the following notations: we shall set $k^{W}$ and $Z_{1}$ to represent processes constructed in \eqref{zalpha} with respect to $W$ and $(k^{W}_{g_{t}}X^{\delta}_{t};t\geq0)$ respectively and, $Z_{2,\cdot}$ will be the process defined in \eqref{Zalpha} with respect to $(k^{W}_{g_{t}}X^{\delta}_{t};t\geq0)$. We shall also set $g_{t}=\sup{\{t\geq0:X^{\delta}_{t}=0\}}$.

\begin{prop}
The process $Y^{\delta}_{1,\cdot}$ defined by $\forall t\geq0$, $Y^{\delta}_{1,t}=Z_{1,t}k^{W}_{g_{t}}|X^{\delta}_{t}|$ is a weak solution of \eqref{1} with the parameter $\alpha$ and starting from 0. 
\end{prop}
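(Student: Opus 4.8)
The plan is to show that $Y^{\delta}_{1,\cdot}$ is a continuous process of the class $\Sigma^{r}(H)$ whose martingale part drives an equation of skew Brownian motion type, and then identify that equation with \eqref{1} by computing the bracket and the local time terms via Proposition \ref{pzalph}. First I would observe that $X^{\delta}=\sqrt{1-\delta^{2}}B+\delta|W|$ is a continuous process of the class $\mathcal{M}(H)$ (since $|W|$ has its local time carried by $H=\{W=0\}$ and $\delta|W|$ is the finite-variation part while $\sqrt{1-\delta^{2}}B$ plus the martingale part of $|W|$ is a local martingale), so by Lemma \ref{l8} (continuous case, predictable $k$) applied to the continuous semimartingale $X^{\delta}$, the process $T_{t}:=k^{W}_{g_{t}}X^{\delta}_{t}$ lies in $\Sigma^{r}(H)$; here one uses that $k^{W}$ is a bounded predictable process, being built from the excursions of $W$. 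Note $T$ is continuous with $\{t: T_{t}=0\}=\{t:X^{\delta}_{t}=0\}$ on the relevant set.

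Next I would apply Theorem \ref{th1} / Example \ref{ex3}: since $T\in\Sigma^{r}(H)$ is continuous, $|T|\in\Sigma^{r}(H)$, and by Tanaka
\[
|T_{t}|=\int_{0}^{t}{\mathrm{sign}(T_{s})\,dT_{s}}+L^{0}_{t}(T).
\]
Then I would apply Proposition \ref{pzalph} to the continuous semimartingale $|T|$ (equivalently, use that $Z_{1}$ is the $Z^{Y}$-process attached to $T$, with $\{Z_{1,s}=0\}=\{T_{s}=0\}=\{|T_{s}|=0\}$), which gives
\[
Y^{\delta}_{1,t}=Z_{1,t}|T_{t}|=\int_{0}^{t}{Z_{1,s}\,d|T_{s}|}+(2\alpha-1)L^{0}_{t}(Y^{\delta}_{1}).
\]
The term $\int_{0}^{t}Z_{1,s}\,d|T_{s}|$ is, by the Tanaka decomposition of $|T|$ and the fact that $dL^{0}(T)$ and the finite-variation part of $T$ are both carried by $\{T=0\}\subset\{Z_{1}=0\}$, reduced to a stochastic integral against the local-martingale part $m^{T}$ of $T$, hence $\beta_{t}:=\int_{0}^{t}Z_{1,s}\,d|T_{s}|$ is a continuous local martingale. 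Computing its bracket: since $Z_{1}^{2}=1$ off the zero set of $T$ and the zero set is Lebesgue-negligible for the quadratic variation, $\langle\beta\rangle_{t}=\langle |T|\rangle_{t}=\langle T\rangle_{t}=\langle m^{T}\rangle_{t}$; and $m^{T}$ is obtained from $X^{\delta}$ by the predictable balayage integral $\int k^{W}_{g_{s}}\,dX^{\delta}_{s}$, whose bracket is $\int (k^{W}_{g_{s}})^{2}\,d\langle X^{\delta}\rangle_{s}=\langle X^{\delta}\rangle_{t}=t$ (using $(k^{W})^{2}=1$ a.e. and $\langle X^{\delta}\rangle_{t}=(1-\delta^{2})t+\delta^{2}t=t$). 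So $\beta$ is a Brownian motion by Lévy's characterization, and setting $B':=\beta$ we get $Y^{\delta}_{1,t}=B'_{t}+(2\alpha-1)L^{0}_{t}(Y^{\delta}_{1})$, i.e. a weak solution of \eqref{1} with parameter $\alpha$ started from $0$ (note $Y^{\delta}_{1,0}=0$ since $X^{\delta}_{0}=0$).

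The main obstacle I anticipate is the bracket/local-time bookkeeping: one must carefully justify that multiplying by the excursion-sign processes $k^{W}$ and $Z_{1}$ does not alter the quadratic variation (the zero sets where these processes vanish carry no $d\langle\cdot\rangle$ mass) and that $L^{0}(Y^{\delta}_{1})$ really appears with coefficient $(2\alpha-1)$ — this is exactly the content of Proposition \ref{pzalph}, so the delicate point is checking its hypotheses apply to $|T|$ (continuous semimartingale, $Z_{1}$ built correctly from its excursions). A secondary subtlety is confirming that the balayage-type identities from Lemma \ref{l8} hold with $k^{W}$ indexed by $g_{t}=\sup\{s\le t: X^{\delta}_{s}=0\}$ rather than by the last zero of $W$; one uses that $k^{W}$ is constant on each excursion interval of $W$ away from $0$, and that the zeros of $X^{\delta}$ controlling $g_{t}$ interlace appropriately with those of $W$ on the set where $|X^{\delta}|\in\Sigma^{r}(H)$. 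Once these measure-theoretic points are settled, the identification with \eqref{1} is immediate from Lévy's theorem.
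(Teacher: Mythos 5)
Your overall strategy matches the paper's: balayage, Tanaka, the Ouknine--Bouhadou identity (Proposition \ref{pzalph}), and L\'evy's characterization. But there is a concrete error at the very first identification. You set $T_{t}=k^{W}_{g_{t}}X^{\delta}_{t}$ and then claim $Y^{\delta}_{1,t}=Z_{1,t}|T_{t}|$. This is false: $|T_{t}|=|k^{W}_{g_{t}}|\,|X^{\delta}_{t}|$, whereas the proposition's process is $Z_{1,t}\,k^{W}_{g_{t}}|X^{\delta}_{t}|$. On every excursion of $T$ where $k^{W}_{g_{\cdot}}=-1$ (which occurs with positive probability), $|T_{t}|=|X^{\delta}_{t}|=-k^{W}_{g_{t}}|X^{\delta}_{t}|$, so $Z_{1,t}|T_{t}|=-Y^{\delta}_{1,t}$ there. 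Your subsequent computation is a legitimate proof that $Z_{1}|T|$ is a weak solution of \eqref{1}, but that is a different process from $Y^{\delta}_{1}$: the excursion signs of $Y^{\delta}_{1}$ are the products $\zeta_{n}\epsilon_{n}$ (with $\epsilon_{n}$ the value of $k^{W}_{g_{\cdot}}$ on the $n$-th excursion), and for $\alpha\neq\tfrac12$ these do not have the law of i.i.d.\ signs with parameter $\alpha$, so the two processes do not even agree in law. The fix --- which is the paper's route --- is to never form $|k^{W}_{g}X^{\delta}|$: instead write the Tanaka decomposition of $|X^{\delta}|$ first, apply the balayage formula to the semimartingale $|X^{\delta}|$ to obtain the decomposition of $k^{W}_{g_{\cdot}}|X^{\delta}|$ itself, and only then apply Proposition \ref{pzalph} to that process (whose zero set coincides with that of $k^{W}_{g}X^{\delta}$, so $Z_{1}$ is its excursion-sign process).

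A secondary point: your bracket computation asserts $\langle\beta\rangle_{t}=t$ by arguing that the zero set of $T$ is negligible for $d\langle\cdot\rangle$. Note that this zero set contains $\{s:k^{W}_{g_{s}}=0\}$ in addition to $\{X^{\delta}=0\}$, and justifying its Lebesgue-negligibility requires an argument (the paper instead replaces $Z_{1}$ by the $\{-1,1\}$-valued process $k_{1,g_{\cdot}}$ via a second balayage identification before computing the bracket). You correctly flag this as the delicate point, but the first issue above is the one that actually breaks the proof as written.
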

\begin{proof}
By applying the balayage's formula in the progressive case, we get
$$k^{W}_{g_{t}}|X^{\delta}_{t}|=\int_{0}^{t}{^{p}(k^{W}_{g_{s}})d|X^{\delta}_{s}|}+R_{t},$$
where $R$ is a continuous process with finite variations such that $R_{0}=0$ and $dR$ is carried by ${\{t\geq0:X^{\delta}_{t}=0\}}$. Since $W$ is continuous, we have: $^{p}(k^{W}_{g_{s}})=k^{W}_{g_{s-}}=k^{W}_{s-}$. Thus, it follows from the continuity of $|X^{\delta}|$ that
$$k^{W}_{g_{t}}X^{\delta}_{t}=\int_{0}^{t}{k^{W}_{s}d|X^{\delta}_{s}|}+R_{t}.$$
However, we have from Tanaka's formula that
$$d|X^{\delta}_{s}|=sign(X^{\delta}_{s})\sqrt{1-\delta^{2}}dB_{s}+\delta sign(X^{\delta}_{s})sign(W_{s})dW_{s}+\delta sign(X^{\delta}_{s})dL_{s}^{0}(W)+dL_{s}^{0}(X^{\delta}).$$
Hence, we obtain:
$$k^{W}_{g_{t}}|X^{\delta}_{t}|=\sqrt{1-\delta^{2}}\int_{0}^{t}{sign(X^{\delta}_{s})k^{W}_{s}dB_{s}}+\delta\int_{0}^{t}{k^{W}_{s}sign(X^{\delta}_{s})[sign(W_{s})dW_{s}+dL_{s}^{0}(W)]}+\int_{0}^{t}{k^{W}_{s}dL_{s}^{0}(X^{\delta})}+R_{t}.$$
Which becomes
$$k^{W}_{g_{t}}|X^{\delta}_{t}|=\sqrt{1-\delta^{2}}\int_{0}^{t}{sign(X^{\delta}_{s})k^{W}_{s}dB_{s}}+\delta\int_{0}^{t}{k^{W}_{s}sign(X^{\delta}_{s})sign(W_{s})dW_{s}}+\int_{0}^{t}{k^{W}_{s}dL_{s}^{0}(X^{\delta})}+R_{t}$$
since 
$$\int_{0}^{t}{k^{W}_{s}sign(X^{\delta}_{s})dL_{s}^{0}(W)}=\int_{0}^{t}{k^{W}_{s-}sign(X^{\delta}_{s})dL_{s}^{0}(W)}=\int_{0}^{t}{Z^{W}_{s}sign(X^{\delta}_{s})dL_{s}^{0}(W)}=0.$$ 
Indeed, $L^{0}(W)$ is continuous and $dL^{0}(W)$ is carried by ${\{t\geq0:W_{t}=0\}}={\{t\geq0:Z^{W}_{t}=0\}}$. Hence, through Proposition 2.2 of \cite{siam}, we get
 $$Y^{\delta}_{1,t}=\sqrt{1-\delta^{2}}\int_{0}^{t}{sign(X^{\delta}_{s})Z_{1,s}k^{W}_{s}dB_{s}}+\delta\int_{0}^{t}{sign(X^{\delta}_{s})Z_{1,s}k^{W}_{s}sign(W_{s})dW_{s}}+\int_{0}^{t}{Z_{1,s}[k^{W}_{s}dL_{s}^{0}(X^{\delta})+dR_{s}]}$$
$$\hspace{-12cm}+(2\alpha-1)L_{t}^{0}(Y^{\delta}_{1,\cdot}).$$
But, $dR$ and $dL^{0}(X^{\delta})$ are carried by ${\{t\geq0:X^{\delta}_{t}=0\}}$. In addition, 
$$\{t\geq0:X^{\delta}_{t}=0\}\subset\{t\geq0:k^{W}_{g_{t}}X^{\delta}_{t}=0\}=\{t\geq0:Z_{1,t}=0\}.$$
Therefore, we get:
$$Y^{\delta}_{1,t}=\sqrt{1-\delta^{2}}\int_{0}^{t}{sign(X^{\delta}_{s})Z_{1,s}k^{W}_{s}dB_{s}}+\delta\int_{0}^{t}{sign(X^{\delta}_{s})Z_{1,s}k^{W}_{s}sign(W_{s})dW_{s}}+(2\alpha-1)L_{t}^{0}(Y^{\delta}_{1,\cdot}).$$
Now, remark that the process $M$ defined by $\forall t\geq0$, 
$$M_{t}=\sqrt{1-\delta^{2}}\int_{0}^{t}{sign(X^{\delta}_{s})Z_{1,s}k^{W}_{s}dB_{s}}+\delta\int_{0}^{t}{sign(X^{\delta}_{s})Z_{1,s}k^{W}_{s}sign(W_{s})dW_{s}}$$
 is a continuous local martingale. In addition, we can see that $Z_{1,t}k^{W}_{g_{t}}|X^{\delta}_{t}|=k_{1,g_{t}}k^{W}_{g_{t}}|X^{\delta}_{t}|$ where $k_{1,\cdot}$ is the progressive process defined in \eqref{kalpha} with respect to the process $(k^{W}_{g_{t}}|X^{\delta}_{t}|:t\geq0)$. Hence by applying the balayage formula in progressive case on $k_{1,g_{t}}[k^{W}_{g_{t}}|X^{\delta}_{t}|]$, we obtain by identification that
$$M_{t}=\sqrt{1-\delta^{2}}\int_{0}^{t}{sign(X^{\delta}_{s})k_{1,s}k^{W}_{s}dB_{s}}+\delta\int_{0}^{t}{sign(X^{\delta}_{s})k_{1,s}k^{W}_{s}sign(W_{s})dW_{s}}$$
On another hand, we have:
$$\langle M,M\rangle_{t}=(1-\delta^{2})\int_{0}^{t}{\left(sign(X^{\delta}_{s})k_{1,s}k^{W}_{s}\right)^{2}ds}+\delta^{2}\int_{0}^{t}{\left((sign(X^{\delta}_{s})k_{1,s}k^{W}_{s}sign(W_{s})\right)^{2}ds}.$$
Which implies: $\langle M,M\rangle_{t}=t$ because $k_{1,s}\in\{-1,1\}$, $k^{W}_{s}\in\{-1,1\}$, $sign(X^{\delta}_{s})\in\{-1,1\}$ and $sign(W_{s})\in\{-1,1\}$. Consequently, $M$ is a Brownian motion. This completes the proof.
\end{proof}

Now, we shall provide a solution for \eqref{2} when $\alpha$ is a piecewise constant function associated with a partition $(0=t_{0}<t_{1}<\cdots<t_{n-1}<t_{m})$, i.e., $\alpha$ is of the form
$$\alpha(t)=\sum_{i=0}^{m}{\alpha_{i}1_{[t_{i},t_{i+1})}(t)},$$
where $\alpha_{i}\in[0,1]$ for all $i=0,1,\cdots,m$.

\begin{prop}
The process $Y^{\delta}_{2,\cdot}$ defined by $\forall t\geq0$, $Y^{\delta}_{2,t}=Z_{2,t}k^{W}_{g_{t}}|X^{\delta}_{t}|$ is a weak solution of \eqref{2} with the parameter $\alpha$ and starting from 0. 
\end{prop}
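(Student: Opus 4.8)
The plan is to mirror exactly the proof strategy used for the previous proposition, replacing the single-parameter balayage result (Proposition 2.2 of \cite{siam}) with its piecewise-constant analogue, namely the Ouknine--Bouhadou proposition concerning the process $\mathcal{Z}^{Y}$ defined in \eqref{Zalpha}. First I would start from the balayage formula in the progressive case applied to $k^{W}_{g_{t}}|X^{\delta}_{t}|$, exactly as before: since $W$ is continuous, $^{p}(k^{W}_{g_{s}})=k^{W}_{s-}$, and by continuity of $|X^{\delta}|$ we obtain
$$k^{W}_{g_{t}}|X^{\delta}_{t}|=\int_{0}^{t}{k^{W}_{s}d|X^{\delta}_{s}|}+R_{t},$$
with $R$ continuous of finite variation, $R_{0}=0$, and $dR$ carried by $\{t\geq0:X^{\delta}_{t}=0\}$. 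Then I would substitute Tanaka's formula for $d|X^{\delta}_{s}|$ and kill the $dL^{0}_{s}(W)$ term by the same argument as before (it equals $\int_{0}^{t}Z^{W}_{s}\,sign(X^{\delta}_{s})\,dL^{0}_{s}(W)=0$ because $dL^{0}(W)$ is carried by $\{W=0\}=\{Z^{W}=0\}$).

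Next I would apply the second Ouknine--Bouhadou balayage proposition (the one for $\mathcal{Z}^{Y}$, with $Y=(k^{W}_{g_{t}}|X^{\delta}_{t}|:t\geq0)$ and $Z_{2}=\mathcal{Z}^{Y}$) to get
$$Y^{\delta}_{2,t}=\sqrt{1-\delta^{2}}\int_{0}^{t}{sign(X^{\delta}_{s})Z_{2,s}k^{W}_{s}dB_{s}}+\delta\int_{0}^{t}{sign(X^{\delta}_{s})Z_{2,s}k^{W}_{s}sign(W_{s})dW_{s}}+\int_{0}^{t}{Z_{2,s}[k^{W}_{s}dL_{s}^{0}(X^{\delta})+dR_{s}]}+\int_{0}^{t}{(2\alpha(s)-1)dL_{s}^{0}(Y^{\delta}_{2,\cdot})}.$$
Since $dR$ and $dL^{0}(X^{\delta})$ are carried by $\{X^{\delta}=0\}\subset\{k^{W}_{g_{t}}X^{\delta}_{t}=0\}=\{Z_{2,t}=0\}$, the third integral vanishes, leaving $Y^{\delta}_{2,t}=M_{t}+\int_{0}^{t}{(2\alpha(s)-1)dL_{s}^{0}(Y^{\delta}_{2,\cdot})}$ where $M$ is the local martingale formed by the first two stochastic integrals.

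Finally I would verify that $M$ is a Brownian motion. As in the previous proof, I would rewrite $Z_{2,t}k^{W}_{g_{t}}|X^{\delta}_{t}|=\mathcal{K}^{Y}_{g_{t}}k^{W}_{g_{t}}|X^{\delta}_{t}|$ with $\mathcal{K}^{Y}$ the process in \eqref{Kalpha} attached to $Y=(k^{W}_{g_{t}}|X^{\delta}_{t}|:t\geq0)$, apply the progressive balayage formula to $\mathcal{K}^{Y}_{g_{t}}[k^{W}_{g_{t}}|X^{\delta}_{t}|]$, and identify
$$M_{t}=\sqrt{1-\delta^{2}}\int_{0}^{t}{sign(X^{\delta}_{s})\mathcal{K}^{Y}_{s}k^{W}_{s}dB_{s}}+\delta\int_{0}^{t}{sign(X^{\delta}_{s})\mathcal{K}^{Y}_{s}k^{W}_{s}sign(W_{s})dW_{s}}.$$
Computing $\langle M,M\rangle_{t}$ and using that $\mathcal{K}^{Y}_{s}\in\{-1,1\}$, $k^{W}_{s}\in\{-1,1\}$, $sign(X^{\delta}_{s})\in\{-1,1\}$, $sign(W_{s})\in\{-1,1\}$, together with $dB_{s}$ and $dW_{s}$ being orthogonal each of quadratic variation $ds$, gives $\langle M,M\rangle_{t}=t$, so by L\'evy's characterization $M$ is a Brownian motion and $Y^{\delta}_{2,\cdot}$ is a weak solution of \eqref{2} starting from $0$. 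The main obstacle I anticipate is purely bookkeeping: making sure the piecewise-constant local-time decomposition from the $\mathcal{Z}^{Y}$-balayage proposition matches up with $\mathcal{K}^{Y}$ on the excursion intervals so that the identification of $M$ is legitimate, and confirming that $\{X^{\delta}=0\}=\{Y^{\delta}_{2}=0\}$ on the relevant support so that the local-time term really is driven by $2\alpha(s)-1$; the martingale and quadratic-variation computations themselves are routine.
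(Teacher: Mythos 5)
Your proposal follows essentially the same route as the paper's own proof: rewrite $k^{W}_{g_{t}}|X^{\delta}_{t}|$ via the progressive balayage formula and Tanaka's formula, apply the Ouknine--Bouhadou result for $\mathcal{Z}^{Y}$ (Proposition 2.3 of \cite{siam}) to produce the $\int_{0}^{t}(2\alpha(s)-1)\,dL^{0}_{s}(Y^{\delta}_{2,\cdot})$ term, kill the remaining finite-variation integrals using $\{X^{\delta}=0\}\subset\{Z_{2}=0\}$, and identify the martingale part with the $\mathcal{K}$-version via balayage so that $\langle M,M\rangle_{t}=t$ and L\'evy's characterization applies. Your explicit remark that the cross term vanishes because $B$ and $W$ are independent is a point the paper leaves implicit, but otherwise the two arguments coincide.
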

\begin{proof}
We have already shown in the above  last proof that
$$k^{W}_{g_{t}}|X^{\delta}_{t}|=\sqrt{1-\delta^{2}}\int_{0}^{t}{sign(X^{\delta}_{s})k^{W}_{s}dB_{s}}+\delta\int_{0}^{t}{k^{W}_{s}sign(X^{\delta}_{s})sign(W_{s})dW_{s}}+\int_{0}^{t}{k^{W}_{s}dL_{s}^{0}(X^{\delta})}+R_{t}.$$
Hence, we get from Proposition 2.3 of \cite{siam} that
 $$Y^{\delta}_{2,t}=\sqrt{1-\delta^{2}}\int_{0}^{t}{sign(X^{\delta}_{s})Z_{2,s}k^{W}_{s}dB_{s}}+\delta\int_{0}^{t}{sign(X^{\delta}_{s})Z_{2,s}k^{W}_{s}sign(W_{s})dW_{s}}+\int_{0}^{t}{Z_{2,s}[k^{W}_{s}dL_{s}^{0}(X^{\delta})+dR_{s}]}$$
$$\hspace{-10cm}+\int_{0}^{t}{(2\alpha(s)-1)dL_{s}^{0}(Y^{\delta}_{2,\cdot})}.$$
But, $dL_{\cdot}^{0}(X^{\delta})$ and $dR$ are carried by ${\{t\geq0:X^{\delta}_{t}=0\}}$. However, 
$$\{t\geq0:X^{\delta}_{t}=0\}\subset\{t\geq0:k^{W}_{g_{t}}X^{\delta}_{t}=0\}=\{t\geq0:Z_{2,t}=0\}.$$
Therefore,
$$Y^{\delta}_{2,t}=\sqrt{1-\delta^{2}}\int_{0}^{t}{sign(X^{\delta}_{s})Z_{2,s}k^{W}_{s}dB_{s}}+\delta\int_{0}^{t}{sign(X^{\delta}_{s})Z_{2,s}k^{W}_{s}sign(W_{s})dW_{s}}+\int_{0}^{t}{(2\alpha(s)-1)dL_{s}^{0}(Y^{\delta}_{2,\cdot})}.$$
Now, remark that the process $M^{'}$ defined by $\forall t\geq0$, 
$$M^{'}_{t}=\sqrt{1-\delta^{2}}\int_{0}^{t}{sign(X^{\delta}_{s})Z_{2,s}k^{W}_{s}dB_{s}}+\delta\int_{0}^{t}{sign(X^{\delta}_{s})Z_{2,s}k^{W}_{s}sign(W_{s})dW_{s}}$$
is a continuous local martingale. In addition, we can see that $Z_{2,t}k^{W}_{g_{t}}|X^{\delta}_{t}|=k_{2,g_{t}}k^{W}_{g_{t}}|X^{\delta}_{t}|$ where $k_{2,\cdot}$ is the progressive process defined in \eqref{Kalpha} with respect to the process $(k^{W}_{g_{t}}|X^{\delta}_{t}|:t\geq0)$. Hence by applying the balayage formula in the progressive case on $k_{2,g_{t}}[k^{W}_{g_{t}}|X^{\delta}_{t}|]$, we obtain by identification that
$$M^{'}_{t}=\sqrt{1-\delta^{2}}\int_{0}^{t}{sign(X^{\delta}_{s})k_{2,s}k^{W}_{s}dB_{s}}+\delta\int_{0}^{t}{sign(X^{\delta}_{s})k_{2,s}k^{W}_{s}sign(W_{s})dW_{s}}$$
On another hand, we have:
$$\langle M^{'},M^{'}\rangle_{t}=(1-\delta^{2})\int_{0}^{t}{\left(sign(X^{\delta}_{s})k_{2,s}k^{W}_{s}\right)^{2}ds}+\delta^{2}\int_{0}^{t}{\left((sign(X^{\delta}_{s})k_{2,s}k^{W}_{s}sign(W_{s})\right)^{2}ds}.$$
Which implies: $\langle M^{'},M^{'}\rangle_{t}=t$ because $k_{2,s}\in\{-1,1\}$, $k^{W}_{s}\in\{-1,1\}$, $sign(X^{\delta}_{s})\in\{-1,1\}$ and $sign(W_{s})\in\{-1,1\}$. Consequently, $M^{'}$ is a Brownian motion. This completes the proof.
\end{proof}

%%%%%%%%%%%%%%%%%%%%%%%%%%%%%%%%%%%%%%%%%%%%%%%%%%%%%%%%%%%%%%%%%%%%%%%%%%%%%%%%%%%%%%%%%
\subsection{Construction of solutions from a continuous process of the class \texorpdfstring{$\Sigma^{r}(H)$}{Sigma(r)}}

Now, we shall derive solutions by using continuous processes of the class $\Sigma^{r}(H)$. Thus, we shall use next notations: for any continuous process $X$ of the last mentioned class, we let $g_{t}=\sup\{s\leq t:X_{t}=0\}$ and $\tau_{t}=\inf\{s\geq0:\langle X,X\rangle_{s}>t\}$. Let $k^{D}$ and $k_{1,\cdot}$ be progressive processes defined in \eqref{kalpha} with respect to $D$ and $(k^{D}_{g_{\tau_{t}}}X_{\tau_{t}}:t\geq0)$ respectively. $Z_{2}$ is the progressive process defined in \eqref{Zalpha} with respect to $(k^{D}_{g_{\tau_{t}}}X_{\tau_{t}}:t\geq0)$. $k_{2}$ will denotes the progressive process defined in \eqref{Kalpha} with respect to $(k^{D}_{g_{\tau_{t}}}X_{\tau_{t}}:t\geq0)$. 

\begin{prop}
The process $\mathcal{Y}_{1}$ defined by $\forall t\geq0$, $\mathcal{Y}_{1,t}=Z_{1,t}k^{D}_{g_{\tau_{t}}}X_{\tau_{t}}$ is a weak solution of \eqref{1} with the parameter $\alpha$ and starting from 0. 
\end{prop}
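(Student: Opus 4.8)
The plan is to repeat, with the time change $\tau$ inserted, the argument used just above for $Y^{\delta}_{1,\cdot}$; the role of $\tau$ is to normalise the bracket so that the continuous local martingale obtained at the end has $\langle\cdot,\cdot\rangle_{t}=t$. Write $X=m+v+A$ as in Definition \ref{d1}, so $\langle X,X\rangle=\langle m,m\rangle$; assuming $\langle X,X\rangle_{\infty}=\infty$ (and $\langle X,X\rangle$ strictly increasing, so that $\tau$ is a continuous increasing bijection) one has $\langle X,X\rangle_{\tau_{t}}=t$, hence by Dambis--Dubins--Schwarz $\beta:=m_{\tau_{\cdot}}$ is a Brownian motion for $(\mathcal{F}_{\tau_{t}})_{t\ge0}$, and $X_{\tau_{\cdot}}=\beta+v_{\tau_{\cdot}}+A_{\tau_{\cdot}}$ with $v_{\tau_{\cdot}},A_{\tau_{\cdot}}$ continuous of finite variation, $dv_{\tau_{\cdot}}$ carried by $\{D_{\tau_{\cdot}}=0\}$ and $dA_{\tau_{\cdot}}$ carried by $\{X_{\tau_{\cdot}}=0\}$. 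Moreover $g_{\tau_{t}}=\tau_{\tilde g_{t}}$, where $\tilde g_{t}=\sup\{s\le t:X_{\tau_{s}}=0\}$, and $s\mapsto k^{D}_{\tau_{s}}$ is exactly the process of \eqref{kalpha} attached to the continuous martingale $D_{\tau_{\cdot}}$ (same variables $\zeta_{n}$). Hence $N_{t}:=k^{D}_{g_{\tau_{t}}}X_{\tau_{t}}=k^{D}_{\tau_{\tilde g_{t}}}X_{\tau_{t}}$ is the balayage of the excursion process of $D_{\tau_{\cdot}}$ along the last zero of the continuous semimartingale $X_{\tau_{\cdot}}$, and we are back in the situation of the preceding proof.

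Next I would apply the balayage formula (Proposition \ref{balpro}) to $N$; the predictable-projection step (the Remark following Proposition \ref{balpro}) turns the integrand into the left-continuous modification $k^{D}_{\tau_{s}-}$, which vanishes on $\{D_{\tau_{\cdot}}=0\}$ apart from the countable set of right endpoints of the excursion intervals of $D_{\tau_{\cdot}}$, hence $dv_{\tau_{\cdot}}$-a.e.\ since $v$ is continuous. One gets $N_{t}=\int_{0}^{t}k^{D}_{\tau_{s}-}\,d\beta_{s}+\int_{0}^{t}k^{D}_{\tau_{s}-}\,dA_{\tau_{s}}+R_{t}$ with $dR$ carried by $\{X_{\tau_{\cdot}}=0\}$, the $dv_{\tau_{\cdot}}$-term having dropped out. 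Writing $\Gamma:=\int_{0}^{\cdot}k^{D}_{\tau_{s}-}\,dA_{\tau_{s}}+R$, which is carried by $\{X_{\tau_{\cdot}}=0\}\subset\{N=0\}$, and applying Proposition \ref{pzalph} to the continuous semimartingale $N$ with the process $Z_{1}$ of \eqref{zalpha} attached to $N$, one obtains
$$\mathcal{Y}_{1,t}=Z_{1,t}N_{t}=\int_{0}^{t}Z_{1,s}k^{D}_{\tau_{s}-}\,d\beta_{s}+\int_{0}^{t}Z_{1,s}\,d\Gamma_{s}+(2\alpha-1)L^{0}_{t}(\mathcal{Y}_{1,\cdot}),$$
and the $d\Gamma$-integral vanishes because $Z_{1}\equiv0$ on $\{N=0\}$. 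Thus $\mathcal{Y}_{1,t}=M_{t}+(2\alpha-1)L^{0}_{t}(\mathcal{Y}_{1,\cdot})$ with $M_{t}:=\int_{0}^{t}Z_{1,s}k^{D}_{\tau_{s}-}\,d\beta_{s}$ a continuous local martingale null at $0$, and $\mathcal{Y}_{1,0}=0$.

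It remains to show that $M$ is a Brownian motion. Exactly as in the model proof I would write $Z_{1,s}N_{s}=k_{1,g^{N}_{s}}N_{s}$, with $k_{1}$ the process of \eqref{kalpha} attached to $N$, and re-apply the balayage formula to identify $M_{t}=\int_{0}^{t}k_{1,s}k^{D}_{\tau_{s}-}\,d\beta_{s}$, whence $\langle M,M\rangle_{t}=\int_{0}^{t}k_{1,s}^{2}(k^{D}_{\tau_{s}-})^{2}\,ds$. Both factors are bounded by $1$ and equal $1$ off $\{N=0\}$ and $\{D_{\tau_{\cdot}}=0\}$ respectively, so $\langle M,M\rangle_{t}=t$ once these two sets are Lebesgue-negligible; this is the step I expect to be the real obstacle. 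The negligibility of $\{s:X_{\tau_{s}}=0\}$ is immediate from the occupation-time formula since $\langle X_{\tau_{\cdot}}\rangle_{t}=t$, and from this --- using that $dv$ lives on $H$, so that the excursions of $X_{\tau_{\cdot}}$ sitting over $\{D_{\tau_{\cdot}}=0\}$ do not charge the bracket --- one must deduce that $\{s:N_{s}=0\}$ and $\{s:D_{\tau_{s}}=0\}$ are negligible as well. Granting this, $M$ is a Brownian motion, and $\mathcal{Y}_{1,\cdot}=M+(2\alpha-1)L^{0}(\mathcal{Y}_{1,\cdot})$ with $\mathcal{Y}_{1,0}=0$ is a weak solution of \eqref{1} with parameter $\alpha$, which completes the proof.
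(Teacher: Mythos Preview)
Your proof follows essentially the same route as the paper's, with only two cosmetic differences. First, the paper applies balayage in the \emph{original} time scale --- obtaining $k^{D}_{g_{t}}X_{t}=\int_{0}^{t}k^{D}_{s}\,dm_{s}+\int_{0}^{t}k^{D}_{s}\,dA_{s}+R_{t}$ after killing the $dv$-term via the substitution $\int k^{D}_{s}\,dv_{s}=\int Z^{D}_{s}\,dv_{s}=0$ --- and only then time-changes by $\tau$; you time-change first and then balayage $X_{\tau_{\cdot}}$. Second, the ``real obstacle'' you flag does not appear in the paper: it writes $W_{t}=\int_{0}^{\tau_{t}}k_{1,\langle X,X\rangle_{s}}k^{D}_{s}\,dm_{s}$ and asserts directly $\langle W,W\rangle_{t}=\int_{0}^{\tau_{t}}(k_{1,\langle X,X\rangle_{s}}k^{D}_{s})^{2}\,d\langle m,m\rangle_{s}=\langle m,m\rangle_{\tau_{t}}=t$, invoking $k_{1,s}\in\{-1,1\}$ and $k^{D}_{s}\in\{-1,1\}$ exactly as in the $Y^{\delta}_{1}$-proof. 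In other words, the paper reads definition \eqref{kalpha} as producing a $\{-1,1\}$-valued process, so no negligibility argument is needed; under that convention your proof goes through verbatim. Your concern is not unreasonable --- strictly speaking $k^{Y}_{s}$ can vanish on the part of $\{Y=0\}$ that is not a left excursion endpoint --- but the paper does not address it.
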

\begin{proof}
Let $X=m+v+A$ be a continuous process of the class $\Sigma^{r}(H)$. By applying the balayage formula in the progressive case, we get
$$k^{D}_{g_{t}}X_{t}=\int_{0}^{t}{^{p}(k^{D}_{g_{s}})dX_{s}}+R_{t},$$
where $R$ is a continuous process with finite variations such that $R_{0}=0$ and $dR$ is carried by ${\{t\geq0:X_{t}=0\}}$. Since $D$ is continuous, we have: $^{p}(k^{D}_{g_{s}})=k^{D}_{g_{s-}}=k^{D}_{s-}$. Thus, it follows from the continuity of $X$ that
$$k^{D}_{g_{t}}X_{t}=\int_{0}^{t}{k^{D}_{s}dX_{s}}+R_{t}=\int_{0}^{t}{k^{D}_{s}dm_{s}}+\int_{0}^{t}{k^{D}_{s}dv_{s}}+\int_{0}^{t}{k^{D}_{s}dA_{s}}+R_{t}.$$
However, we have 
$$\int_{0}^{t}{k^{D}_{s}dv_{s}}=\int_{0}^{t}{Z^{D}_{s}dv_{s}},$$
where $Z^{D}$ is the progressive process defined in \eqref{zalpha} with respect to $D$. Thus, we obtain that
$$\int_{0}^{t}{k^{D}_{s}dv_{s}}=0$$
because $dv$ is carried by $\{t\geq0:D_{t}=0\}=\{t\geq0:Z^{D}_{t}=0\}$.
Which becomes
\begin{equation}\label{ee}
k^{D}_{g_{t}}X_{t}=\int_{0}^{t}{k^{D}_{s}dm_{s}}+\int_{0}^{t}{k^{D}_{s}dA_{s}}+R_{t}.
\end{equation}
Now, let $Y_{t}=k^{D}_{g_{\tau_{t}}}X_{\tau_{t}}$. We obtain by applying Proposition 2.2 of \cite{siam}, the following:
$$\mathcal{Y}_{1,t}=Z_{1,t}Y_{t}=\int_{0}^{t}{Z_{1,s}dY_{s}}+(2\alpha-1)L_{t}^{0}(\mathcal{Y}_{1,\cdot}).$$
But,
$$\int_{0}^{t}{Z_{1,s}dY_{s}}=\int_{0}^{t}{Z_{1,s}k^{D}_{\tau_{s}}dm_{\tau_{s}}}+\int_{0}^{t}{Z_{1,s}k^{D}_{\tau_{s}}dA_{\tau_{s}}}+\int_{0}^{t}{Z_{1,s}dR_{\tau_{s}}}.$$
Hence, we obtain:
$$\int_{0}^{t}{Z_{1,s}dY_{s}}=\int_{0}^{t}{Z_{1,s}k^{D}_{\tau_{s}}dm_{\tau_{s}}}$$
because, $dA_{\tau_{\cdot}}$ and $dR_{\tau_{\cdot}}$ are carried by $\{t\geq0: Z_{1,t}=0\}$. Which implies that
$$\mathcal{Y}_{1,t}=\int_{0}^{t}{Z_{1,s}k^{D}_{\tau_{s}}dm_{\tau_{s}}}+(2\alpha-1)L_{t}^{0}(\mathcal{Y}_{1,\cdot}).$$
However, the process $W$ defined by $\forall t\geq0$, 
$$W_{t}=\int_{0}^{t}{Z_{1,s}k^{D}_{\tau_{s}}dm_{\tau_{s}}}=\int_{0}^{\tau_{t}}{Z_{1,\langle X,X\rangle_{s}}k^{D}_{s}dm_{s}}$$
is a continuous local martingale. Furthermore, by applying the balayage formula on $k_{1,g^{'}_{t}}Y_{t}$, we obtain by identification that
$$W_{t}=\int_{0}^{\tau_{t}}{k_{1,\langle X,X\rangle_{s}}k^{D}_{s}dm_{s}}$$
since  $Z_{1,t}Y_{t}=k_{1,g^{'}_{t}}Y_{t}$ with $g^{'}_{t}=\sup\{s\leq t:Y_{t}=0\}$. On another hand, we have:
$$\langle W,W\rangle_{t}=\int_{0}^{\tau_{t}}{\left(k_{1,\langle X,X\rangle_{s}}k^{D}_{s}\right)^{2}d\langle m,m\rangle_{s}}=\langle m,m\rangle_{\tau_{t}}=t.$$
Consequently, $W$ is a Brownian motion. This completes the proof.
\end{proof}

\begin{prop}
The process $\mathcal{Y}_{2}$ defined by $\forall t\geq0$, $\mathcal{Y}^{2}_{t}=Z_{2,t}k^{D}_{g_{\tau_{t}}}X_{\tau_{t}}$ is a weak solution of \eqref{2} with the parameter $\alpha$ and starting from 0. 
\end{prop}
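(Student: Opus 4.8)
The plan is to follow the same scheme as in the proof of the preceding proposition for $\mathcal{Y}_{1}$, the only change being that Proposition 2.2 of \cite{siam} is replaced throughout by Proposition 2.3 of \cite{siam}, the balayage-type identity tailored to a piecewise constant parameter $\alpha$. First I would take a continuous process $X=m+v+A$ of the class $\Sigma^{r}(H)$ and recall Identity \eqref{ee} obtained in the previous proof,
$$k^{D}_{g_{t}}X_{t}=\int_{0}^{t}{k^{D}_{s}dm_{s}}+\int_{0}^{t}{k^{D}_{s}dA_{s}}+R_{t},$$
where $R$ is continuous with finite variations, $R_{0}=0$ and $dR$ is carried by $\{t\geq0:X_{t}=0\}$; the $dv$-term disappeared there because $dv$ is carried by $\{D_{t}=0\}=\{Z^{D}_{t}=0\}$. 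Writing $Y_{t}=k^{D}_{g_{\tau_{t}}}X_{\tau_{t}}$ with $\tau_{t}=\inf\{s\geq0:\langle X,X\rangle_{s}>t\}$, the time-changed version of this identity gives $dY_{s}=k^{D}_{\tau_{s}}dm_{\tau_{s}}+k^{D}_{\tau_{s}}dA_{\tau_{s}}+dR_{\tau_{s}}$.

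Next I would apply Proposition 2.3 of \cite{siam} to $Y$ and to the process $Z_{2}$ of \eqref{Zalpha}, obtaining
$$\mathcal{Y}_{2,t}=Z_{2,t}Y_{t}=\int_{0}^{t}{Z_{2,s}dY_{s}}+\int_{0}^{t}{(2\alpha(s)-1)dL_{s}^{0}(\mathcal{Y}_{2,\cdot})}.$$
Since $dA_{\tau_{\cdot}}$ and $dR_{\tau_{\cdot}}$ are carried by $\{t\geq0:X_{\tau_{t}}=0\}$ and
$$\{t\geq0:X_{\tau_{t}}=0\}\subset\{t\geq0:k^{D}_{g_{\tau_{t}}}X_{\tau_{t}}=0\}=\{t\geq0:Z_{2,t}=0\},$$
the $Z_{2}$-integrals against $dA_{\tau_{\cdot}}$ and $dR_{\tau_{\cdot}}$ vanish, leaving
$$\mathcal{Y}_{2,t}=W_{t}+\int_{0}^{t}{(2\alpha(s)-1)dL_{s}^{0}(\mathcal{Y}_{2,\cdot})},$$
where $W_{t}=\int_{0}^{t}{Z_{2,s}k^{D}_{\tau_{s}}dm_{\tau_{s}}}=\int_{0}^{\tau_{t}}{Z_{2,\langle X,X\rangle_{s}}k^{D}_{s}dm_{s}}$ is a continuous local martingale null at $0$.

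To conclude I would show that $W$ is a standard Brownian motion, which, arguing exactly as in the preceding proofs, reduces to checking $\langle W,W\rangle_{t}=t$. For that I would rewrite $Z_{2,t}Y_{t}=k_{2,g^{'}_{t}}Y_{t}$ with $g^{'}_{t}=\sup\{s\leq t:Y_{s}=0\}$ and $k_{2}$ the process of \eqref{Kalpha}, apply the progressive balayage formula once more, and identify the continuous local martingale parts to get $W_{t}=\int_{0}^{\tau_{t}}{k_{2,\langle X,X\rangle_{s}}k^{D}_{s}dm_{s}}$; then
$$\langle W,W\rangle_{t}=\int_{0}^{\tau_{t}}{\left(k_{2,\langle X,X\rangle_{s}}k^{D}_{s}\right)^{2}d\langle m,m\rangle_{s}}=\langle m,m\rangle_{\tau_{t}}=t,$$
since $k_{2}$ and $k^{D}$ take values in $\{-1,1\}$ for $d\langle m,m\rangle$-almost every $s$ and $\langle X,X\rangle=\langle m,m\rangle$ because $v+A$ is of finite variation. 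Hence $\mathcal{Y}_{2,\cdot}$ is a weak solution of \eqref{2} with parameter $\alpha$, started from $0$. I expect the main obstacle to be the identification $W_{t}=\int_{0}^{\tau_{t}}{k_{2,\langle X,X\rangle_{s}}k^{D}_{s}dm_{s}}$: one must justify that the two balayage representations of $Z_{2,\cdot}Y_{\cdot}$ — the one coming from Proposition 2.3 of \cite{siam} applied to $Z_{2}$ and the one coming from the progressive balayage formula applied to $k_{2,g^{'}_{\cdot}}Y_{\cdot}$ — share the same martingale component, and that the time-change by $\tau$ commutes with stochastic integration and with brackets as used above. Everything else is a routine transcription of the argument already carried out for $\mathcal{Y}_{1}$.
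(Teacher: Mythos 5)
Your proposal follows the paper's own proof essentially line by line: you recall Identity \eqref{ee}, apply Proposition 2.3 of \cite{siam} to $Y_{t}=k^{D}_{g_{\tau_{t}}}X_{\tau_{t}}$, discard the $dA_{\tau_{\cdot}}$ and $dR_{\tau_{\cdot}}$ integrals because they are carried by $\{Z_{2,t}=0\}$, identify the martingale part as $\int_{0}^{\tau_{t}}{k_{2,\langle X,X\rangle_{s}}k^{D}_{s}dm_{s}}$ via the progressive balayage formula, and conclude by L\'evy's characterization from $\langle W,W\rangle_{t}=t$. This is correct and is the same argument as in the paper.
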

\begin{proof}
Recall that we have obtained in \eqref{ee}, the following:
$$k^{D}_{g_{t}}X_{t}=\int_{0}^{t}{k^{D}_{s}dm_{s}}+\int_{0}^{t}{k^{D}_{s}dA_{s}}+R_{t}.$$
If we let $Y_{t}=k^{D}_{g_{\tau_{t}}}X_{\tau_{t}}$, we obtain from Proposition 2.3 of \cite{siam}, the following:
$$\mathcal{Y}_{2,t}=Z_{2,t}Y_{t}=\int_{0}^{t}{Z_{2,s}dY_{s}}+\int_{0}^{t}{(2\alpha(s)-1)dL_{s}^{0}(\mathcal{Y}_{2,\cdot})}.$$
But,
$$\int_{0}^{t}{Z_{2,s}dY_{s}}=\int_{0}^{t}{Z_{2,s}k^{D}_{\tau_{s}}dm_{\tau_{s}}}+\int_{0}^{t}{Z_{2,s}k^{D}_{\tau_{s}}dA_{\tau_{s}}}+\int_{0}^{t}{Z_{2,s}dR_{\tau_{s}}}.$$
Hence, we obtain:
$$\int_{0}^{t}{Z_{2,s}dY_{s}}=\int_{0}^{t}{Z_{2,s}k^{D}_{\tau_{s}}dm_{\tau_{s}}}$$
because, $dA_{\tau_{\cdot}}$ and $dR_{\tau_{\cdot}}$ are carried by $\{t\geq0: Z_{2,t}=0\}$. Which implies that
$$\mathcal{Y}_{2,t}=\int_{0}^{t}{Z_{2,s}k^{D}_{\tau_{s}}dm_{\tau_{s}}}+\int_{0}^{t}{(2\alpha(s)-1)dL_{s}^{0}(\mathcal{Y}_{2,\cdot})}.$$
However, the process $W$ defined by $\forall t\geq0$, 
$$W_{t}=\int_{0}^{t}{Z_{2,s}k^{D}_{\tau_{s}}dm_{\tau_{s}}}=\int_{0}^{\tau_{t}}{Z_{2,\langle X,X\rangle_{s}}k^{D}_{s}dm_{s}}$$
is a continuous local martingale. Furthermore, by applying the balayage formula on $k_{2,g^{'}_{t}}Y_{t}$, we obtain by identification that
$$W_{t}=\int_{0}^{\tau_{t}}{k_{2,\langle X,X\rangle_{s}}k^{D}_{s}dm_{s}}$$
since  $Z_{2,t}Y_{t}=k_{2,g^{'}_{t}}Y_{t}$ with $g^{'}_{t}=\sup\{s\leq t:Y_{t}=0\}$. On another hand, we have:
$$\langle W,W\rangle_{t}=\int_{0}^{\tau_{t}}{\left(k_{2,\langle X,X\rangle_{s}}k^{D}_{s}\right)^{2}d\langle m,m\rangle_{s}}=\langle m,m\rangle_{\tau_{t}}=t.$$
Consequently, $W$ is a Brownian motion. This completes the proof.
\end{proof}
%%%%%%%%%%%%%%%%%%%%%%%%%%%%%%%%%%%%%%%%%%%%%%%%%%%%%%%%%%%%%%%%%%%%%%%%%%%%%%%%%%%%%%%%%%%%%%%%%%%%%%%%%%%%%%%%%%%%%%%%%%%%%%%%%%%%%%%%%%%%%%%%%%%%%%%%%%%%%%%%%%%%%%

{\color{myaqua}}
\end{document}